\definecolor{darkred}{rgb}{.7,0,0}
\definecolor{RED}{rgb}{1,0,0}
\newcommand\blue[1]{\textcolor{black}{#1}}
\definecolor{green}{rgb}{0,0.7,0}
\newtheorem{theorem}{Theorem}[section]
\newtheorem{lemma}[theorem]{Lemma}
\newtheorem{corollary}[theorem]{Corollary}
\newtheorem{proposition}[theorem]{Proposition}
\newtheorem{remark}[theorem]{Remark}
\newtheorem{assumption}[theorem]{Assumption}
\newcommand{\throwout}[1]{}
\newcommand{\R}{\mathbb{R}}
\newcommand{\N}{\mathbb{N}}
\newcommand{\IOprod}[1]{\left( #1 \right)_{I\times\Omega}}
\newcommand{\IOpair}[1]{\left\langle #1 \right\rangle_{I\times\Omega}}
\newcommand{\ImOprod}[1]{\left( #1 \right)_{I_m\times\Omega}}
\newcommand{\IjOprod}[2]{\left( #1 \right)_{I_{#2}\times\Omega}}
\newcommand{\Oprod}[1]{\left( #1 \right)_{\Omega}}
\newcommand{\Opair}[1]{\left\langle #1 \right\rangle_{\Omega}}
\newcommand{\chatO}[1]{\hat c\left( #1 \right)}
\newcommand{\chatIO}[1]{\hat c \left(\!\left( #1 \right)\!\right)}
\newcommand{\cO}[1]{c\left( #1 \right)}
\newcommand{\cIO}[1]{c \left(\!\left( #1 \right)\!\right)}
\newcommand{\uukh}{\overline{uu_{kh}}}
\newcommand{\Ukh}{U_{kh}}
\newcommand{\Vkh}{V_{kh}}
\newcommand{\Mkh}{M_{kh}}
\newcommand{\Xkh}{Y_{kh}}
\newcommand{\ukh}{u_{kh}}
\newcommand{\zkh}{z_{kh}}
\newcommand{\ukht}{\tilde u_{kh}}
\newcommand{\pkht}{\tilde p_{kh}}
\newcommand{\vkh}{v_{kh}}
\newcommand{\vk}{v_{k}}
\newcommand{\wkh}{w_{kh}}
\newcommand{\ekh}{e_{kh}}
\newcommand{\pkh}{p_{kh}}
\newcommand{\qkh}{q_{kh}}
\newcommand{\rhokh}{\varrho_{kh}}
\newcommand{\phikh}{\phi_{kh}}
\newcommand{\psikh}{\psi_{kh}}
\newcommand{\Linftwo}{L^\infty(I;L^2(\Omega)^2)}
\newcommand{\LinfLtwo}{L^\infty(I;L^2(\Omega))}
\newcommand{\LinfHone}{L^\infty(I;H^1(\Omega))}
\newcommand{\HmOd}{H^{-1}(\Omega)^2}
\newcommand{\Hone}{H^{1}(\Omega)}
\newcommand{\LtwoHone}{L^2(I;H^1(\Omega))}
\newcommand{\LtwoHoned}{L^2(I;H^1(\Omega)^2)}
\newcommand{\LtwoHtwo}{L^2(I;H^2(\Omega))}
\newcommand{\LtwoLtwo}{L^2(I \times \Omega)}
\newcommand{\LoneLtwo}{L^1(I ;L^2(\Omega))}
\newcommand{\Hozs}{H^1_0(\Omega)^2}
\newcommand{\LtwomLtwo}{L^2(I_m \times \Omega)}
\newcommand{\LtwoV}{L^2(I;V)}
\newcommand{\dualrhs}{g}
\newcommand{\OseenConstant}{K\left(\|\uukh\|_{L^2(I;H^1(\Omega))\cap L^\infty(I;L^2(\Omega))}\right)}
\newcommand{\MathSet}[1]{\left\lbrace \, #1 \, \right\rbrace}
\newcommand{\Ph}{\mathbb P_h}
\DeclareMathOperator{\supp}{supp}
\begin{document}
\author{Boris Vexler}\address{Chair of Optimal Control, Technical University of Munich,
  School of Computation Information and Technology,
Department of Mathematics , Boltzmannstra{\ss}e 3, 85748 Garching b. Munich, Germany
(vexler@tum.de).}
\author{Jakob Wagner}\address{Chair of Optimal Control, Technical University of Munich,
  School of Computation Information and Technology,
Department of Mathematics , Boltzmannstra{\ss}e 3, 85748 Garching b. Munich, Germany
(wagnerja@cit.tum.de) ORCID: \url{https://orcid.org/0000-0001-8510-9790}.}

\begin{abstract}
In this work we consider the two dimensional instationary Navier-Stokes equations
with homogeneous Dirichlet/no-slip boundary conditions.
We show error estimates for the fully discrete problem, where a discontinuous Galerkin method in time
and inf-sup stable finite elements in space are used.
%
%
%
Recently, best approximation type error estimates for the Stokes problem in the $L^\infty(I;L^2(\Omega))$, 
$L^2(I;H^1(\Omega))$ and $L^2(I;L^2(\Omega))$ norms have been shown.
The main result of the present work extends the error estimate in the $L^\infty(I;L^2(\Omega))$ norm 
to the Navier-Stokes equations, by pursuing an error splitting approach and an appropriate duality argument.
In order to discuss the stability of solutions to the discrete primal and dual equations,
a \blue{specially tailored} discrete Gronwall lemma is presented.
The techniques developed towards showing the $L^\infty(I;L^2(\Omega))$
error estimate, also allow us to show best approximation 
type error estimates in the $L^2(I;H^1(\Omega))$ and $L^2(I;L^2(\Omega))$ norms, which complement this work.

\end{abstract}

\keywords{Navier-Stokes, transient, instationary, finite elements, discontinuous Galerkin, error estimates,
best approximation, fully discrete}
\title{Error estimates for finite element discretizations of the
  instationary Navier-Stokes equations}

\subjclass{35Q30, 65M60, 65M15, 65M22, 76D05, 76M10}

\maketitle

\section{Introduction}
In this paper, we consider the instationary Navier-Stokes equations in two space dimensions
with homogeneous boundary conditions, i.e.,
\begin{equation}\label{eq:nav_stokes_classical}
  \left\lbrace
\begin{aligned}
  \partial_t u - \nu \Delta u + (u \cdot \nabla)u + \nabla p & = f && \text{in } I \times \Omega,\\
  \nabla \cdot u & = 0 && \text{in } I \times \Omega,\\
  u(0) & = u_0 && \text{in } \Omega,\\
  u & = 0 && \text{on }I \times \partial \Omega.
\end{aligned}
\right.
\end{equation}
Here $\nu >0$ denotes the viscosity, $I = (0,T] \subset \R$ a bounded, half-open interval for some fixed
finite endtime $T>0$, and $\Omega \subset \R^2$ a bounded convex polygonal domain.
The equations are discretized in time by a
discontinuous Galerkin (dG) method, i.e., the solution is approximated
by piecewise polynomials in time, defined on subintervals of $I$, without any
requirement of continuity at the time nodes, see, e.g.,
\cite{eriksson_time_1985,thomee_galerkin_2006}.
The parameter indicating the time discretization will be denoted by $k$ and corresponds
to the length of the largest subinterval in the partition of $I$.
To discretize in space, we use
inf-sup stable pairs of finite element spaces for the velocity and pressure components.
The parameter indicating the spatial discretization will
be denoted by $h$ and corresponds to the largest diameter of cells in the mesh.
\\
Due to the variational formulation of the dG time discretization, this 
discretization scheme is particularly suited for treating optimal control problems.
See, e.g., \cite{casas_discontinuous_2012,casas_analysis_2016}
for optimal control problems governed by the Navier-Stokes equations, 
also \cite{meidner_priori_2011,leykekhman_numerical_2020}
for optimal control of general parabolic problems.
While in \cite{casas_discontinuous_2012,casas_analysis_2016} the focus was put on low order schemes,
recently the authors of \cite{ahmed_higherorder_2021} analyzed dG schemes of arbitrary order for the 
Navier-Stokes equations.
Another advantage of the dG time discretization is the fact, that the maximal parabolic regularity,
exhibited by parabolic problems,
is preserved on the discrete level, and moreover can be extended to the limiting 
cases $L^1$ and $L^\infty$ in time, at the expense of a logarithmic factor,
see \cite[Theorems 11 \& 12]{leykekhman_discrete_2017}.
The natural energy norm for the Navier-Stokes equations is the norm of the space
$\Linftwo \, \cap \, \LtwoHoned$.
Indeed by formally testing \eqref{eq:nav_stokes_classical} with the solution $u$, one obtains
\begin{equation*}
  \|u\|_{L^\infty(I;L^2(\Omega))} + \|u\|_{L^2(I;H^1(\Omega))} 
  \le C\left(\|u_0\|_{L^2(\Omega)} + \|f\|_{L^1(I;L^2(\Omega))} \right).
\end{equation*}
This bound is also preserved on the discrete level, i.e., holds for the fully discrete solution $\ukh$,
see \Cref{thm:stability_discrete_navier_stokes}.
Our main goal in writing this paper, was the investigation of 
the discretization error in terms of the $L^\infty(I;L^2(\Omega))$-norm.
In particular, such estimates are required for the
analysis of optimal control problems, subject to state constraints pointwise in time,
see \cite{meidner_priori_2011}, as the corresponding Lagrange multiplier in this case is a measure in time.
There have been numerous approaches in the literature,
deriving error estimates for the Navier-Stokes equations.
In \cite[Theorem 4.7]{casas_discontinuous_2012},
the same combination of dG-cG discretization schemes was used,
and for $f \in L^2(I;L^2(\Omega)^2)$ an estimate
\begin{equation}\label{eq:errest_casas}
  \|u - \ukh\|_{L^\infty(I;L^2(\Omega))} \le C ( \sqrt{k} + h)
\end{equation}
was shown. 
Additional terms arise, when changes in the spatial mesh on different time intervals are permitted.
Under the much stricter assumption $f \in W^{1,\infty}(I;L^2(\Omega)^2)$ and for
the implicit Euler time discretization, in 
\cite{heywood_finite_1986} the estimate
\begin{equation}\label{eq:errest_heywood}
  \|u - \ukh\|_{L^\infty(I;L^2(\Omega))} \le C ( k + h^2)
\end{equation}
was shown. It was extended to the Crank-Nicholson scheme in \cite{Heywood1990} under the 
assumption of $f \in W^{2,\infty}(I;L^2(\Omega)^2)$,
yielding an error estimate at the time nodes of order $\mathcal O(k^2+h^2)$.
Beyond the references mentioned above, there are many works analyzing the stationary equations,
  e.g., \cite{girault_finite_1986,schieweck_optimal_1996}, 
  semidiscrete equations in space, e.g., \cite{bernardi_conforming_1985, deckelnick_semidiscretization_2004},
  or semidiscrete equations in time, e.g., \cite{emmrich_error_2004, shen_error_1992,arndt_local_2015}.
  Fully discrete error estimates for stabilized discretization schemes can be found e.g., in
  \cite{ahmed_higherorder_2021,kirk_analysis_2022,de_frutos_error_2019,arndt_quasi_optimal_2016}.
  The error estimates are most often derived assuming the necessary regularity of $u$, such that the 
  discretization schemes can exhibit their full approximative power.
The main drawback of the results in the literature is the fact, that usually the errors in 
$L^\infty(I;L^2(\Omega))$ and $L^2(I;H^1(\Omega))$ are estimated 
in a combined fashion. Thus the estimate always has to account for the spatial error in the $H^1$ norm,
yielding an order reduction for the estimate of the error in the spatial $L^2$ norm.
The goal of this paper is to prove an error estimate which can be formulated as a best approximation
type error estimate, which thus estimates the error in the $L^\infty(I;L^2(\Omega))$ norm in an isolated manner.
Such an estimate for the instationary Stokes equations, has been derived 
recently in \cite{behringer_fully_2022}. More specifically, 
for velocity and pressure fields $(w,r)$, solving the instationary Stokes equations on the continuous level,
and their fully discrete approximations $(w_{kh},r_{kh})$, it holds
\begin{equation*}
  \|w - w_{kh}\|_{L^\infty(I;L^2(\Omega))} \le C 
  \ln \left(\frac{T}{k}\right) \left( \inf_{\blue{\chi_{kh}} \in \Vkh} 
    \|w - \blue{\chi_{kh}}\|_{L^\infty(I;L^2(\Omega))}
  + \|w- R^S_h(w,r)\|_{L^\infty(I;L^2(\Omega))}\right),
\end{equation*}
where $\Vkh$ is the space of discretely divergence free space-time finite element functions.
The operator $R^S_h$ denotes a stationary Stokes projection, see \cite{behringer_fully_2022} or 
\Cref{sec:ErrEst} for a formal definition.
Using this Stokes result, we will show that a best approximation type
result also holds for the nonlinear Navier-Stokes equations.
This main result is stated in \Cref{thm:nav_stokes_bestapprox}.
In \Cref{corr:err_est_nav_stokes_spec_orders}, for $f \in L^\infty(I;L^2(\Omega)^2)$,
we then obtain an estimate in terms of $\mathcal O(l_{k} (k + h^2))$, where
$l_{k}$ denotes a logarithmic term depending on $k$.
This result provides a better order of convergence compared to the estimate \eqref{eq:errest_casas},
shown in \cite{casas_discontinuous_2012}.
The order of convergence in \blue{the estimate} \eqref{eq:errest_heywood}, \blue{that was presented in}
\cite{heywood_finite_1986} is comparable,
but requires a much stronger regularity assumption.
The main tools used in this paper for proving the proposed
$\LinfLtwo$ error estimate are an error splitting approach
and a bootstrapping argument, to apply the corresponding error estimate for the Stokes equations
to the first part of the error. In order to apply such an argument, understanding the precise 
regularity of the occuring nonlinear term $(u \cdot \nabla) u$ is crucial. 
The second part of the error will be estimated by a duality argument. This is possible due to the 
variational nature of the dG time discretization. We will derive a stability 
result for a discrete dual equation. For this result, a specially adapted version of a 
discrete Gronwall lemma, \Cref{thm:discrete_gronwall_quadlinear}, will be presented.
For the analysis of the discrete dual problem, we require an estimate in the $L^2(I;H^1(\Omega))$ norm,
which is why we formulate this result first. We shall then show the error estimate in 
\blue{$L^\infty(I;L^2(\Omega))$}
with the tools presented above. The proof of the error estimate in the $L^2(I;L^2(\Omega))$ norm 
is then straightforward and concludes our work. Summarizing, our main results read
\begin{align*}
  \blue{\|u-\ukh\|_{L^2(I;H^1(\Omega))}} &\le C \left(\sqrt{k}+h\right) && 
  \text{if } f \in L^2(I;L^2(\Omega)^2) &&\hspace{-12mm}\text{ and } u_0 \in V,\\
  \|u-\ukh\|_{L^\infty(I;L^2(\Omega))} &\le C \ln(T/k)^2\left(k+h^2\right) && 
  \text{if } f \in L^\infty(I;L^2(\Omega)^2) &&\hspace{-12mm}\text{ and } u_0 \in V\cap H^2(\Omega)^2,\\
  \|u-\ukh\|_{L^2(I;L^2(\Omega))} &\le C \left(k+h^2\right) && 
  \text{if } f \in L^2(I;L^2(\Omega)^2) &&\hspace{-12mm}\text{ and } u_0 \in V,
\end{align*}
and can be found in \Cref{thm:nav_stokes_l2h1,thm:nav_stokes_bestapprox,thm:nav_stokes_l2l2} and
\Cref{corr:err_est_nav_stokes_spec_orders,corr:l2h1_orders}.
All three results are, up to logarithmic terms, optimal in terms of order of convergence and in terms 
of required regularity.
The structure of this paper will be as follows.
First, we fix some notation and function spaces in \Cref{sec:preliminary}.
We proceed in \Cref{sec:NSE} by stating the appropriate weak formulations of
\eqref{eq:nav_stokes_classical}
with and without pressure and recall some known regularity results.
We conclude the section with an analysis of the regularity properties of 
the nonlinear term $(u \cdot \nabla) u$.
\Cref{sec:Discretization} will be devoted to the spatial and temporal discretizations.
\blue{We present a version of a discrete Gronwall lemma, which allows us to treat problems with
right hand sides that are only $L^1$ in time, and show stability results for fully discrete primal equations.
With this result, we derive an error estimate for the Navier-Stokes equations in $L^2(I;H^1(\Omega))$.}
Lastly, in \Cref{sec:ErrEst},
\blue{we show stability results for a discrete dual problem,}
which in the end allows us to show the error estimates in 
the $L^\infty(I;L^2(\Omega))$ and $L^2(I;L^2(\Omega))$ norms.

\section{Preliminary}\label{sec:preliminary}
For a \blue{convex,} polygonal domain $\Omega \subset \R^2$, $1 \le p\le \infty$ and $k \in \mathbb N$,
we denote by $L^p(\Omega)$, $W^{k,p}(\Omega)$, $H^k(\Omega)$ and $H^1_0(\Omega)$
the usual Lebesgue and Sobolev spaces.
The inner product on $L^2(\Omega)$ will be denoted by $\Oprod{\cdot , \cdot}$.
The space $L^2_0(\Omega)$ is the subspace of $L^2(\Omega)$, consisting of all functions,
that have zero mean.
For $s \in \R\backslash \N$, $s>0$ the fractional order Sobolev(-Slobodeckij)
space $W^{s,p}(\Omega)$ is defined, see, e.g., \cite{demengel_functional_2012}, as
\begin{equation*}
  W^{s,p}(\Omega) := \MathSet{ v \in W^{\lfloor s\rfloor,p}(\Omega) : 
  \sum_{|\alpha|=\lfloor s \rfloor} \iint_{\Omega \times \Omega} 
  \dfrac{|D^\alpha v(x) - D^\alpha v(y)|^p}{|x-y|^{(s-\lfloor s\rfloor)p+2}}
  dx dy < + \infty}.
\end{equation*}
In case $p=2$ we again use the notation $H^s(\Omega)$. Note that in this case $H^s(\Omega)$ 
can equivalently be obtained via real or complex
interpolation of the integer degree spaces $H^k(\Omega)$.
This is due to the fact, that in the Hilbert space setting, all resulting Bessel potential spaces
$H^s_2(\Omega)$, Besov spaces $B^s_{2,2}(\Omega)$ and Sobolev-(Slobodeckij) spaces $H^s(\Omega)$ coincide,
see \cite[pp. 12,39]{triebel_theory_1992}.
For $X$ being any function space over $\Omega$, we denote by $X^*$ its topological dual space,
and abbreviate the duality pairing by $\Opair{\cdot , \cdot}$. We will also use the notation
$H^1_0(\Omega)^* = H^{-1}(\Omega)$.
The structure of the Stokes and Navier-Stokes equations requires also the definition of 
some vector valued spaces, consisting of divergence free vector fields.
We denote by $\nabla \cdot$ the divergence operator and introduce the spaces
\begin{equation*}
  V:= \overline{\MathSet{ v \in C^\infty_0(\Omega)^2:
  \ \nabla \cdot v = 0 }}^{H^1(\Omega)}
  \quad \text{and} \quad
  H:= \overline{ \MathSet{ v \in C^\infty_0(\Omega)^2 :
  \ \nabla \cdot v = 0 }}^{L^2(\Omega)}.
\end{equation*}
Note that instead of the definition via closures, in the case of $\Omega$ being bounded and Lipschitz,
these spaces are alternatively characterized in the following way,
see \cite[Chapter 1, Theorems 1.4 \& 1.6]{Temam1977}:
\begin{equation*}
  V = \MathSet{ v \in H^1_0(\Omega)^2 : \ \nabla \cdot v = 0 }
  \quad \text{and} \quad
  H = \MathSet{ v \in L^2(\Omega)^2: \ \nabla \cdot v= 0, \ u\cdot n = 0 \text{ on } \partial \Omega },
\end{equation*}
where by $u \cdot n$ we denote the normal trace of the vector field $u$.
To improve readability, whenever vectorial spaces like $H^1(\Omega)^2$ would arise in the subscript of some
norm, we shall drop the outer superscript $(\cdot)^2$.
For a Banach space $X$ and $I = (0,T]$
we denote by $L^p(I;X)$ the Bochner space of $X$ valued functions, for which the following norm is finite
\begin{equation*}
  \|v\|_{L^p(I;X)} = \left( \int_I \|v(t)\|_X^p \ dt  \right)^{1/p},
\end{equation*}
with the usual convention when $p= + \infty$.
It holds $L^p(I;L^p(\Omega)) \cong L^p(I\times \Omega)$, and for $p=2$,
we denote the inner product by $\IOprod{\cdot , \cdot}$.
Whenever $X$ is separable and $1 \le p < \infty$, it holds $(L^p(I;X))^* \cong L^{p^*}(I;X^*)$,
where $1/p + 1/p^* = 1$. The duality pairing for such spaces will be denoted by 
$\IOpair{\cdot , \cdot}$.
By $W^{k,p}(I;X)$ and $H^k(I;X)$ for $k \in \N$ we denote the spaces of functions $v$ 
satisfying $\partial_t^j v \in L^p(I;X)$, $j=0,...,k$.

\section{Navier-Stokes equations}\label{sec:NSE}
We start by recalling some regularity results for the Navier-Stokes equations,
and \blue{we} are going to prove some additional results, especially adapted to the situation considered in
this paper.
Throughout this paper, we shall always assume the convexity of $\Omega$. We will state explicitly, whenever
results also hold in a more general setting.
It is a well known result, that for $f \in L^2(I;V^*)+L^1(I;L^2(\Omega)^2)$
and $u_0 \in H$, there exists a unique weak solution,
i.e., the following Proposition holds, see \cite[Chapter 3, Theorem 3.1 \& Remark 3.1]{Temam1977}:
\begin{proposition}\label{prop:navier_stokes_solvability}
  Let
  $f \in L^2(I;V^*) + L^1(I;L^2(\Omega)^2)$ and $u_0 \in H$. Then there exists a unique weak solution 
  $u \in L^2(I;V)\cap C(\bar I;L^2(\Omega)^2)$ of \eqref{eq:nav_stokes_classical}, satisfying
  \begin{equation}\label{eq:nav_stokes_weak}
    \Opair{ \partial_t u,v} + \nu \Oprod{\nabla u, \nabla v} + \Oprod{(u\cdot \nabla)u,v} 
    = \Oprod{f,v} 
    \qquad \text{for all } v \in V
  \end{equation}
  in the sense of distributions on $I$,
  and $u(0) = u_0$. Moreover, there holds an estimate
  \begin{equation*}
    \|u\|_{\LinfLtwo} + \blue{\sqrt{\nu}} \|u\|_{\LtwoV} 
    \le C \left(\|u_0\|_H + \|f\|_{L^2(I;V^*) + L^1(I;L^2(\Omega))}\right).
  \end{equation*}
\end{proposition}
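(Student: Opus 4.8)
The plan is to prove this by the classical Faedo--Galerkin method, following \cite{Temam1977}; I sketch the structure and indicate where the two-dimensional hypothesis enters. First I would fix a Hilbert basis $\{\varphi_j\}_{j\ge 1}$ of $V$ --- most conveniently the eigenfunctions of the Stokes operator, which are simultaneously orthogonal in $H$ --- and seek Galerkin approximations $u_m(t) = \sum_{j=1}^m g_{jm}(t)\varphi_j$ solving the finite-dimensional system obtained by testing \eqref{eq:nav_stokes_weak} against $\varphi_1,\dots,\varphi_m$, with $u_m(0)$ the $H$-orthogonal projection of $u_0$ onto the span of $\varphi_1,\dots,\varphi_m$. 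Since the convective term is polynomial in the coefficients $g_{jm}$, Carath\'eodory's theorem yields a local-in-time solution of this ODE system.

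The second step is the a priori energy estimate. Testing with $u_m$ itself, the convective term vanishes because $\div u_m = 0$ and $u_m$ vanishes on $\partial\Omega$; splitting $f = f_1 + f_2$ with $f_1 \in L^2(I;V^*)$ and $f_2 \in L^1(I;L^2(\Omega)^2)$, bounding the $V^*$--$V$ pairing with the help of $\|u_m\|_V \lesssim \|\nabla u_m\|_{L^2(\Omega)}$, and using Young's inequality, one is led to a differential inequality of the form $\tfrac{d}{dt}\|u_m(t)\|_{L^2(\Omega)}^2 + \nu\|\nabla u_m(t)\|_{L^2(\Omega)}^2 \le C_\nu\|f_1(t)\|_{V^*}^2 + 2\|f_2(t)\|_{L^2(\Omega)}\|u_m(t)\|_{L^2(\Omega)}$. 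Integrating in time and absorbing the $L^1$-in-time contribution through the Gronwall argument appropriate for inequalities of the type $\tfrac{d}{dt}y \le a + b\sqrt{y}$ gives the asserted bound with $u$ replaced by $u_m$, uniformly in $m$; in particular the Galerkin solutions extend to all of $\bar I$.

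Third, I would use this bound to extract a convergent subsequence. In two space dimensions Ladyzhenskaya's inequality $\|v\|_{L^4(\Omega)} \le C\|v\|_{L^2(\Omega)}^{1/2}\|v\|_{H^1(\Omega)}^{1/2}$ bounds $(u_m\cdot\nabla)u_m$ in $L^2(I;V^*)$, so that $\partial_t u_m$ is bounded in $L^2(I;V^*) + L^1(I;L^2(\Omega)^2)$; the Aubin--Lions--Simon compactness lemma, using that $V$ embeds compactly into $H$, then furnishes a subsequence converging strongly in $L^2(I;H)$ and weakly in $\LtwoV$ to a limit $u$. The linear terms pass to the limit at once by weak convergence; for the convective term the trick is to integrate by parts, writing $\int_I \Oprod{(u_m\cdot\nabla)u_m, \varphi_j}\psi\,dt = -\int_I \Oprod{(u_m\cdot\nabla)\varphi_j, u_m}\psi\,dt$ for $\psi \in C_c^\infty(I)$, and to exploit the strong $L^2(I\times\Omega)$ convergence of $u_m$ against the bounded factor $\nabla\varphi_j$. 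This identifies $u$ as a solution of \eqref{eq:nav_stokes_weak} tested against each $\varphi_j$, and hence, by density, against all of $V$; the regularity $u \in C(\bar I;L^2(\Omega)^2)$ with $u(0) = u_0$ then follows from $u \in L^2(I;V)$, $\partial_t u \in L^2(I;V^*) + L^1(I;L^2(\Omega)^2)$ and the Lions--Magenes lemma, while the energy estimate for $u$ is inherited from that of $u_m$ by weak lower semicontinuity of the norms.

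For uniqueness --- the point at which $d = 2$ is genuinely used --- I would take two solutions $u, \tilde u$ sharing the same data, set $w = u - \tilde u$, so that $w(0) = 0$ and $\Opair{\partial_t w, v} + \nu\Oprod{\nabla w, \nabla v} + \Oprod{(w\cdot\nabla)u, v} + \Oprod{(\tilde u\cdot\nabla)w, v} = 0$ for all $v \in V$, and test with $w$: the last trilinear term drops, and the remaining one is controlled by $|\Oprod{(w\cdot\nabla)u, w}| \le \|w\|_{L^4(\Omega)}^2 \|\nabla u\|_{L^2(\Omega)} \le C\|w\|_{L^2(\Omega)}\|\nabla w\|_{L^2(\Omega)}\|\nabla u\|_{L^2(\Omega)}$, which after Young's inequality yields $\tfrac{d}{dt}\|w(t)\|_{L^2(\Omega)}^2 \le \tfrac{C}{\nu}\|\nabla u(t)\|_{L^2(\Omega)}^2\|w(t)\|_{L^2(\Omega)}^2$ with $\|\nabla u\|_{L^2(\Omega)}^2 \in L^1(I)$, whence $w \equiv 0$ by Gronwall's inequality. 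I expect the main obstacles to be the passage to the limit in the convective term, which forces the compactness argument rather than mere weak convergence, and the uniqueness step; both rest on the two-dimensional interpolation inequality $\|v\|_{L^4} \le C\|v\|_{L^2}^{1/2}\|v\|_{H^1}^{1/2}$, which fails in three dimensions, where uniqueness of weak solutions is open.
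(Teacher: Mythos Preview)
Your sketch is correct and is precisely the classical Faedo--Galerkin argument from \cite[Chapter 3, Theorem 3.1]{Temam1977}, which is all the paper invokes: the paper does not give its own proof of this proposition but simply cites Temam (and \cite{Sohr2001} for the $T$-independence of the constant). Your handling of the mixed right-hand side $f = f_1 + f_2 \in L^2(I;V^*) + L^1(I;L^2(\Omega)^2)$ via a Bihari-type inequality is the right idea and, as you will see later in the paper, is exactly the mechanism formalised in the discrete setting by \Cref{thm:discrete_gronwall_quadlinear}; one minor point worth making explicit is that this argument yields a constant independent of $T$, which the paper highlights separately.
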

Note that the constant $C$ above only depends on $\nu$ and $\Omega$, but is independent of $T$,
see \cite[Theorems V.1.4.2, V.1.5.3, V.3.1.1]{Sohr2001}.
It is well known, that under the assumptions of \Cref{prop:navier_stokes_solvability}, the 
nonlinearity satisfies for $1 \le s,q < 2$:
\begin{equation}\label{eq:nonlinearity_regularity}
  (u\cdot \nabla) u \in L^s(I;L^q(\Omega)^2) \quad \text{whenever} \quad
  \frac{1}{s} + \frac{1}{q} \ge \frac{3}{2},
\end{equation}
see \cite[Lemma V.1.2.1]{Sohr2001}.
\Cref{eq:nav_stokes_weak} is the weak formulation of \eqref{eq:nav_stokes_classical} in divergence free spaces.
The proof of the above proposition relies heavily on the fact, that the trilinear form $c(\cdot,\cdot,\cdot)$
defined by
\begin{equation*}
  c: \blue{\Hozs \times \Hozs \times \Hozs}  \rightarrow \R, \quad \cO{u,v,w} = \Oprod{(u\cdot \nabla)v,w},
\end{equation*}
posesses the properties summarized in the following lemma.
\begin{lemma}\label{lemm:c_swap}
  Let $\Omega \subset \R^2$ be an open Lipschitz domain, then there holds the estimate
  \begin{align}
    \|v\|_{L^4(\Omega)} &\le C\|v\|_{L^2(\Omega)}^{\frac{1}{2}}\|v\|_{H^1(\Omega)}^{\frac{1}{2}}
    \qquad \text{for all } v \in H^1(\Omega),
    \label{eq:l4_interpol_general}
  \end{align}
  due to which, the trilinear form $\cO{\cdot,\cdot,\cdot}$ satisfies for all $u,v,w \in H^1_0(\Omega)^2$:
  \begin{align*}
    \cO{u,v,w} & \le \|u\|_{L^4(\Omega)} \|\nabla v\|_{L^2(\Omega)} \|w\|_{L^4(\Omega)},\\
    \cO{u,v,u} & \le C\|u\|_{L^2(\Omega)} \|\nabla u\|_{L^2(\Omega)}\|\nabla v\|_{L^2(\Omega)}.
  \end{align*}
  Let further $\nabla \cdot u = 0$. Then it holds
  \begin{equation*}
    \cO{u,v,w} = - \cO{u,w,v} \quad \text{and} \quad 
    \cO{u,v,v} = 0.
  \end{equation*}
\end{lemma}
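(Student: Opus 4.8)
The plan is to establish the three ingredients in the order they depend on one another: first the Ladyzhenskaya-type interpolation inequality \eqref{eq:l4_interpol_general}, then the two pointwise estimates on $c$, and finally the antisymmetry/vanishing identities which require the divergence-free assumption. For \eqref{eq:l4_interpol_general} I would recall the standard two-dimensional Gagliardo--Nirenberg inequality: for $v\in C_0^\infty(\R^2)$ one has $\|v\|_{L^4(\R^2)}^2\le C\|v\|_{L^2(\R^2)}\|\nabla v\|_{L^2(\R^2)}$, proved by writing $v(x_1,x_2)^2$ as an integral of $\partial_1(v^2)$ in the first variable and of $\partial_2(v^2)$ in the second, applying Cauchy--Schwarz, and integrating the product over $\R^2$. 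On a bounded Lipschitz $\Omega$ one then uses a bounded extension operator $E\colon H^1(\Omega)\to H^1(\R^2)$, which gives $\|v\|_{L^4(\Omega)}\le\|Ev\|_{L^4(\R^2)}\le C\|Ev\|_{L^2(\R^2)}^{1/2}\|Ev\|_{H^1(\R^2)}^{1/2}\le C\|v\|_{L^2(\Omega)}^{1/2}\|v\|_{H^1(\Omega)}^{1/2}$; alternatively one may cite this directly from a standard reference. (If one wants the sharper form with $\|\nabla v\|_{L^2}$ rather than $\|v\|_{H^1}$ on the right, that holds for $v\in H_0^1(\Omega)$ by Poincaré, but the stated version with the full $H^1$ norm is all that is claimed and all that is needed.)

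For the first estimate on $c$, I would simply apply Hölder's inequality with exponents $4,2,4$ to $\cO{u,v,w}=\int_\Omega (u\cdot\nabla)v\cdot w\,dx$, bounding $\sum_{i,j}\int_\Omega u_i(\partial_i v_j)w_j\,dx$ by $\|u\|_{L^4(\Omega)}\|\nabla v\|_{L^2(\Omega)}\|w\|_{L^4(\Omega)}$ (componentwise Hölder, then Cauchy--Schwarz over the finitely many index pairs, absorbed into the constant). For the second estimate, set $w=u$ in the first one and then insert \eqref{eq:l4_interpol_general} for the two $L^4$ factors: $\cO{u,v,u}\le\|u\|_{L^4(\Omega)}^2\|\nabla v\|_{L^2(\Omega)}\le C\|u\|_{L^2(\Omega)}\|u\|_{H^1(\Omega)}\|\nabla v\|_{L^2(\Omega)}$, and for $u\in H_0^1(\Omega)^2$ one replaces $\|u\|_{H^1(\Omega)}$ by $\|\nabla u\|_{L^2(\Omega)}$ via Poincaré, matching the claimed bound.

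For the algebraic identities, assume $\nabla\cdot u=0$. Formally, integrating by parts in $\cO{u,v,w}=\sum_{i,j}\int_\Omega u_i(\partial_i v_j)w_j\,dx$ moves the derivative off $v_j$: the boundary term vanishes because $v,w\in H_0^1(\Omega)^2$, the term with $\partial_i u_i$ vanishes because $\div u=0$, and what remains is $-\sum_{i,j}\int_\Omega u_i v_j(\partial_i w_j)\,dx=-\cO{u,w,v}$. Taking $v=w$ gives $\cO{u,v,v}=-\cO{u,v,v}$, hence $\cO{u,v,v}=0$. To make this rigorous for general $H_0^1$ arguments rather than smooth ones, I would first prove it for $u,v,w\in C_0^\infty(\Omega)^2$ with $\div u=0$ and then pass to the limit using the density of such triples and the continuity of $c$ on $H_0^1(\Omega)^2{}^{\times 3}$ that follows from the first estimate together with the Sobolev embedding $H^1(\Omega)\hookrightarrow L^4(\Omega)$ in two dimensions. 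The main (minor) obstacle is exactly this density/approximation step: one must either invoke that $\{v\in C_0^\infty(\Omega)^2:\div v=0\}$ is dense in $V$ (which is the definition of $V$ used here) for the first argument, and ordinary density of $C_0^\infty(\Omega)^2$ in $H_0^1(\Omega)^2$ for the other two, and check that $c$ is jointly continuous so that the identity survives the limit; everything else is routine Hölder and integration by parts. Since all of this is classical, I would keep the write-up short and point to \cite{Temam1977} or \cite{Sohr2001} for the parts that are entirely standard.
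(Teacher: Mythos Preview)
Your proposal is correct and follows essentially the same approach as the paper: the paper's own proof simply cites \cite[Lemma II.3.2]{galdi_introduction_2011} and \cite[Theorem 3]{adams_cone_1977} for \eqref{eq:l4_interpol_general} and then says the trilinear-form properties are consequences of H\"older's inequality and integration by parts, pointing to \cite[Lemma IX.2.1]{galdi_introduction_2011}. You have filled in exactly these arguments (Gagliardo--Nirenberg plus extension for the interpolation inequality, H\"older with exponents $4,2,4$, and integration by parts with a density step), so there is nothing to correct or compare.
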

\begin{proof}
  The estimate for the $L^4(\Omega)$ norm can be found in \cite[Lemma II.3.2]{galdi_introduction_2011}
  and \cite[Theorem 3]{adams_cone_1977}.
  The properties of the trilinear forms are then consequences of \blue{Hölder's} inequality and
  integration by parts, and are shown, e.g., in \cite[Lemma IX.2.1]{galdi_introduction_2011}. 
\end{proof}
In what follows, we often consider the trilinear form $c$ integrated in time, which we denote by
\begin{equation*}
    \cIO{u,v,w} := \int_I \cO{u,v,w} \ dt.
\end{equation*}
In analyzing the Navier-Stokes equations, 
the instationary Stokes equations frequently arise as an auxiliary problem.
For initial data $u_0 \in H$ and right hand side $f\in L^1(I;L^2(\Omega)^2)\blue{+L^2(I;V^*)}$,
there exists a unique solution $w \in L^2(I;V) \cap L^\infty(I;L^2(\Omega)^2)$ to the Stokes equations
\begin{equation}\label{eq:stokes_weak}
  \left\lbrace
  \begin{aligned}
    \Opair{\partial_t w,v} + \nu \Oprod{\nabla w, \nabla v} &= \Oprod{f,v}
  \qquad \text{for all } v \in V,\\
  w(0) &= u_0,
  \end{aligned}
  \right.
\end{equation}
where the first line of \eqref{eq:stokes_weak} is understood in the sense of distributions on $I$.
We introduce the Stokes operator $A\colon D(A) \to H$ defined by 
\begin{equation*}
  \Oprod{A w,v} = \nu \Oprod{\nabla w,\nabla v} \qquad \text{for all } v \in V,
\end{equation*}
with domain $D(A) := \MathSet{v \in V : \Delta v \in L^2(\Omega)^2}$ and the
projection operator \mbox{$\mathbb P:L^2(\Omega)^2 \to H$}, defined by
\begin{equation*}
  \Oprod{\mathbb P w,v} = \Oprod{w,v} \qquad \text{for all } v \in H,
\end{equation*}
which is called the Helmholtz or Leray projection. 
Note that with the vector-valued Laplacian
\begin{equation*}
  - \Delta\colon D(\Delta) \to L^2(\Omega)^2,
\end{equation*}
with domain $D(\Delta) := \MathSet{v \in H^1_0(\Omega)^2 : \Delta v \in L^2(\Omega)^2}$, the Stokes operator
also satisfies the representation
\begin{equation*}
  A = - \mathbb P \Delta.
\end{equation*}
For convex $\Omega$, the domains of the operators introduced above satisfy the representations
\begin{equation*}
  D(\Delta) = H^1_0(\Omega)^2 \cap H^2(\Omega)^2 \qquad \text{and}\qquad
  D(A) = V \cap H^2(\Omega)^2,
\end{equation*}
see \cite{Dauge1989} for the $H^2$ regularity of the Stokes operator.
The Stokes operator $A$ generates an analytic semigroup in $H$,
see \cite{behringer_fully_2022,lunardi_analytic_2012}, also
\cite{ashyralyev_well_posedness_1994} for a detailled general analysis.
One important feature of the Stokes problem is the
maximal parabolic regularity, which
indicates, that both the time derivative $\partial_t w$ and the Stokes operator
$A w$ individually inherit certain regularity properties of $f$, see \Cref{prop:max_par_reg} below. 
For homogeneous initial data, this consequence of the analyticity of the semigroup has been shown in
\cite{de_simon_applicazione_1964}, see also \cite[Chapter IV, Theorem 1.6.3]{Sohr2001}. Since our analysis
should also treat inhomogeneous initial data $u_0$, we need to define the proper spaces for the initial
data: 
\begin{equation*}
  V_{1-1/s} := \MathSet{ v \in H : \ \|v\|_{V_{1-1/s}} < \infty },
\end{equation*}
see also \cite{behringer_fully_2022} and \cite[Chapter 1, Section 3.3]{ashyralyev_well_posedness_1994},
where
\begin{equation*}
  \|v\|_{V_{1-1/s}} := \left( \int_I \|A \exp(-t A) v\|_H^s \ dt \right)^{1/s} + \|v\|_H.
\end{equation*}
\begin{remark}\label{rem:initial_data_spaces}
  Instead of using the spaces $V_{1-1/s}$ explicitly as a requirement for the initial data, we can
  make use of the following imbedding results:
  For $1 < s \le 2$ it holds $V \hookrightarrow V_{1-1/s}$ and for $1 < s < \infty$,
  it holds $V \cap D(\Delta) \hookrightarrow V_{1-1/s}$, see \cite[Remarks 2.8, 2.9]{behringer_fully_2022}.
\end{remark}
The Stokes problem exhibits the following regularity properties, 
see \cite[Proposition 2.6]{behringer_fully_2022}.
\begin{proposition}[Maximal parabolic regularity]\label{prop:max_par_reg}
  Let $1 < s < \infty$, $f \in L^s(I;L^2(\Omega)^2)$ and $u_0 \in V_{1-1/s}$.
  Then the solution $w$ of the Stokes equations \eqref{eq:stokes_weak} satisfies
  \begin{equation*}
    \|\partial_t w\|_{L^s(I;L^2(\Omega))} + \|A w\|_{L^s(I;L^2(\Omega))} \le 
    C\left(\|f\|_{L^s(I;L^2(\Omega))} + \|u_0\|_{V_{1-1/s}}\right).
  \end{equation*}
\end{proposition}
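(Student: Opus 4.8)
The plan is to reduce the claim to the classical maximal $L^s$-regularity estimate for the analytic semigroup generated by the Stokes operator $A$, treating the inhomogeneous initial datum separately by an explicit splitting.

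First I would decompose $w = w_1 + w_2$, where $w_1$ solves \eqref{eq:stokes_weak} with right-hand side $f=0$ and initial value $u_0$, and $w_2$ solves \eqref{eq:stokes_weak} with right-hand side $\mathbb P f$ and zero initial value. By linearity and the uniqueness of weak solutions asserted before \eqref{eq:stokes_weak} this is well defined, and since $\mathbb P$ is the $L^2$-orthogonal projection onto $H$ we have $\|\mathbb P f\|_{L^s(I;L^2(\Omega))} \le \|f\|_{L^s(I;L^2(\Omega))}$. Because $A$ generates a bounded analytic semigroup on $H$, the two pieces admit the representations $w_1(t) = \exp(-tA)u_0$ and $w_2(t) = \int_0^t \exp(-(t-\tau)A)\mathbb P f(\tau)\,d\tau$; one checks in the usual way — testing with $v \in V$, using $A\exp(-tA) \in \mathcal L(H)$ for $t>0$ together with dominated convergence — that these indeed satisfy the weak formulation, so they coincide with $w_1$ and $w_2$.

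For the initial-data part, $Aw_1(t) = A\exp(-tA)u_0$, so the bound
\[
  \|Aw_1\|_{L^s(I;L^2(\Omega))} = \Bigl(\int_I \|A\exp(-tA)u_0\|_H^s\,dt\Bigr)^{1/s} \le \|u_0\|_{V_{1-1/s}}
\]
is nothing but the definition of the norm on $V_{1-1/s}$, and since $\partial_t w_1 = -Aw_1$ the same estimate holds for $\partial_t w_1$. For the forcing part I would invoke the classical result of de Simon on maximal $L^s$-regularity of analytic semigroups on Hilbert spaces (equivalently the operator-valued Mikhlin multiplier theorem, or \cite[Chapter IV, Theorem 1.6.3]{Sohr2001}), which gives
\[
  \|\partial_t w_2\|_{L^s(I;H)} + \|Aw_2\|_{L^s(I;H)} \le C\,\|\mathbb P f\|_{L^s(I;H)},
\]
with $C$ depending only on $s$, $\nu$ and $\Omega$. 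To see that $C$ is independent of $T$, extend $f$ by zero to $(0,\infty)$, solve on the half-line and restrict to $I$, which leaves all norms on the right untouched. Adding the two estimates, using $\partial_t w = \partial_t w_1 + \partial_t w_2$, $Aw = Aw_1 + Aw_2$ and $\|\cdot\|_H = \|\cdot\|_{L^2(\Omega)}$ on $H \subset L^2(\Omega)^2$, yields the assertion.

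The genuinely hard analytic input is de Simon's theorem, which I take as known; within the scope of this paper the remaining work is purely bookkeeping — verifying that the semigroup formulas solve the weak problem, and observing that the initial-data contribution is controlled exactly by the $V_{1-1/s}$-norm. The only point needing a little care is the $T$-independence of the constant, for which the zero-extension argument above is the standard device.
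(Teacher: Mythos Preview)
Your argument is correct and is precisely the standard route: split off the homogeneous evolution $w_1=\exp(-tA)u_0$, read off the $L^s$-bound on $Aw_1$ directly from the definition of $\|\cdot\|_{V_{1-1/s}}$, and invoke de~Simon's maximal regularity for the zero-initial-data piece $w_2$. There is nothing to compare here, because the paper does not supply its own proof of this proposition; it simply quotes the result from \cite[Proposition~2.6]{behringer_fully_2022} (where essentially the same splitting is used). Your write-up is in fact more detailed than what the paper provides.
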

\blue{In the setting of \Cref{prop:max_par_reg}, the Stokes problem \eqref{eq:stokes_weak} formulated
  in divergence free spaces is equivalent to the following velocity-pressure formulation:
Find $(w,r) \in \left[L^2(I;H^1_0(\Omega)^2)\cap C(\bar I;L^2(\Omega)^2)\right] \times L^s(I;L^2_0(\Omega))$
satisfying
\begin{equation}\label{eq:stokes_weak_pressure}
  \IOpair{\partial_t w,v} + \nu \IOprod{\nabla w,\nabla v} - \IOprod{r,\nabla \cdot v}
  + \IOprod{q,\nabla \cdot w} = \IOprod{f,v}
\end{equation}
for all 
$(v,q) \in \left[L^2(I;H^1_0(\Omega)^2)\cap L^\infty(I;L^2(\Omega)^2)\right] \times L^2(I;L^2_0(\Omega))$ 
and $w(0) = u_0$, see \cite[Theorem 2.10]{behringer_fully_2022}.
Furthermore, the maximal parabolic regularity results of \Cref{prop:max_par_reg} imply the following 
estimate for the pressure:
\begin{equation*}
  \|r\|_{L^s(I;L^2(\Omega))} \le C \left( \|f\|_{L^s(I;L^2(\Omega))} + \|u_0\|_{V_{1-1/s}} \right).
\end{equation*}
}
\blue{Without additional smoothness assumptions, the maximal parabolic regularity for the Stokes problem
  does not immediately extend to the Navier-Stokes equations. A partial result can be obtained by 
  considering that the maximal parabolic regularity of the 
  Stokes operator was recently extended to the $L^p(\Omega)$ setting, where $p$ in general depends
  on the smoothness of $\Omega$. For a general Lipschitz domain $\Omega$, \cite[Theorem 1.6]{gabel_stokes_2022}
  shows that for some $\varepsilon > 0$ and any $p$, such that $|1/p - 1/2| < 1/4 + \varepsilon$,
  the maximal parabolic regularity holds. With this, we obtain the following result for the Navier-Stokes
  equations.
  \begin{theorem}\label{thm:exist_pressure}
    Let $f \in L^s(I;L^2(\Omega)^2)$ and $u_0 \in V_{1-1/s}$ for some $s>1$ and 
    $u \in L^2(I;V)\cap L^\infty(I;L^2(\Omega)^2)$ the unique solution of \eqref{eq:nav_stokes_weak}.
    Then for $\gamma := \min \{s,4/3\}$ it holds
    \begin{equation*}
      \partial_t u \ , \ A u \in L^\gamma(I;L^{4/3}(\Omega)^2).
    \end{equation*}
    Further, there exists a unique $p \in L^\gamma(I;L^2_0(\Omega))$,  such that
  \begin{equation*}
    \Opair{ \partial_t u,v} + \nu\Oprod{\nabla u, \nabla v} + \Oprod{(u\cdot \nabla)u,v} 
    - \Oprod{p,\nabla \cdot v} + \Oprod{\nabla \cdot u,q}
    = \Oprod{f,v} 
  \end{equation*}
  for all $v \in H^1_0(\Omega)^2, q \in L^2_0(\Omega)$
  in the sense of distributions on $I$, and $u(0) = u_0$.
\end{theorem}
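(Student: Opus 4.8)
The plan is to move the convective term to the right-hand side, so that $u$ solves a \emph{linear} Stokes problem, and to combine the $L^2$-maximal parabolic regularity of \Cref{prop:max_par_reg} with the recently established $L^{4/3}$-maximal parabolic regularity for the Stokes operator; the pressure is recovered afterwards by a de Rham-type argument. First, \eqref{eq:nonlinearity_regularity} applied with the borderline exponents $s=q=4/3$, for which $\tfrac1s+\tfrac1q=\tfrac32$, gives $(u\cdot\nabla)u\in L^{4/3}(I;L^{4/3}(\Omega)^2)$. Since $\Omega$ and $I$ are bounded and $\gamma=\min\{s,4/3\}$, and since $f\in L^s(I;L^2(\Omega)^2)\hookrightarrow L^s(I;L^{4/3}(\Omega)^2)$, both $f$ and $(u\cdot\nabla)u$ lie in $L^\gamma(I;L^{4/3}(\Omega)^2)$.

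Applying the Leray projection to \eqref{eq:nav_stokes_weak}, $u$ solves $\partial_t u+Au=\mathbb P\bigl(f-(u\cdot\nabla)u\bigr)$ with $u(0)=u_0$. I would split $u=v+z$, where $v$ is the solution of the Stokes problem \eqref{eq:stokes_weak} with data $(f,u_0)$ and $z$ the one with data $\bigl(-(u\cdot\nabla)u,0\bigr)$; by linearity and uniqueness of the Stokes solution this decomposition is legitimate. For $v$, \Cref{prop:max_par_reg} together with $u_0\in V_{1-1/s}$ yields $\partial_t v,Av\in L^s(I;L^2(\Omega)^2)\hookrightarrow L^\gamma(I;L^{4/3}(\Omega)^2)$. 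For $z$ I would invoke \cite[Theorem 1.6]{gabel_stokes_2022}: the exponent $p=4/3$ satisfies $|1/p-1/2|=1/4<1/4+\varepsilon$, hence the Stokes operator enjoys $L^{4/3}$-maximal parabolic regularity on the (convex, thus Lipschitz) domain $\Omega$, and with the $L^{4/3}$-boundedness of the Leray projection and the domain identification $D(A_{4/3})=W^{2,4/3}(\Omega)^2\cap W^{1,4/3}_0(\Omega)^2$ (intersected with the divergence-free fields), the zero initial data case gives $\partial_t z,Az\in L^{4/3}(I;L^{4/3}(\Omega)^2)\hookrightarrow L^\gamma(I;L^{4/3}(\Omega)^2)$. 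Adding the two contributions, $\partial_t u,Au\in L^\gamma(I;L^{4/3}(\Omega)^2)$; in particular $u\in L^\gamma(I;W^{2,4/3}(\Omega)^2\cap W^{1,4/3}_0(\Omega)^2)$, so $\Delta u\in L^\gamma(I;L^{4/3}(\Omega)^2)$. (Alternatively, one may apply the $L^{4/3}$-maximal regularity directly to $u$, after checking that $V_{1-1/s}$ embeds into the associated trace space, which follows from $L^2(\Omega)\hookrightarrow L^{4/3}(\Omega)$ and $\gamma\le s$.)

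For the pressure, I would observe that for a.e.\ $t\in I$ the functional
\begin{equation*}
  H^1_0(\Omega)^2\ni\varphi\ \longmapsto\ \Oprod{f(t),\varphi}-\Opair{\partial_t u(t),\varphi}-\nu\Oprod{\nabla u(t),\nabla\varphi}-\Oprod{(u(t)\cdot\nabla)u(t),\varphi}
\end{equation*}
is a well-defined element of $H^{-1}(\Omega)^2$ — using the regularity just obtained and the two-dimensional embedding $H^1_0(\Omega)\hookrightarrow L^4(\Omega)$, i.e.\ $L^{4/3}(\Omega)\hookrightarrow H^{-1}(\Omega)$ — and that it vanishes on $V$ by \eqref{eq:nav_stokes_weak}. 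The de Rham/Ne\v cas lemma on the bounded Lipschitz domain $\Omega$ then yields a unique $p(t)\in L^2_0(\Omega)$ with $\Oprod{p(t),\nabla\cdot\varphi}$ equal to the negative of this functional for all $\varphi\in H^1_0(\Omega)^2$, and the Ne\v cas inequality bounds $\|p(t)\|_{L^2(\Omega)}$ by a constant times $\|f(t)\|_{L^{4/3}(\Omega)}+\|\partial_t u(t)\|_{L^{4/3}(\Omega)}+\|\nabla u(t)\|_{L^2(\Omega)}+\|(u(t)\cdot\nabla)u(t)\|_{L^{4/3}(\Omega)}$; integrating in time and using $L^s(I),L^2(I),L^{4/3}(I)\hookrightarrow L^\gamma(I)$ gives $p\in L^\gamma(I;L^2_0(\Omega))$. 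Uniqueness of $p$ follows from the surjectivity of $\nabla\cdot\colon H^1_0(\Omega)^2\to L^2_0(\Omega)$, while $\Oprod{\nabla\cdot u,q}=0$ for $q\in L^2_0(\Omega)$ is immediate since $u(t)\in V$; this is the asserted mixed formulation, in the sense of distributions on $I$, with $u(0)=u_0$ as in \Cref{prop:navier_stokes_solvability}. The only genuinely non-routine ingredient is the $L^{4/3}$-maximal parabolic regularity (with the attendant $L^{4/3}$ projection and domain facts) on the convex polygon, supplied by \cite[Theorem 1.6]{gabel_stokes_2022}; it is precisely because $4/3$ is the critical exponent with $|1/p-1/2|=1/4$, and because \eqref{eq:nonlinearity_regularity} is exploited at its borderline $\tfrac1s+\tfrac1q=\tfrac32$, that $\gamma$ cannot be pushed beyond $\min\{s,4/3\}$.
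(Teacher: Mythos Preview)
Your proof is correct and follows the same overall strategy as the paper: use \eqref{eq:nonlinearity_regularity} at the borderline $s=q=4/3$ to put the convective term in $L^{4/3}(I;L^{4/3}(\Omega)^2)$, invoke the $L^{4/3}$-maximal parabolic regularity of \cite[Theorem~1.6]{gabel_stokes_2022} (the paper makes the same observation that $|3/4-1/2|=1/4$ hits the admissible range), and then recover the pressure by a de Rham-type argument, for which the paper simply refers to \cite[Theorem~2.10]{behringer_fully_2022} while you spell out the Ne\v{c}as construction explicitly.

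The one substantive difference is in how the initial datum is handled. The paper applies the $L^{4/3}$-maximal regularity directly to the pair $(u_0,\tilde f)$ with $\tilde f=f-(u\cdot\nabla)u$, which tacitly requires $u_0\in V_{1-1/s}$ (defined via the $L^2$-semigroup) to lie in the appropriate trace space for the $L^{4/3}$-theory with time exponent $\gamma$. Your splitting $u=v+z$ routes the initial datum through the $L^2$-result of \Cref{prop:max_par_reg} and reserves the $L^{4/3}$-maximal regularity for $z$ with homogeneous initial data, so this trace-space compatibility question never arises. This is a mild but genuine gain in self-containedness; you also mention the paper's direct route as an alternative, so the two arguments are really variations on the same theme.
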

\begin{proof}
Due to \eqref{eq:nonlinearity_regularity} it holds $(u\cdot \nabla)u \in L^{4/3}(I;L^{4/3}(\Omega)^2)$.
Since $f \in L^s(I;L^2(\Omega))$, this implies 
$\tilde f := f - (u\cdot \nabla) u \in L^\gamma(I;L^{4/3}(\Omega)^2)$.
Since $|3/4 - 1/2| = 1/4$, \cite[Theorem 1.6]{gabel_stokes_2022} shows that the Stokes problem posesses 
maximal parabolic regularity in $L^{4/3}(\Omega)$,
which applied to $u_0$ and $\tilde f$ then yields
$
  \partial_t u, A u \in L^\gamma(I;L^{4/3}(\Omega)^2).
$
The existence of a pressure can then be proven as in \cite[Theorem 2.10]{behringer_fully_2022}.
\end{proof}
}
\blue{The above result only recovers parts of the regularity available for the Stokes problem.
Using $H^2$ regularity, we will now show improved regularity results for the Navier-Stokes equations.}
Note that from now on we explicitly require the convexity of $\Omega$, whereas 
\blue{\Cref{prop:navier_stokes_solvability,prop:max_par_reg} and \Cref{thm:exist_pressure}} also hold for general
Lipschitz domains.
\begin{theorem}[$H^2$ regularity]\label{thm:h2_regularity}
  Let $u_0 \in V$ and $f \in L^2(I;L^2(\Omega)^2)$.
  Then the weak solution $u$ to the Navier-Stokes equations \eqref{eq:nav_stokes_weak}
  satisfies the improved regularity
  \begin{equation*}
    u \in L^2(I;H^2(\Omega)^2) \cap H^1(I;L^2(\Omega)^2) \hookrightarrow C(\blue{\bar I};H^1(\Omega)^2),
  \end{equation*}
  and there exist constants $C_1, C_2 > 0$, \blue{depending on $\nu,\Omega$ but independent of $T$,}
  such that there hold the bounds
  \begin{align*}
    \|u\|_{\LinfHone} & \le C_1 \left(\|u_0\|_{V} + \|f\|_{\LtwoLtwo}\right) 
      \exp\left(C_2\left(\|u_0\|_H^4 + \|f\|_{\LoneLtwo}^4\right)\right),\\
    \|u\|_{\LtwoHtwo} & \le C_1 \left(\|u_0\|_{V} + \|f\|_{\LtwoLtwo}\right)
    \left(1 + \|u_0\|_{H}^2 + \|f\|_{\LoneLtwo}^2\right)
                      \exp\left(C_2\left(\|u_0\|_H^4 + \|f\|_{\LoneLtwo}^4\right)\right).
  \end{align*}
\end{theorem}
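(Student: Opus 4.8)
The plan is to derive the two bounds by a standard energy-estimate bootstrap, using as input only the energy estimate from \Cref{prop:navier_stokes_solvability}, the interpolation inequality \eqref{eq:l4_interpol_general} and the skew-symmetry of $c$ from \Cref{lemm:c_swap}, together with the $H^2$-elliptic regularity of the Stokes operator on convex $\Omega$ (i.e.\ $\|v\|_{H^2(\Omega)} \le C\|Av\|_{L^2(\Omega)}$ for $v \in D(A)$). First I would test \eqref{eq:nav_stokes_weak} with $Au = -\mathbb P\Delta u$ (formally; rigorously one works with a Galerkin/Faedo approximation and passes to the limit, or invokes \Cref{thm:exist_pressure} to justify that $Au$ is an admissible test direction after a density argument). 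Since $\Opair{\partial_t u, Au} = \tfrac12 \tfrac{d}{dt}\|\nabla u\|_{L^2(\Omega)}^2$ (using $u(t)\in V$) and $\nu\Oprod{\nabla u,\nabla(Au)} = \nu\|Au\|_{L^2(\Omega)}^2$, one is left to control the trilinear term $\Oprod{(u\cdot\nabla)u, Au}$.

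The key step is the estimate of the nonlinear term. Using Hölder, \eqref{eq:l4_interpol_general}, and the $H^2$ regularity,
\begin{equation*}
  |\Oprod{(u\cdot\nabla)u,Au}| \le \|u\|_{L^4(\Omega)}\|\nabla u\|_{L^4(\Omega)}\|Au\|_{L^2(\Omega)}
  \le C\|u\|_{L^2}^{1/2}\|\nabla u\|_{L^2}\|\nabla u\|_{H^1}^{1/2}\|Au\|_{L^2},
\end{equation*}
and then bounding $\|\nabla u\|_{H^1(\Omega)} \le C(\|\nabla u\|_{L^2} + \|Au\|_{L^2})$ and absorbing one factor of $\|Au\|_{L^2}^{3/2}$ via Young's inequality with exponents $4/3$ and $4$, one arrives at
\begin{equation*}
  |\Oprod{(u\cdot\nabla)u,Au}| \le \frac{\nu}{4}\|Au\|_{L^2}^2 + C\nu^{-3}\|u\|_{L^2}^2\|\nabla u\|_{L^2}^4 + C\|\nabla u\|_{L^2}^2.
\end{equation*}
The term $\Oprod{f,Au}$ is handled by $\le \tfrac{\nu}{4}\|Au\|_{L^2}^2 + C\nu^{-1}\|f\|_{L^2}^2$. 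Thus, with $y(t) := \|\nabla u(t)\|_{L^2(\Omega)}^2$, we obtain the differential inequality
\begin{equation*}
  y'(t) + \nu\|Au(t)\|_{L^2(\Omega)}^2 \le C\left(\nu^{-3}\|u(t)\|_{L^2}^2\,y(t)^2 + y(t) + \nu^{-1}\|f(t)\|_{L^2}^2\right).
\end{equation*}
Writing $g(t) := C(\nu^{-3}\|u(t)\|_{L^2(\Omega)}^2 y(t) + 1)$, Grönwall's lemma gives $y(t) \le \big(y(0) + C\nu^{-1}\|f\|_{L^1(I;L^2)}\cdot\sup\|f\|\big)$ — more precisely $y(t) \le \big(y(0) + C\nu^{-1}\|f\|_{\LtwoLtwo}^2\big)\exp\!\big(\int_I g\big)$. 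Here $y(0) = \|\nabla u_0\|_{L^2}^2 \le \|u_0\|_V^2$, and the crucial point is that $\int_I g \le C\big(T + \nu^{-3}\|u\|_{L^\infty(I;L^2)}^2\|\nabla u\|_{L^2(I;L^2)}^2\big)$, which by \Cref{prop:navier_stokes_solvability} (whose constant is $T$-independent) is bounded by $C + C\nu^{-4}(\|u_0\|_H^2 + \|f\|_{\LoneLtwo}^2)^2 = C + C\nu^{-4}(\|u_0\|_H^4 + \|f\|_{\LoneLtwo}^4)$ after absorbing $T$ into the additive constant; this yields the stated $\LinfHone$ bound once one checks the square root can be taken (the product $\|u_0\|_V + \|f\|_{\LtwoLtwo}$ appearing linearly comes from $\sqrt{y(0) + C\nu^{-1}\|f\|_{\LtwoLtwo}^2}$). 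Finally, integrating the differential inequality over $I$ and using the already-established $\LinfHone$ bound to control $\int_I g\cdot y \le \|y\|_{L^\infty}\int_I g$ gives $\nu\|Au\|_{\LtwoLtwo}^2 \le y(0) + C\nu^{-1}\|f\|_{\LtwoLtwo}^2 + \|y\|_{L^\infty(I)}\int_I g$; taking square roots and invoking $\|u\|_{\LtwoHtwo} \le C\|Au\|_{\LtwoLtwo}$ produces the second estimate, the extra polynomial factor $(1 + \|u_0\|_H^2 + \|f\|_{\LoneLtwo}^2)$ being exactly $\int_I g$ (up to constants). The embedding $L^2(I;H^2)\cap H^1(I;L^2) \hookrightarrow C(\bar I;H^1)$ is the standard Lions–Magenes interpolation result, and $\partial_t u \in L^2(I;L^2)$ follows from the equation since $\nu\Delta u$, $(u\cdot\nabla)u$ (by \eqref{eq:nonlinearity_regularity} and the $L^4$–$L^4$ bound now available) and $\nabla p$ are all in $L^2(I;L^2)$.

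The main obstacle is the rigorous justification of testing with $Au$: a priori one only knows $u \in L^2(I;V)\cap C(\bar I;L^2)$, so $Au$ need not be in $L^2(I;L^2)$ before the estimate is established. The clean way around this is to run the entire argument on a Galerkin approximation $u_N$ in the eigenbasis of $A$ — where all manipulations are legitimate and the constants are uniform in $N$ — obtain the two bounds for $u_N$, and then pass to the limit by weak/weak-$\ast$ compactness, using lower semicontinuity of the norms; uniqueness of the weak solution identifies the limit with $u$. A secondary (minor) technical point is tracking the $T$-independence of all constants: the only place $T$ enters is through the additive "$+1$" in $g$, which integrates to $T$; this is why the final estimates carry an explicit $\exp(C_2(\cdots))$ with the argument bounded independently of $T$, but the prefactor absorbs a harmless $e^{C_2 T}$-type contribution — one should either state the constants as depending on $T$ here, or, to match the claim, note that the $y(t)$-term in $g$ already dominates and the "$+1$" can be merged, keeping genuine $T$-independence as asserted.
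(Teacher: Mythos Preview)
Your proposal is correct and follows exactly the classical argument the paper cites (Temam, Chapter~3, Theorem~3.10), with the $H^2$ regularity of the Stokes operator on convex polygons replacing the smooth-domain version; the paper's own proof is nothing more than this citation together with the remark that the bounds follow from Gronwall's lemma. Your concern about $T$-independence resolves cleanly if you use the sharp elliptic estimate $\|\nabla u\|_{H^1(\Omega)} \le C\|Au\|_{L^2(\Omega)}$ (valid for $u\in V$ on convex $\Omega$) rather than $\le C(\|\nabla u\|_{L^2}+\|Au\|_{L^2})$, which eliminates the spurious ``$+1$'' in $g$ and makes $\int_I g$ bounded purely by $C\nu^{-4}(\|u_0\|_H^4+\|f\|_{\LoneLtwo}^4)$.
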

\begin{proof}
  The proof of this result for $C^2$ domains can be found in \cite[Chapter 3, Theorem 3.10]{Temam1977}.
  Instead of a $C^2$ boundary, we can also use the $H^2$ regularity for the Stokes operator on 
  convex, polygonal domains, see, e.g., \cite[Theorem 5.5]{Dauge1989} or \cite[Theorem 2]{Kellogg1976},
  to obtain the claimed regularity. The norm bounds are obtained by the Gronwall lemma.
\end{proof}
\blue{Note that contrary to the $H^2$ regularity for the instationary Stokes problem, 
for the Navier-Stokes problem, even the proofs of the regularities $\partial_t u, Au \in L^2(I;L^2(\Omega)^2)$,
that are contained in the above result, require convexity of the domain.}
We shall state a corresponding estimate for $\|\partial_t u\|_{\LtwoLtwo}$ after discussing 
the regularity of the nonlinearity.
\begin{remark}\label{rem:LtwoLtwo_nonlinearity}
  By \blue{Hölder's} inequality, with this $H^2$ regularity result, and the imbedding
  $H^2(\Omega) \hookrightarrow L^\infty(\Omega)$, we immediately obtain
  \begin{equation*}
    (u \cdot \nabla) u \in L^2(I;L^2(\Omega)^2).
  \end{equation*}
  The regularity $u \in C(\blue{\bar I};H^1(\Omega)^2)$  almost yields that $u$ is uniformly
  bounded over the whole space time cylinder. However, since in two dimensions
  $H^1(\Omega) \not\hookrightarrow L^\infty(\Omega)$, the boundedness in space
  has to be shown via an additional argument.
\end{remark}
We first show, that the nonlinearity actually posesses more regularity, than what was claimed in 
\Cref{rem:LtwoLtwo_nonlinearity}.
\begin{theorem}\label{thm:nonlinearity_higher_regularity}
  Let the assertions of \Cref{thm:h2_regularity} be satisfied,
  i.e., $u_0 \in V$ and $f \in L^2(I;L^2(\Omega)^2)$, and let $u$ be the unique solution to the
  Navier-Stokes equations
  \eqref{eq:nav_stokes_weak}. Then the nonlinearity satisfies
  \begin{equation*}
    (u \cdot \nabla) u \in L^s(I;L^2(\Omega)^2) \qquad \text{for any } 1 \le s < \infty,
  \end{equation*}
  and for any $0 < \delta \le \min\{2,s\}$ there holds the estimate
  \begin{equation*}
    \|(u\cdot \nabla) u \|_{L^s(I;L^2(\Omega))} 
    \le C \|u\|_{\LtwoHtwo}^{ \frac{\delta}{s}} \|u\|_{\LinfHone}^{2 - \frac{\delta}{s} },
  \end{equation*}
  where the norms of $u$ on the right hand side can be estimated by \Cref{thm:h2_regularity}.
\end{theorem}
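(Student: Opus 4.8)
The plan is to prove the estimate pointwise in time and then integrate over $I$, choosing the interpolation exponent in terms of $s$ and $\delta$. Throughout, set $\alpha := \delta/s$; the hypothesis $0<\delta\le\min\{2,s\}$ guarantees both $\alpha\in(0,1]$ and $\delta\le2$, and each of these will be used exactly once. Heuristically, the larger $s$ is, the less $H^2$‑regularity of $u$ one is allowed to spend, which is why $\alpha\to0$ as $s\to\infty$.

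First I would establish, for a.e. $t\in I$, the pointwise bound
\begin{equation*}
  \|(u(t)\cdot\nabla)u(t)\|_{L^2(\Omega)} \le C\,\|u(t)\|_{H^1(\Omega)}^{2-\alpha}\,\|u(t)\|_{H^2(\Omega)}^{\alpha},
\end{equation*}
with $C$ depending on $\Omega$ and $\alpha$ (hence on $s,\delta$). Starting from the pointwise-in-space estimate $|(u\cdot\nabla)u|\le\|u(t)\|_{L^\infty(\Omega)}|\nabla u(t)|$, which yields $\|(u(t)\cdot\nabla)u(t)\|_{L^2(\Omega)}\le\|u(t)\|_{L^\infty(\Omega)}\|\nabla u(t)\|_{L^2(\Omega)}$, I would control $\|u(t)\|_{L^\infty(\Omega)}$ using that in two dimensions $H^{1+\alpha}(\Omega)\hookrightarrow L^\infty(\Omega)$ for every $\alpha\in(0,1]$ (for $\alpha=1$ this is just $H^2(\Omega)\hookrightarrow L^\infty(\Omega)$), together with the interpolation inequality $\|v\|_{H^{1+\alpha}(\Omega)}\le C\|v\|_{H^1(\Omega)}^{1-\alpha}\|v\|_{H^2(\Omega)}^{\alpha}$ stemming from $H^{1+\alpha}(\Omega)=[H^1(\Omega),H^2(\Omega)]_\alpha$ (cf. the discussion of fractional Sobolev spaces in \Cref{sec:preliminary}). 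This is legitimate since $u(t)\in V\cap H^2(\Omega)^2$ for a.e. $t$ by \Cref{thm:h2_regularity}. (For $\alpha\le\tfrac12$ one could alternatively use $\|(u\cdot\nabla)u\|_{L^2}\le\|u\|_{L^4}\|\nabla u\|_{L^4}$ together with \eqref{eq:l4_interpol_general} of \Cref{lemm:c_swap}; the route via $H^{1+\alpha}$ covers all admissible $\alpha$ uniformly.)

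Next I would raise the pointwise bound to the power $s$ and integrate, using $\|u(\cdot)\|_{H^1(\Omega)}\in L^\infty(I)$ and $\|u(\cdot)\|_{H^2(\Omega)}\in L^2(I)$ from \Cref{thm:h2_regularity}:
\begin{equation*}
  \int_I \|(u\cdot\nabla)u\|_{L^2(\Omega)}^s\,dt \le C\,\|u\|_{\LinfHone}^{2s-\delta}\int_I \|u(t)\|_{H^2(\Omega)}^{\delta}\,dt,
\end{equation*}
where I used $(2-\alpha)s=2s-\delta$ and $\alpha s=\delta$. Since $\delta\le2$, Hölder's inequality in time with exponents $2/\delta$ and $2/(2-\delta)$ gives $\int_I\|u(t)\|_{H^2(\Omega)}^\delta\,dt\le|I|^{1-\delta/2}\|u\|_{\LtwoHtwo}^\delta$ (with nothing to do when $\delta=2$). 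Taking the $s$‑th root and absorbing the fixed factor $|I|^{(1-\delta/2)/s}=T^{(1-\delta/2)/s}$ into the constant yields the asserted estimate $\|(u\cdot\nabla)u\|_{L^s(I;L^2(\Omega))}\le C\|u\|_{\LtwoHtwo}^{\delta/s}\|u\|_{\LinfHone}^{2-\delta/s}$; in particular, choosing e.g.\ $\delta=\min\{2,s\}$ and invoking the bounds of \Cref{thm:h2_regularity} gives $(u\cdot\nabla)u\in L^s(I;L^2(\Omega)^2)$ for every $1\le s<\infty$. I do not expect a genuine obstacle; the only points needing (routine) care are the verification that $\alpha=\delta/s$ lies in $(0,1]$, the two-dimensional embedding/interpolation inequality used to bound $\|u(t)\|_{L^\infty(\Omega)}$, and the restriction $\delta\le2$, which is precisely what makes the Hölder step in the time variable admissible.
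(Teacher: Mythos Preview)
Your proposal is correct and follows essentially the same route as the paper: bound $\|(u\cdot\nabla)u\|_{L^2(\Omega)}$ pointwise in time by $\|u\|_{L^\infty(\Omega)}\|\nabla u\|_{L^2(\Omega)}$, control $\|u\|_{L^\infty(\Omega)}$ via the two-dimensional embedding $H^{1+\delta/s}(\Omega)\hookrightarrow L^\infty(\Omega)$ combined with the interpolation inequality $\|u\|_{H^{1+\delta/s}}\le C\|u\|_{H^1}^{1-\delta/s}\|u\|_{H^2}^{\delta/s}$, then raise to the $s$-th power, integrate, and apply H\"older in time using $\delta\le 2$. The only cosmetic difference is that you make the factor $|I|^{1-\delta/2}$ coming from the time-H\"older step explicit before absorbing it into the constant.
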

\begin{proof}
  The proof follows the ideas of \cite[Theorem V.1.8.2]{Sohr2001}, however we will use  
  interpolation spaces instead of fractional powers of the Stokes operator.
  With \blue{Hölder's} inequality  and the Sobolev imbedding 
  $H^{1+ \frac{\delta}{s}}(\Omega) \hookrightarrow C(\overline{\Omega})$,
  it holds for any $s < \infty$ and $\delta > 0$:
  \begin{equation*}
    \|(u\cdot \nabla)u\|_{L^2(\Omega)}^s 
    \le \|u\|_{L^\infty(\Omega)}^s \|\nabla u\|_{L^2(\Omega)}^s
    \le C\|u\|_{H^{1 + \frac{\delta}{s}}(\Omega)}^s \|\nabla u\|_{L^2(\Omega)}^s.
  \end{equation*}
  Since $\delta \le s$, we can express the space $H^{1+ \frac{\delta}{s}}(\Omega)$ as interpolation space
  $[H^1(\Omega),H^2(\Omega)]_{\delta/s}$, and
  obtain from \cite[Theorem 1]{Brezis2018} the estimate
  \begin{equation*}
    \|u\|_{H^{1 + \frac{\delta}{s}}(\Omega)} \le 
    C \|u\|_{H^{1}(\Omega)}^{1- \frac{\delta}{s}} 
    \|u\|_{H^{2}(\Omega)}^{ \frac{\delta}{s}}.
  \end{equation*}
  All in all, we see that
  \begin{equation*}
    \|(u\cdot \nabla)u\|_{L^2(\Omega)}^s \le C \|u\|_{H^2(\Omega)}^\delta \|u\|_{H^1(\Omega)}^{2s - \delta},
  \end{equation*}
  which is integrable in time, since $u \in L^\infty(I;H^1(\Omega)^2) \cap L^2(I;H^2(\Omega)^2)$,
  by Theorem \ref{thm:h2_regularity} and $\delta \le 2$. 
  With \blue{Hölder's} inequality, we obtain the proposed estimate, which concludes the proof.
\end{proof}
\begin{remark}
  The previous result $(u\cdot \nabla)u \in L^s(I;L^2(\Omega)^2)$ shows, that 
  the Navier-Stokes equations inherit the maximal parabolic regularity of the 
  Stokes problem, in cases where $f \in L^s(I;L^2(\Omega)^2)$ and $u_0 \in V_{1- 1/s}$ for some
  $2 \le s < \infty$. Especially it also holds $Au, \partial_t u \in L^s(I;L^2(\Omega)^2)$ with
  \begin{equation*}
    \|\partial_t u\|_{L^s(I;L^2(\Omega))} + \|Au\|_{L^s(I;L^2(\Omega))} 
    \le C \left(\|u_0\|_{V_{1-1/s}} 
    + \|f\|_{L^s(I;L^2(\Omega))} + \|(u \cdot \nabla) u\|_{L^s(I;L^2(\Omega))}\right).
  \end{equation*}
\end{remark}
With \Cref{thm:nonlinearity_higher_regularity}, and slightly higher regularity
of the data, we obtain the boundedness of $u$ in the space-time cylinder:
\begin{theorem}\label{thm:navier_stokes_higher_regularity}
  Let $\varepsilon > 0$, $f \in L^{2+\varepsilon}(I;L^2(\Omega)^2)$ and 
  $u_0 \in V_{1-1/(2+\varepsilon)}$.
  Then the unique weak solution $u$ to 
  \eqref{eq:nav_stokes_classical} satisfies additionally
  \begin{equation*}
    u \in C(\bar I \times \bar \Omega)^2,
  \end{equation*}
  and for $\varepsilon$ sufficiently small, there holds the estimate
  \begin{equation*}
    \|u\|_{L^\infty(I \times \Omega)}
    \le C \left(\|u_0\|_{V_{1-1/(2+\varepsilon)}} + \|f\|_{L^{2+\varepsilon}(I;L^2(\Omega))} 
    + \|u\|_{\LtwoHtwo}^{\varepsilon} \|u\|_{\LinfHone}^{2 -\varepsilon}\right),
  \end{equation*}
  where the norms of $u$ on the right hand side can be estimated by \Cref{thm:h2_regularity}.
\end{theorem}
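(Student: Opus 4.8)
The plan is a bootstrapping argument: upgrade the regularity of the nonlinearity, feed it into the maximal parabolic regularity of the Stokes problem, and then use a trace embedding to land in a space of continuous functions. Set $s := 2+\varepsilon$. Since $I$ is bounded we have $f \in L^s(I;L^2(\Omega)^2) \hookrightarrow L^2(I;L^2(\Omega)^2)$, and since $s > 2$ the real interpolation scale gives $u_0 \in V_{1-1/s} \hookrightarrow V$ (for $s=2$ one has $V_{1/2} = V$, and the inclusion only improves for larger $s$). Hence the hypotheses of \Cref{thm:h2_regularity} and \Cref{thm:nonlinearity_higher_regularity} are met. Applying \Cref{thm:nonlinearity_higher_regularity} with the choice $\delta = \varepsilon s = \varepsilon(2+\varepsilon)$ — admissible precisely because $\varepsilon$ is small enough that $\delta \le \min\{2,s\} = 2$, which is the role of the smallness assumption — we obtain $(u\cdot\nabla)u \in L^s(I;L^2(\Omega)^2)$ with
\[
  \|(u\cdot\nabla)u\|_{L^{s}(I;L^2(\Omega))} \le C\,\|u\|_{\LtwoHtwo}^{\varepsilon}\,\|u\|_{\LinfHone}^{2-\varepsilon}.
\]

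Next I would write $\tilde f := f - (u\cdot\nabla)u \in L^{s}(I;L^2(\Omega)^2)$ and regard $u$ as the solution of the instationary Stokes problem \eqref{eq:stokes_weak} with right-hand side $\tilde f$ and initial value $u_0 \in V_{1-1/s}$. By \Cref{prop:max_par_reg} and the $H^2$-regularity of the Stokes operator on convex polygons ($D(A) = V \cap H^2(\Omega)^2$, with equivalence of the graph norm and the $H^2$-norm), this yields $u \in L^{s}(I;H^2(\Omega)^2) \cap W^{1,s}(I;L^2(\Omega)^2)$ together with
\[
  \|\partial_t u\|_{L^{s}(I;L^2(\Omega))} + \|u\|_{L^{s}(I;H^2(\Omega))} \le C\left(\|u_0\|_{V_{1-1/s}} + \|f\|_{L^{s}(I;L^2(\Omega))} + \|(u\cdot\nabla)u\|_{L^{s}(I;L^2(\Omega))}\right).
\]

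It remains to pass from this parabolic regularity to pointwise-in-time-and-space continuity. The maximal regularity space $L^{s}(I;D(A)) \cap W^{1,s}(I;L^2(\Omega)^2)$ embeds continuously into $C(\bar I;(L^2(\Omega)^2,H^2(\Omega)^2)_{1-1/s,s}) = C(\bar I; B^{2(1-1/s)}_{2,s}(\Omega)^2)$; since $2(1-1/s) > 1$ for every $\varepsilon > 0$, the Sobolev–Besov embedding $B^{2(1-1/s)}_{2,s}(\Omega) \hookrightarrow C(\bar\Omega)$ in two dimensions gives $u \in C(\bar I; C(\bar\Omega)^2) = C(\bar I \times \bar\Omega)^2$ and
\[
  \|u\|_{L^\infty(I\times\Omega)} \le C\left(\|\partial_t u\|_{L^{s}(I;L^2(\Omega))} + \|u\|_{L^{s}(I;H^2(\Omega))} + \|u\|_{L^\infty(I;L^2(\Omega))}\right).
\]
Inserting the estimate of the previous step for the first two terms, the nonlinearity estimate from the first step, and absorbing $\|u\|_{L^\infty(I;L^2(\Omega))}$ via \Cref{prop:navier_stokes_solvability} (which, together with $V_{1-1/s}\hookrightarrow H$ and $L^s(I;L^2(\Omega))\hookrightarrow L^1(I;L^2(\Omega))$, is controlled by the remaining terms) yields exactly the asserted bound. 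The main obstacle is not a single deep fact but the careful bookkeeping of the bootstrap: one must choose $\delta$ so that the exponent in \Cref{thm:nonlinearity_higher_regularity} matches the claimed power $\varepsilon$ while remaining admissible, which forces $\varepsilon$ to be small; the remaining ingredients — maximal parabolic regularity, the characterization of traces of maximal regularity spaces as real interpolation spaces, and the Besov embedding into $C(\bar\Omega)$ — are standard once the nonlinearity is known to lie in $L^s(I;L^2(\Omega)^2)$.
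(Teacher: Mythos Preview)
Your proposal is correct and follows essentially the same route as the paper: bootstrap the nonlinearity into $L^{2+\varepsilon}(I;L^2(\Omega)^2)$ via \Cref{thm:nonlinearity_higher_regularity} with $\delta = \varepsilon(2+\varepsilon)$, apply maximal parabolic regularity of the Stokes problem, and then embed the resulting maximal regularity space into a space of continuous functions. The only cosmetic difference is in the final embedding step: the paper invokes a result of Amann with handpicked intermediate parameters landing in $C^{s-1/p}(\bar I;(L^2,H^2)_{\theta,1})$ before passing through $H^{2\theta}\hookrightarrow C(\bar\Omega)$, whereas you use the standard trace characterization $L^s(I;D(A))\cap W^{1,s}(I;H)\hookrightarrow C(\bar I;(H,D(A))_{1-1/s,s})$ together with the Besov embedding $B^{2(1-1/s)}_{2,s}(\Omega)\hookrightarrow C(\bar\Omega)$, which is arguably cleaner and yields the same conclusion.
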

\begin{proof}
  Applying Theorem \ref{thm:nonlinearity_higher_regularity}, we observe, that it holds especially 
  $(u\cdot \nabla) u \in L^{2+\varepsilon}(I;L^2(\Omega)^2)$.
  For $\varepsilon$ small enough, such that $\delta := \varepsilon (2 + \varepsilon) \le 2$, 
  there moreover holds
  \begin{equation*}
    \|(u\cdot \nabla) u \|_{L^{2+\varepsilon}(I;L^2(\Omega))} 
    \le C \|u\|_{\LtwoHtwo}^{\varepsilon} \|u\|_{\LinfHone}^{2 - \varepsilon}.
  \end{equation*}
  By a bootstrapping argument, since $f \in L^{2+\varepsilon}(I;\blue{L^2(\Omega)^2})$ and 
  $u_0 \in V_{1-1/\varepsilon}$, we can apply the maximal parabolic regularity 
  of \Cref{prop:max_par_reg}, and obtain together with $H^2$ regularity
  \begin{equation*}
    \partial_t u \in L^{2+\varepsilon}(I;\blue{L^2(\Omega)^2}), \qquad u \in L^{2+\varepsilon}(I;\blue{H^2(\Omega)^2}).
  \end{equation*}
  Hence
  $u \in W^{1,2+\varepsilon}(I;L^2(\Omega)^2) \cap L^{2+\varepsilon}(I;H^2(\Omega)^2)$.
  With the definitions $p:= 2+ \varepsilon$, $s:= \frac{1}{2 + \varepsilon/2}$,
  $\theta := 1 - \frac{1}{2+\varepsilon/4} = \frac{1}{2} + \frac{2\varepsilon}{8 + \varepsilon}$ it holds
  $1/p < s < 1/2$ and $1/2 < \theta < 1 - s$.
  We can thus apply \cite[Theorem 3]{Amann2001}, see also \cite[Lemma 2.11 b)]{disser_holder_2017},
  and obtain
  \begin{equation*}
    u \in C^{s- 1/p}(\blue{\bar I};(L^2(\Omega),H^2(\Omega))_{\theta,1}^2).
  \end{equation*}
  Applying 
  \cite[Theorems 3.4.1 \& 6.4.5]{bergh_interpolation_1976}, and 
  \cite[Theorem 4.57]{demengel_functional_2012}, 
  we moreover have
    \begin{align*}
      u \in C^{s - 1/p}(\blue{\bar I;(L^2(\Omega),H^2(\Omega))_{\theta,2}^2})
      = C^{ \frac{\varepsilon}{8 + 6 \varepsilon+ \varepsilon^2}}
      (\blue{\bar I};H^{1+ \frac{4\varepsilon}{8 + \varepsilon}}(\Omega)^2)
            \hookrightarrow C(\bar I \times \bar \Omega)^2.
    \end{align*}
  Note that here we have used, that for the $L^2(\Omega)$ case, all corresponding
  Sobolev-Slobodeckij, Besov, Bessel-potential and interpolation spaces coincide, see, e.g.,
  \cite{magenes_interpolational_1966}. For an overview over the topic of function spaces, see also
  \cite{triebel_theory_1992}. From the used embeddings, we moreover have the proposed estimate.
  This concludes the proof.
\end{proof}
\begin{corollary}\label{corr:nonlinearity_linftyl2}
  Let the assertions of Theorem \ref{thm:navier_stokes_higher_regularity} be satisfied, i.e.,
  $f \in L^{2+\varepsilon}(I;L^2(\Omega)^2)$ and $u_0 \in V_{1-1/(2+\varepsilon)}$
  for some $\varepsilon > 0$.
  Then the nonlinear term in the Navier-Stokes equations satisfies
  \begin{equation*}
    (u \cdot \nabla) u \in L^\infty(I;L^2(\Omega)^2),
  \end{equation*}
  and there holds the estimate
  \begin{equation*}
    \|(u\cdot \nabla) u\|_{L^\infty(I;L^2(\Omega))} \le 
    C \left(\|u_0\|_{V_{1-1/(2+\varepsilon)}} + \|f\|_{L^{2+\varepsilon}(I;L^2(\Omega))} 
    + \|u\|_{\LtwoHtwo}^{ \frac{2}{2+\varepsilon}} \|u\|_{\LinfHone}^{2 - \frac{2}{2+\varepsilon}}\right)
    \|u\|_{\LinfHone}.
  \end{equation*}
  where the norms of $u$ on the right hand side can be estimated by \Cref{thm:h2_regularity}.
\end{corollary}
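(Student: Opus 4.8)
The plan is to obtain the result by combining a pointwise-in-time Hölder estimate for the nonlinear term with the global space--time bound already established in \Cref{thm:navier_stokes_higher_regularity}. First I would note that, since $u_0 \in V_{1-1/(2+\varepsilon)}$ embeds into $V$ and $f \in L^{2+\varepsilon}(I;L^2(\Omega)^2)$ embeds into $L^2(I;L^2(\Omega)^2)$, \Cref{thm:h2_regularity} applies and yields in particular $u \in C(\bar I;H^1(\Omega)^2)$, hence $\nabla u \in L^\infty(I;L^2(\Omega))$; moreover \Cref{thm:navier_stokes_higher_regularity} gives $u \in C(\bar I\times\bar\Omega)^2$, so $u \in L^\infty(I\times\Omega)$. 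Then, for a.e.\ $t \in I$, Hölder's inequality gives
\begin{equation*}
  \|(u(t)\cdot\nabla)u(t)\|_{L^2(\Omega)} \le \|u(t)\|_{L^\infty(\Omega)}\,\|\nabla u(t)\|_{L^2(\Omega)}
  \le \|u\|_{L^\infty(I\times\Omega)}\,\|u(t)\|_{H^1(\Omega)},
\end{equation*}
and taking the essential supremum over $t\in I$ both shows $(u\cdot\nabla)u \in L^\infty(I;L^2(\Omega)^2)$ and produces the bound $\|(u\cdot\nabla)u\|_{L^\infty(I;L^2(\Omega))} \le \|u\|_{L^\infty(I\times\Omega)}\|u\|_{\LinfHone}$.

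It then remains to insert an estimate for $\|u\|_{L^\infty(I\times\Omega)}$. The version stated in \Cref{thm:navier_stokes_higher_regularity} carries the factor $\|u\|_{\LtwoHtwo}^{\varepsilon}\|u\|_{\LinfHone}^{2-\varepsilon}$, which arose from estimating $\|(u\cdot\nabla)u\|_{L^{2+\varepsilon}(I;L^2(\Omega))}$ through \Cref{thm:nonlinearity_higher_regularity} with the particular choice $\delta = \varepsilon(2+\varepsilon)$. Re-running that argument but instead choosing the maximal admissible value $\delta = \min\{2,2+\varepsilon\} = 2$ in \Cref{thm:nonlinearity_higher_regularity} gives $\|(u\cdot\nabla)u\|_{L^{2+\varepsilon}(I;L^2(\Omega))} \le C\|u\|_{\LtwoHtwo}^{2/(2+\varepsilon)}\|u\|_{\LinfHone}^{2-2/(2+\varepsilon)}$, and hence the variant
\begin{equation*}
  \|u\|_{L^\infty(I\times\Omega)} \le C\left(\|u_0\|_{V_{1-1/(2+\varepsilon)}} + \|f\|_{L^{2+\varepsilon}(I;L^2(\Omega))} + \|u\|_{\LtwoHtwo}^{2/(2+\varepsilon)}\|u\|_{\LinfHone}^{2-2/(2+\varepsilon)}\right).
\end{equation*}
Plugging this into the previous bound yields exactly the claimed estimate, and the norms of $u$ on the right-hand side are then controlled via \Cref{thm:h2_regularity}.

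I do not expect a genuine obstacle here, since this is essentially bookkeeping on top of \Cref{thm:navier_stokes_higher_regularity}. The one point that needs care is that the argument cannot be shortened by invoking only $u \in C(\bar I;H^1(\Omega)^2)$: in two dimensions $H^1(\Omega)\not\hookrightarrow L^\infty(\Omega)$ (as noted in \Cref{rem:LtwoLtwo_nonlinearity}), so the genuine pointwise-in-space boundedness from \Cref{thm:navier_stokes_higher_regularity} is indispensable; the only other subtlety is to match the interpolation exponent $2/(2+\varepsilon)$ appearing in the statement, which is achieved precisely by taking $\delta = 2$ in \Cref{thm:nonlinearity_higher_regularity}.
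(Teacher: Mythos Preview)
Your proof is correct and follows exactly the approach indicated by the paper's one-line proof, which simply cites \Cref{thm:h2_regularity,thm:navier_stokes_higher_regularity} and H\"older's inequality. Your observation that the exponent $2/(2+\varepsilon)$ in the statement arises by taking $\delta=2$ in \Cref{thm:nonlinearity_higher_regularity} (rather than the $\delta=\varepsilon(2+\varepsilon)$ used in the displayed bound of \Cref{thm:navier_stokes_higher_regularity}) is a useful clarification that the paper leaves implicit.
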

\begin{proof}
  This is a direct consequence of \Cref{thm:h2_regularity,thm:navier_stokes_higher_regularity}
  and application of \blue{Hölder's} inequality.
\end{proof}
In the formulation of \cref{eq:nav_stokes_weak}, we have used divergence free test functions.
Whenever we want to test the equation with functions, that are not divergence free, we have to 
consider an alternative, equivalent formulation, that includes the pressure.
The following theorem guarantees, that we can freely switch between the two formulations, 
\blue{when $u_0 \in V$ and $f \in L^2(I;L^2(\Omega)^2)$}.
\begin{theorem}
  Let the assertions of Theorem \ref{thm:h2_regularity} be satisfied,
  i.e., $u_0 \in V$ and $f \in L^2(I;L^2(\Omega)^2)$.
  Then there exists a unique solution $(u,p)$ with
  \begin{equation*}
    u \in L^2(I;H^2(\Omega)^2) \cap C(\blue{\bar I};V) \blue{, \ \partial_t u \in L^2(I;L^2(\Omega)^2)}
    \quad \text{and} \quad
    p \in L^2(I;H^1(\Omega) \cap L^2_0(\Omega)),
  \end{equation*}
  such that $u(0) = u_0$ and
  \begin{equation}\label{eq:nav_stokes_weak_with_pressure}
    \blue{\IOprod{\partial_t u,v} }
    + \nu \IOprod{\nabla u, \nabla v} 
    + \IOprod{(u\cdot \nabla)u,v}
    - \IOprod{p,\nabla \cdot v} + \IOprod{q, \nabla \cdot u} 
    = \IOprod{f,v},
  \end{equation}
  for all $(v,q) \in L^2(I;H^1_0(\Omega)^2)\times L^2(I;L^2_0(\Omega))$.
  Further it holds
  \begin{equation*}
    \|p\|_{L^2(I;H^1(\Omega))} \le C \left(\|u_0\|_V + \|f\|_{L^2(I;L^2(\Omega))} 
    + \|u\|_{L^2(I;H^2(\Omega))}\|u\|_{L^\infty(I;H^1(\Omega))}\right),
  \end{equation*}
  where the norms of $u$ on the right hand side can be estimated by \Cref{thm:h2_regularity}.
\end{theorem}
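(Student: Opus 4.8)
The plan is to reduce the velocity-pressure formulation to the already established divergence-free formulation of Proposition~\ref{prop:navier_stokes_solvability} plus the regularity results obtained above, and to construct the pressure via a de Rham type argument. Concretely, I would first invoke \Cref{thm:h2_regularity} to get $u \in L^2(I;H^2(\Omega)^2) \cap H^1(I;L^2(\Omega)^2) \hookrightarrow C(\bar I;V)$ solving \eqref{eq:nav_stokes_weak} with $u(0) = u_0$, and \Cref{rem:LtwoLtwo_nonlinearity} to get $(u\cdot\nabla)u \in L^2(I;L^2(\Omega)^2)$. Setting $\tilde f := f - (u\cdot\nabla)u - \partial_t u + \nu \mathbb{P}\Delta u \in L^2(I;L^2(\Omega)^2)$ and using that \eqref{eq:nav_stokes_weak} says $\langle \tilde f(t), v\rangle_\Omega = 0$ for all $v \in V$ and a.e.\ $t$, the classical de Rham / closed-range argument (e.g.\ \cite[Chapter~1, Proposition~1.1 and Corollary~1.1]{Temam1977}) yields, for a.e.\ $t$, a unique $p(t) \in L^2_0(\Omega)$ with $\nabla p(t) = \tilde f(t)$ in $H^{-1}(\Omega)^2$, and the inf-sup (Ne\v{c}as) inequality gives $\|p(t)\|_{L^2(\Omega)} \le C\|\tilde f(t)\|_{H^{-1}(\Omega)} \le C\|\tilde f(t)\|_{L^2(\Omega)}$, so that $p \in L^2(I;L^2_0(\Omega))$ and \eqref{eq:nav_stokes_weak_with_pressure} holds after integrating in time and testing against arbitrary $(v,q)$; the $q$-term vanishes because $\nabla\cdot u = 0$.

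The second step is the improved spatial regularity $p \in L^2(I;H^1(\Omega))$ together with its norm bound. Here I would use $\nabla p = \tilde f = f - (u\cdot\nabla)u - \partial_t u + \nu \mathbb{P}\Delta u$ pointwise in time. The terms $f$, $\partial_t u$ and $\nu\mathbb{P}\Delta u$ are all in $L^2(I;L^2(\Omega)^2)$ by \Cref{thm:h2_regularity}, and for the nonlinearity I would estimate, using \blue{Hölder's} inequality and the embedding $H^2(\Omega)\hookrightarrow L^\infty(\Omega)$, $\|(u\cdot\nabla)u(t)\|_{L^2(\Omega)} \le \|u(t)\|_{L^\infty(\Omega)}\|\nabla u(t)\|_{L^2(\Omega)} \le C\|u(t)\|_{H^2(\Omega)}\|u(t)\|_{H^1(\Omega)}$, and then take the $L^2$ norm in time, bounding by $\|u\|_{L^2(I;H^2(\Omega))}\|u\|_{L^\infty(I;H^1(\Omega))}$. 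Consequently $\nabla p \in L^2(I;L^2(\Omega)^2)$ with the asserted bound, and since $p(t) \in L^2_0(\Omega)$ the Poincaré–Wirtinger inequality upgrades this to $p \in L^2(I;H^1(\Omega))$ with $\|p\|_{L^2(I;H^1(\Omega))} \le C\|\nabla p\|_{L^2(I;L^2(\Omega))}$, giving exactly the stated estimate.

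Finally, uniqueness of the pair $(u,p)$ follows because uniqueness of $u$ is already contained in Proposition~\ref{prop:navier_stokes_solvability} (the velocity-pressure formulation restricted to $v \in V$, $q=0$ recovers \eqref{eq:nav_stokes_weak}), and once $u$ is fixed, $\nabla p$ is determined by \eqref{eq:nav_stokes_weak_with_pressure} and the normalization $p(t) \in L^2_0(\Omega)$ pins down $p$ uniquely. The regularity $\partial_t u \in L^2(I;L^2(\Omega)^2)$ and $u \in C(\bar I;V)$ are transferred verbatim from \Cref{thm:h2_regularity}. The main obstacle is making the de Rham argument clean at the level of time-dependent functions — i.e.\ checking measurability in $t$ of the constructed pressure and that the pointwise-in-time identity can be integrated against the full test space $L^2(I;H^1_0(\Omega)^2)\times L^2(I;L^2_0(\Omega))$; this is handled by noting that $p(t)$ depends continuously (indeed linearly and boundedly) on $\tilde f(t)$ via the bounded right inverse of the divergence, so $t \mapsto p(t)$ is Bochner measurable with values in $L^2_0(\Omega)$, and the $L^2$-in-time bound then legitimizes the integrated formulation. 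Everything else is a routine application of the already-quoted regularity theorems and standard inequalities.
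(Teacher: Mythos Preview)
Your approach is essentially the same as the paper's: use the already-established regularity $(u\cdot\nabla)u \in L^2(I;L^2(\Omega)^2)$ (the paper cites \Cref{thm:nonlinearity_higher_regularity} with $s=2$, you cite \Cref{rem:LtwoLtwo_nonlinearity}, which amounts to the same thing), view $u$ as a Stokes solution with right-hand side $f - (u\cdot\nabla)u$, read off $\partial_t u, Au \in L^2(I;L^2(\Omega)^2)$, and then construct the pressure by a de~Rham argument. The paper defers this last step to \cite[Theorem~2.10, Corollary~2.11]{behringer_fully_2022}, whereas you spell out the construction directly; the content is the same.

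There is, however, a concrete slip in your definition of $\tilde f$: you set $\tilde f = f - (u\cdot\nabla)u - \partial_t u + \nu\,\mathbb{P}\Delta u$, but the pressure satisfying \eqref{eq:nav_stokes_weak_with_pressure} must obey $\nabla p = f - (u\cdot\nabla)u - \partial_t u + \nu\,\Delta u$, \emph{without} the Helmholtz projection. Indeed, integrating by parts in \eqref{eq:nav_stokes_weak_with_pressure} gives $(\partial_t u - \nu\Delta u + (u\cdot\nabla)u + \nabla p - f,\,v)_{I\times\Omega} = 0$ for all $v \in L^2(I;H^1_0(\Omega)^2)$, so it is the full Laplacian that appears. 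With your $\tilde f$, the resulting $p$ satisfies \eqref{eq:nav_stokes_weak_with_pressure} only up to the extra term $\nu\,((I-\mathbb{P})\Delta u,\,v)_{I\times\Omega}$, which is a nonvanishing gradient contribution for test functions $v$ that are not divergence free. The fix is simply to replace $\mathbb{P}\Delta u$ by $\Delta u$ (legitimate since $u \in L^2(I;H^2(\Omega)^2)$ gives $\Delta u \in L^2(I;L^2(\Omega)^2)$); one still has $(\tilde f(t),v)_\Omega = 0$ for $v \in V$ because the added piece $\nu(I-\mathbb{P})\Delta u$ is itself a gradient and hence orthogonal to $V$. After this correction the rest of your argument, including the $H^1$ bound on $p$ via the pointwise identity $\nabla p = \tilde f$ and Poincar\'e--Wirtinger, goes through unchanged.
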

\begin{proof}
  This result can be shown using \Cref{thm:nonlinearity_higher_regularity} with the choice $s=2$,
  \blue{together } with a bootstrapping
  argument. Moving $(u\cdot \nabla)u \in L^2(I;L^2(\Omega)^2)$
  to the right hand side \blue{and applying \Cref{prop:max_par_reg} yields 
    $\partial_t u, Au \in L^2(I;L^2(\Omega)^2)$. The pressure is then obtained by following the 
    same steps as 
  \cite[Theorem 2.10, Corollary 2.11]{behringer_fully_2022}, where its uniqueness is given by its construction.}
\end{proof}

\section{Discretization}\label{sec:Discretization}
\blue{After discussing the continuous formulation of the Navier-Stokes equations, we now turn towards their
discretization.}
\subsection{Spatial discretization}\label{sect:space_discretization}
Let $\{\mathcal T_h\}$ denote a family of quasi-uniform triangulations of $\overline{\Omega}$
consisting of closed 
simplices. The index $h$ denotes the maximum meshsize.
We discretize the velocity $u$ by a discrete function space $U_h \subset H^1_0(\Omega)^2$
and the pressure $p$ by the discrete space $M_h \subset L^2_0(\Omega)$,
where $(U_h,M_h)$ satisfy the discrete, uniform LBB-condition
\begin{equation*}
  \sup_{v_h \in U_h} \dfrac{(\nabla \cdot v_h, q_h)_\Omega}{\|\nabla v_h\|_{L^2(\Omega)}}
  \ge \beta \|q_h\|_{L^2(\Omega)} \qquad \text{for all } q_h \in M_h,
\end{equation*}
with a constant $\beta > 0$ independent of $h$.
Throughout this work, we will assume the following approximation properties of the spaces $U_h$ and $M_h$.
This assumption is valid, e.g., for Taylor-Hood and \blue{MINI} finite
elements, even on shape regular meshes, see \cite[Assumption 7.2]{behringer_fully_2022}.
\begin{assumption}\label{ass:interpolation_operators}
  There exist interpolation operators $i_h\colon H^2(\Omega)^2 \cap H^1_0(\Omega)^2 \to U_h$
  and $r_h\colon L^2(\Omega) \to M_h$, such that
  \begin{align*}
    \|\nabla(v - i_h v)\|_{L^2(\Omega)} &\le c h \|\nabla^2 v\|_{L^2(\Omega)}
                                        && \hspace{-30mm} \text{for all } v \in H^2(\Omega)^2 \cap H^1_0(\Omega)^2,\\
    \|q - r_h q\|_{L^2(\Omega)} &\le c h \|\nabla q\|_{L^2(\Omega)} 
                                && \hspace{-30mm} \text{for all } q \in H^1(\Omega).
  \end{align*}
\end{assumption}
The (vector valued) discrete Laplacian $\Delta_h: U_h \to U_h$ is defined by
\begin{equation*}
  \Oprod{\nabla u_h,\nabla v_h} = -\Oprod{\Delta_h u_h, v_h} \quad \text{for all } v_h \in U_h.
\end{equation*}
We introduce the space $V_h$ of discretely divergence free functions as
\begin{equation*}
  V_h := \left\lbrace v_h \in U_h : \ \Oprod{\nabla \cdot v_h, q_h} = 0 \ \text{ for all } q_h \in M_h
  \right\rbrace.
\end{equation*}
The $L^2$ projection onto this space will be denoted by $\mathbb P_h\colon  \blue{L^2(\Omega)^2}
\to V_h$, satisfying
\begin{equation*}
  \Oprod{\Ph v , \phi_h} = \Oprod{v,\phi_h} \quad \text{for all }\phi_h \in V_h,
\end{equation*}
and allows us to introduce the discrete Stokes operator $A_h \colon  V_h \to V_h, A_h v_h = - \Ph \Delta_h v_h$.
Having defined these discrete spaces and operators,
we can now consider the Ritz projection for the stationary 
Stokes problem.
For any $(w,r) \in H^1_0(\Omega)^2 \times L^2(\Omega)$, 
the projection $\left(R^S_h(w,r),R^{S,p}_h(w,r)\right) \in U_h \times M_h$ is defined by
\begin{equation}\label{eq:stokes_ritzprojection}
  \begin{aligned}
    \Oprod{\nabla \left(w - R^S_h(w,r)\right),\nabla \phi_h} 
    - \Oprod{r - R^{S,p}_h(w,r),\nabla \cdot \phi_h} &= 0
    \quad \text{for all } \phi_h \in U_h\\
    \Oprod{\nabla \cdot \left(w - R^S_h(w,r)\right),\psi_h} &= 0
    \quad \text{for all } \psi_h \in M_h.
  \end{aligned}
\end{equation}
\blue{In case that} $w$ is discretely divergence free, i.e., $\Oprod{\nabla \cdot w,\psi_h}=0$ 
for all $\psi_h \in M_h$, it holds $R_h^S(w,r) \in V_h$.
Note that the space $V_h$ is in general
not a subspace of the space $V$ of pointwise divergence free 
functions.
This means, that on the discrete space $V_h$, the form $\cO{\cdot,\cdot,\cdot}$ does not posess
the anti symmetry properties shown in \Cref{lemm:c_swap}.
Hence we define, as in \cite{heywood_finite_1986,Chrysafinos2010}, an anti symmetric variant:
\begin{equation}\label{eq:anti_symmetrized_c}
  \hat c: \blue{\Hozs \times \Hozs \times \Hozs} \to \R,
  \quad \chatO{u,v,w} = \frac{1}{2} \cO{u,v,w} - \frac{1}{2} \cO{u,w,v}.
\end{equation}
Analogously to the trilinear form $\cIO{\cdot,\cdot,\cdot}$, we will use the notation
\begin{equation*}
    \chatIO{u,v,w} := \int_I \chatO{u,v,w} \ dt.
\end{equation*}
\begin{remark}
  Note that, due to \Cref{lemm:c_swap}, we can equivalently replace $c(\cdot,\cdot,\cdot)$
  in the continuous formulation of the Navier-Stokes equations \eqref{eq:nav_stokes_weak} by
  $\hat c(\cdot,\cdot,\cdot)$, as the two forms coincide on $V$.
\end{remark}
By its definition, $\chatO{\cdot,\cdot,\cdot}$ now has the following antisymmetric
properties on the space $V_h$, which will later allow us to show the stability of the 
fully discrete solutions.
\begin{lemma}\label{lemm:chat}
  The trilinear form $\chatO{\cdot,\cdot,\cdot}$ satisfies
  \begin{align*}
    \chatO{u_h,v_h,w_h} & \le C\|\nabla u_h\|_{L^2(\Omega)} \|\nabla v_h\|_{L^2(\Omega)} 
    \|\nabla w_h\|_{L^2(\Omega)} && \text{for all } u_h, v_h, w_h \in U_h,\\
    \chatO{u_h, v_h, w_h} &= - \chatO{u_h,w_h,v_h} && \text{for all } u_h, v_h, w_h \in U_h,\\
    \chatO{u_h, v_h, v_h} &= 0 && \text{for all } u_h, v_h \in U_h.
  \end{align*}
\end{lemma}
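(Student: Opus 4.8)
The plan is to prove the three assertions in order, exploiting the definition \eqref{eq:anti_symmetrized_c} of $\hat c$ as the skew-symmetrization of $c$, which makes the last two properties essentially formal.

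First, for the boundedness estimate: I would expand $\chatO{u_h,v_h,w_h} = \tfrac12\cO{u_h,v_h,w_h} - \tfrac12\cO{u_h,w_h,v_h}$ and bound each term separately using the first estimate of \Cref{lemm:c_swap}, namely $\cO{u,v,w} \le \|u\|_{L^4(\Omega)}\|\nabla v\|_{L^2(\Omega)}\|w\|_{L^4(\Omega)}$, which is valid for all $u,v,w \in H^1_0(\Omega)^2 \supseteq U_h$ (this part of \Cref{lemm:c_swap} does not require the divergence-free condition). Then I apply the interpolation inequality \eqref{eq:l4_interpol_general}, $\|v\|_{L^4(\Omega)} \le C\|v\|_{L^2(\Omega)}^{1/2}\|v\|_{H^1(\Omega)}^{1/2}$, together with the Poincar\'e inequality on $H^1_0(\Omega)$ (so that $\|v\|_{H^1(\Omega)}$ is equivalent to $\|\nabla v\|_{L^2(\Omega)}$ on $U_h \subset H^1_0(\Omega)^2$), to get $\|v\|_{L^4(\Omega)} \le C\|\nabla v\|_{L^2(\Omega)}$. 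Combining, each of the two terms is bounded by $C\|\nabla u_h\|_{L^2(\Omega)}\|\nabla v_h\|_{L^2(\Omega)}\|\nabla w_h\|_{L^2(\Omega)}$, and the triangle inequality finishes the estimate.

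Second, the skew-symmetry $\chatO{u_h,v_h,w_h} = -\chatO{u_h,w_h,v_h}$ is immediate from the definition: swapping $v_h$ and $w_h$ in \eqref{eq:anti_symmetrized_c} exchanges the two terms and flips the overall sign. Third, setting $w_h = v_h$ in the skew-symmetry relation gives $\chatO{u_h,v_h,v_h} = -\chatO{u_h,v_h,v_h}$, hence $\chatO{u_h,v_h,v_h} = 0$; alternatively it follows directly since the two terms in the definition cancel when $v_h = w_h$.

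I do not anticipate a genuine obstacle here — the key point, and the only place where real analysis enters, is the boundedness estimate, where one must be careful that only the non-antisymmetry-dependent bound from \Cref{lemm:c_swap} is used (since $U_h \not\subset V$) and that the $L^4$–$H^1$ embedding is routed through Poincar\'e to land on the seminorm $\|\nabla\cdot\|_{L^2(\Omega)}$. Everything else is formal manipulation of the definition.
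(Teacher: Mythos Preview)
Your proposal is correct and follows essentially the same approach as the paper: the last two identities are immediate from the definition of $\hat c$, and the boundedness estimate comes from the first bound in \Cref{lemm:c_swap} together with the embedding $H^1_0(\Omega)\hookrightarrow L^4(\Omega)$. Your explicit mention of Poincar\'e to pass from $\|\cdot\|_{H^1}$ to $\|\nabla\cdot\|_{L^2}$ just spells out what the paper leaves implicit.
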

\begin{proof}
  The last two identities are a direct consequence of the definition of $\chatO{\cdot,\cdot,\cdot}$.
  The first estimate follows from \Cref{lemm:c_swap} and the imbedding 
  $H^1_0(\Omega) \hookrightarrow L^4(\Omega)$.
\end{proof}
Note that due to the above lemma,
formally we are still allowed to switch the second and third argument of $\chatO{\cdot,\cdot,\cdot}$.
The original form $\cO{\cdot,\cdot,\cdot}$ however had a strict disctinction between the two arguments, as it contains
the gradient of the second argument, but only the function values of the third argument.
This is of importance when estimating the form in terms of its arguments.
In $\chatO{\cdot,\cdot,\cdot}$ gradients occur in the second and third argument, thus switching the
arguments does not allow us to obtain improved estimates.
For this reason, we state the following lemma, which allows us to switch the second and third
arguments of $\cO{\cdot,\cdot,\cdot}$ by introducing an additional term,
even if the first argument is not (pointwise) divergence free.
\begin{lemma}\label{lemm:c_swap_no_div}
  Let $u,v,w \in \blue{H^1_0(\Omega)^2}$. Then it holds
  \begin{equation*}
    \cO{u,v,w} 
    = - \cO{u,w,v} - \Oprod{\nabla \cdot u, v \cdot w}.
  \end{equation*}
\end{lemma}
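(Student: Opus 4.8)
The identity $\cO{u,v,w} = -\cO{u,w,v} - \Oprod{\nabla\cdot u, v\cdot w}$ is the "integration by parts up to a divergence term" formula, so the natural route is a direct computation via the divergence theorem, first for smooth compactly supported fields and then by density. I would write out $\cO{u,v,w} = \Oprod{(u\cdot\nabla)v, w} = \sum_{i,j}\int_\Omega u_j (\partial_j v_i) w_i \, dx$. For each term I integrate by parts in the $x_j$ variable: $\int_\Omega u_j (\partial_j v_i) w_i = -\int_\Omega \partial_j(u_j w_i) v_i = -\int_\Omega (\partial_j u_j) w_i v_i - \int_\Omega u_j (\partial_j w_i) v_i$, with no boundary contribution since the fields vanish on $\partial\Omega$. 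Summing over $i,j$, the first group of terms assembles to $-\Oprod{\nabla\cdot u, v\cdot w}$ (using $v\cdot w = \sum_i v_i w_i$) and the second to $-\Oprod{(u\cdot\nabla)w, v} = -\cO{u,w,v}$, which is exactly the claim.

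**Density.** The integration by parts step is immediate for $u,v,w \in C^\infty_0(\Omega)^2$. To pass to general $u,v,w \in H^1_0(\Omega)^2$, I would note that all three trilinear expressions $\cO{u,v,w}$, $\cO{u,w,v}$, and $\Oprod{\nabla\cdot u, v\cdot w}$ are continuous on $H^1_0(\Omega)^2 \times H^1_0(\Omega)^2 \times H^1_0(\Omega)^2$: for the first two this is the bound $\cO{u,v,w} \le \|u\|_{L^4}\|\nabla v\|_{L^2}\|w\|_{L^4}$ from \Cref{lemm:c_swap} combined with the Sobolev embedding $H^1_0(\Omega)\hookrightarrow L^4(\Omega)$; for the third, $\|\nabla\cdot u\|_{L^2}\|v\cdot w\|_{L^2} \le \|\nabla u\|_{L^2}\|v\|_{L^4}\|w\|_{L^4}$, again by Hölder and the same embedding. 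Since $C^\infty_0(\Omega)^2$ is dense in $H^1_0(\Omega)^2$, the identity extends from the smooth case to all of $H^1_0(\Omega)^2$ by approximating each of $u,v,w$ separately and passing to the limit.

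**Main obstacle.** There is no real obstacle here — this is a routine manipulation. The only point requiring a little care is justifying the vanishing of the boundary integral: one cannot literally invoke the classical divergence theorem for $H^1$ functions without a trace argument, which is precisely why I would do the computation for $C^\infty_0$ functions first (where it is elementary) and only then invoke density together with the continuity estimates above. An alternative, if one prefers to avoid density entirely, is to use the known integration-by-parts formula $\Oprod{(u\cdot\nabla)v,w} + \Oprod{(u\cdot\nabla)w,v} + \Oprod{(\nabla\cdot u)v,w} = 0$ valid for $u,v,w \in H^1_0(\Omega)^2$ (this is standard, e.g. a minor variant of \cite[Lemma IX.2.1]{galdi_introduction_2011}), and simply rearrange — but the self-contained smooth-plus-density argument is cleanest to present.
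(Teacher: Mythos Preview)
Your proposal is correct and follows essentially the same approach as the paper, which simply states that the identity is an application of integration by parts (with a reference to \cite{ahmed_higherorder_2021}). Your write-up is considerably more detailed than the paper's one-line proof, spelling out the indexwise computation and the density argument, but the underlying idea is identical.
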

\begin{proof}
  The proof is simply an application of integration by parts, and can also be seen, e.g., from
  \cite[Equation 2.9]{ahmed_higherorder_2021}.
\end{proof}

We conclude this subsection on the space discretization by recalling some important interpolation 
estimates. On the continuous level, applying \eqref{eq:l4_interpol_general} to the first order derivatives,
and using $H^2$ regularity, yields
\begin{equation}\label{eq:w14_interpolation}
  \|\nabla w \|_{L^4(\Omega)} \le C \|\nabla w\|_{L^2(\Omega)}^{\frac{1}{2}}\|A w\|_{L^2(\Omega)}^{\frac{1}{2}}
  \qquad \text{for all } w \in H^2(\Omega)^2.
\end{equation}
Using the Stokes operator on the right hand side, instead of second order derivatives, allows us
to translate this result to the discrete setting. This is facilitated by the following result,
showing that for discretely divergence free functions, the
discrete Laplacian $\Delta_h$ can be bounded in terms of the discrete Stokes operator $A_h$:
\begin{equation}\label{eq:guermond_pasciak}
   \|\Delta_h w_h\|_{L^2(\Omega)} \le C \|A_h w_h \|_{L^2(\Omega)} \qquad \text{for all } w_h \in V_h,
\end{equation}
see \cite[Corollary 4.4]{heywood_finite_1982} or \cite[Lemma 4.1]{guermond_stability_2008}.
With this, we can translate \eqref{eq:w14_interpolation} to the discrete setting,
by considering for some fixed $w_h \in V_h$
the solution $w \in H^1_0(\Omega)^2$ to the continuous problem
\begin{equation*}
\Oprod{\nabla w,\nabla v} = \Oprod{-\Delta_h w_h,v} \qquad \text{for all } v \in H^1_0(\Omega)^2.
\end{equation*}
By the stability of the Poisson Ritz projection in $W^{1,4}(\Omega)$, \eqref{eq:w14_interpolation} and 
\eqref{eq:guermond_pasciak}, we then obtain the discrete version of \eqref{eq:w14_interpolation}.
\begin{equation}\label{eq:discrete_w14_interpolation}
  \|\nabla w_h\|_{L^4(\Omega)} \le 
  C \|\nabla w_h\|_{L^2(\Omega)}^{\frac{1}{2}}\|A_h w_h\|_{L^2(\Omega)}^{\frac{1}{2}}
  \qquad \text{for all } w_h \in V_h.
\end{equation}
Analogously, the Gagliardo-Nirenberg inequality,
\begin{equation}\label{eq:gagliardo_nirenberg}
  \|w\|_{L^\infty(\Omega)} \le C \|w\|_{L^2(\Omega)}^{\frac{1}{2}}\|A w\|_{L^2(\Omega)}^{\frac{1}{2}},
\end{equation}
which is a consequence of \cite[Theorem 3]{adams_cone_1977} together with $H^2$ regularity, has a discrete 
analogon. It can be shown using the standard discrete Gagliardo-Nirenberg inequality
\begin{equation*}
   \|w_h\|_{L^\infty(\Omega)} 
   \le C \|w_h\|_{L^2(\Omega)}^{ \frac{1}{2}} \|\Delta_h w_h\|_{L^2(\Omega)}^{ \frac{1}{2}}
   \qquad \text{for all } w_h \in U_h,
\end{equation*}
which was proven in \cite[Lemma 3.3]{hansbo_strong_2002}. The proof stated there for smooth domains 
remains the same for convex domains.
The discrete version of \eqref{eq:gagliardo_nirenberg} is then again obtained by applying
\eqref{eq:guermond_pasciak} and reads
\begin{equation}\label{eq:discrete_stokes_gagliardo_nirenberg}
   \|w_h\|_{L^\infty(\Omega)} \le C \|w_h\|_{L^2(\Omega)}^{\frac{1}{2}}\|A_h w_h\|_{L^2(\Omega)}^{\frac{1}{2}}
   \qquad \text{for all } w_h \in V_h.
\end{equation}
Straightforward calculations, using the definition of $A_h$, also  give
\begin{equation}\label{eq:discrete_h1_interpolation}
   \|\nabla w_h\|_{L^2(\Omega)} \le C \|w_h\|_{L^2(\Omega)}^{\frac{1}{2}}\|A_h w_h\|_{L^2(\Omega)}^{\frac{1}{2}}
   \qquad \text{for all } w_h \in V_h.
\end{equation}
With these considerations regarding
the spatial discretization, we can now consider the fully discrete
Navier-Stokes equations by also discretizing in time.

\subsection{Temporal discretization}
For discretization in time, we employ the discontinuous Galerkin method of order q (dG(q)),
which is also used, e.g., in \cite{behringer_fully_2022,Chrysafinos2010,eriksson_time_1985}.
The time interval $I = (0,T]$ is partitioned into $M$ half-open sub-intervals $I_m = (t_{m-1},t_m]$ with
$0=t_0 < t_1 < t_2 < ... < t_M = T$. We denote each timestep by $k_m = t_m - t_{m-1}$ and for fixed
$M$ the maximal timestep by $k:= \max_{1\le m \le M} k_m$, as well as the minimal one by 
$k_{\min} := \min_{1\le m \le M} k_m$.
If we want to emphasize that $I_m$ belongs to a discretization level $k$, we denote it by $I_{m,k}$.
We make some standard assumptions on the properties on the time discretization:
\begin{enumerate}
  \item There are constants $C,\beta > 0$ independent of $k$, such that 
    \begin{equation*}
      k_{\min} \ge C k^\beta.
    \end{equation*}
  \item There is a constant $\kappa > 0$ independent of $k$, such that for all $m=1,2,...,M-1$
    \begin{equation*}
      \kappa^{-1} \le \frac{k_m}{k_{m-1}} \le \kappa.
    \end{equation*}
  \item It holds $k \le \frac{T}{4}$.
\end{enumerate}
A dG(q) function with values in a given Banach space $\mathcal B$ is then given as a function
in the space
\begin{equation*}
  X^q_k(\mathcal B):= 
  \MathSet{ v \in L^2(I;\mathcal B): \  
  v|_{I_m} \in \mathcal P_q(I_m;\mathcal B) \quad \text{for all } 1 \le m \le M },
\end{equation*}
where on each $I_m$ the space $\mathcal P_q(I_m;\mathcal B)$ is given as the space of polynomials 
in time up to degree $q$ with values in $\mathcal B$:
\begin{equation*}
  \mathcal P_q(I_m;\mathcal B) = \MathSet{ v \in L^2(I_m;\mathcal B) : \
  \exists \ v_0,...,v_q \in \mathcal B \ \text{s.t.} \ v = \sum_{j=0}^q v_j t^j }.
\end{equation*}
Note that no continuity is required at the time nodes $t_m$, which is why
we use the following standard notations for one sided limits and jump terms:
\begin{equation*}
  v_m^+ : = \lim_{\varepsilon \to 0^+} v(t_m + \varepsilon), \qquad
  v_m^- : = \lim_{\varepsilon \to 0^+} v(t_m - \varepsilon), \qquad
  [v]_m : = v_m^+ - v_m^-.
\end{equation*}
We introduce the compact notations
\begin{equation*}
  \Vkh := X^q_k(V_h), \qquad
  \Ukh := X^q_k(U_h), \qquad
  \Mkh := X^q_k(M_h), \qquad
  \Xkh := \Ukh \times \Mkh.
\end{equation*}
Having defined these dG spaces, we introduce the following projection operator in time:
$\pi_\tau\colon C(I;L^2(\Omega)) \to X^q_k(L^2(\Omega))$ defined by
\begin{equation}\label{eq:dg_time_projection}
  \left\{
\begin{aligned}
  \ImOprod{\pi_\tau v - v,\varphi} &= 0 \qquad \quad \text{for all } \varphi \in \mathcal P_{q-1}(I_m;L^2(\Omega)), 
  \text{ if } q>0,\\
  \pi_\tau v(t_m^-) &= v(t_m^-),
\end{aligned}
 \right.
 \end{equation}
for all $m=1,2,...,M$. In case $q=0$, the projection operator is defined solely by the second condition.

\begin{remark}
  In this paper we will restrict ourselves to the two lowest cases $q=0$ and $q=1$.
  Since we work in a setting of low regularity of the right hand side $f$, 
  the error estimates would not benefit from higher order schemes.
  Also, since the Navier-Stokes equations already pose a challenging large system to solve,
  higher order schemes in many applications are not feasible from the standpoint of computational cost.
\end{remark}
We can now introduce the time-discretized formulation of 
the Navier-Stokes equations.
We define the time-discrete bilinear form for the transient Stokes equations as in 
\cite{behringer_fully_2022} by
\begin{equation*}
  \mathfrak B (u,v) := \sum_{m=1}^M \ImOprod{\partial_t u, v} + \nu\IOprod{\nabla u, \nabla v}
  + \sum_{m=2}^M \Oprod{[u]_{m-1},v^+_{m-1}} + \Oprod{u_0^+,v_0^+}.
\end{equation*}
Since we frequently will test some discrete equations with their respective solutions,
let us recall the following lemma.
\begin{lemma}\label{lemm:jump_reorder}
  For any $\vk\in X_k^q(L^2(\Omega))$ it holds
  \begin{align*}
  \ImOprod{\partial_t \vk,\vk} + \Oprod{[\vk]_{m-1},v_{kh,m-1}^+}
  &= \frac{1}{2} \left(\|v_{k,m}^-\|_{L^2(\Omega)}^2  + \|[\vk]_{m-1}\|_{L^2(\Omega)}^2 - \|v_{k,m-1}^-\|_{L^2(\Omega)}^2 \right),\\
  - \ImOprod{\vk, \partial_t \vk} - \Oprod{v_{k,m}^-, [\vk]_{m}}
  &= \frac{1}{2} \left(\|v_{k,m-1}^+\|_{L^2(\Omega)}^2  + \|[\vk]_{m}\|_{L^2(\Omega)}^2 - \|v_{k,m}^+\|_{L^2(\Omega)}^2 \right).
  \end{align*}
\end{lemma}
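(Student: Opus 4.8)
The identity to prove is the standard "integration by parts in time on a single subinterval" relation for dG functions. My plan is to work separately on each subinterval $I_m = (t_{m-1},t_m]$ and to treat the two identities symmetrically; in fact the second one follows from the first by reversing the direction of time, so I would concentrate on the first and then indicate the analogous computation for the second.

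For the first identity, I would start from the observation that on $I_m$ the map $t \mapsto \tfrac12\|\vk(t)\|_{L^2(\Omega)}^2$ is a scalar polynomial in $t$, and the fundamental theorem of calculus gives
\[
  \ImOprod{\partial_t \vk, \vk} = \int_{I_m} \Oprod{\partial_t \vk(t), \vk(t)} \, dt
  = \tfrac12 \|v_{k,m}^-\|_{L^2(\Omega)}^2 - \tfrac12 \|v_{k,m-1}^+\|_{L^2(\Omega)}^2,
\]
using that $\vk$ is continuous \emph{inside} $I_m$ so that $\vk(t_{m-1}^+) = v_{k,m-1}^+$ and $\vk(t_m^-) = v_{k,m}^-$ are the relevant endpoint values. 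Then I would add the jump term $\Oprod{[\vk]_{m-1}, v_{k,m-1}^+} = \Oprod{v_{k,m-1}^+ - v_{k,m-1}^-, v_{k,m-1}^+} = \|v_{k,m-1}^+\|_{L^2(\Omega)}^2 - \Oprod{v_{k,m-1}^-, v_{k,m-1}^+}$. Combining,
\[
  \ImOprod{\partial_t \vk, \vk} + \Oprod{[\vk]_{m-1}, v_{k,m-1}^+}
  = \tfrac12\|v_{k,m}^-\|_{L^2(\Omega)}^2 + \tfrac12\|v_{k,m-1}^+\|_{L^2(\Omega)}^2 - \Oprod{v_{k,m-1}^-, v_{k,m-1}^+}.
\]
The final step is purely algebraic: the polarization identity $\tfrac12\|a\|^2 - \Oprod{b,a} + \tfrac12\|b\|^2 = \tfrac12\|a-b\|^2$ applied with $a = v_{k,m-1}^+$, $b = v_{k,m-1}^-$ turns $\tfrac12\|v_{k,m-1}^+\|^2 - \Oprod{v_{k,m-1}^-, v_{k,m-1}^+}$ into $\tfrac12\|[\vk]_{m-1}\|^2 - \tfrac12\|v_{k,m-1}^-\|^2$, which yields exactly the claimed right-hand side.

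For the second identity I would argue in the same way: $-\ImOprod{\vk, \partial_t \vk} = -\tfrac12\|v_{k,m}^-\|_{L^2(\Omega)}^2 + \tfrac12\|v_{k,m-1}^+\|_{L^2(\Omega)}^2$, then add $-\Oprod{v_{k,m}^-, [\vk]_m} = -\Oprod{v_{k,m}^-, v_{k,m}^+ - v_{k,m}^-} = \|v_{k,m}^-\|_{L^2(\Omega)}^2 - \Oprod{v_{k,m}^-, v_{k,m}^+}$, and apply polarization with $a = v_{k,m}^-$, $b = v_{k,m}^+$ to get $\tfrac12\|[\vk]_m\|^2 - \tfrac12\|v_{k,m}^+\|^2$ out of the remaining terms. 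I do not expect any genuine obstacle here; the only points that require a little care are bookkeeping the one-sided limits correctly (the value of $\vk$ at the right endpoint of $I_m$ is $v_{k,m}^-$, not $v_{k,m}^+$, and similarly at the left endpoint) and making sure the jump term is expanded with the correct sign convention $[\vk]_m = v_{k,m}^+ - v_{k,m}^-$ from the definitions. No properties of $\vk$ beyond being a (piecewise) polynomial in time with values in $L^2(\Omega)$ are used, so the hypothesis $\vk \in X_k^q(L^2(\Omega))$ is exactly what is needed.
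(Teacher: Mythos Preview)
Your proof is correct and follows essentially the same approach as the paper: compute $\ImOprod{\partial_t \vk,\vk} = \tfrac12\|v_{k,m}^-\|^2 - \tfrac12\|v_{k,m-1}^+\|^2$ via the fundamental theorem of calculus, add the jump term, and algebraically recombine. The paper phrases the final recombination as ``write $v_{k,m-1}^+ = [\vk]_{m-1} + v_{k,m-1}^-$ and expand'' rather than invoking the polarization identity, but this is the same computation.
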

\begin{proof}
  For the first equality, we can express the integral over the time derivatives via
  \begin{equation*}
   \ImOprod{\partial_t \vk,\vk} =  \frac{1}{2} \|v_{k,m}^-\|^2 - \frac{1}{2} \|v_{k,m-1}^+\|^2.
  \end{equation*}
  Writing $v_{k,m-1}^+ = [v]_{k,m-1} + v_{k,m-1}^-$ and recombining terms gives the first identity.
  The proof of the second equality works completely anologous.
\end{proof}
The fully discrete formulation of the transient Navier-Stokes equations, using the anti symmetrized trilinear 
form $\chatO{\cdot,\cdot,\cdot}$, introduced in \eqref{eq:anti_symmetrized_c}, is \blue{now} given as:
Find $\ukh \in \Vkh$, such that 
\begin{equation}\label{eq:nav_stokes_discrete}
  \mathfrak B (\ukh,\vkh) + \chatIO{\ukh,\ukh,\vkh} = \IOprod{f,\vkh} + \Oprod{u_0,v_{kh,0}^+} 
  \quad \text{for all } \vkh \in \Vkh.
\end{equation}
Since in general $V_h \not \subset V$, the discrete solution $\ukh \in \Vkh$ is not divergence free,
and thus, we are not allowed to 
use it as a test function for the divergence-free continuous formulation \eqref{eq:nav_stokes_weak}.
Because of this, we introduce an equivalent formulation with pressure:
Find $(\ukh,\pkh) \in \Xkh$, such that for all $(\vkh,\qkh) \in \Xkh$
it holds
\begin{equation}\label{eq:nav_stokes_discrete_with_pressure}
  B((\ukh,\pkh),(\vkh,\qkh)) + \chatIO{\ukh,\ukh,\vkh} = \IOprod{f,\vkh} + \Oprod{u_0,v_{kh,0}^+},
\end{equation}
where the mixed bilinear form $B$ is defined by
\begin{align*}
  B ((u,p),(v,q)) := & \sum_{m=1}^M \ImOprod{\partial_t u, v} + \nu\IOprod{\nabla u, \nabla v}
  + \sum_{m=2}^M \Oprod{[u]_{m-1},v^+_{m-1}} + \Oprod{u_0^+,v_0^+} \\
                    & - \IOprod{\nabla \cdot v, p} + \IOprod{\nabla \cdot u, q}.
\end{align*}
\begin{theorem}
  The two formulations \eqref{eq:nav_stokes_discrete} and \eqref{eq:nav_stokes_discrete_with_pressure}
  are equivalent in the sense that, if $\ukh \in \Vkh$ satisfies 
  \eqref{eq:nav_stokes_discrete}, then there exists $\pkh \in \Mkh$ such that 
  $(\ukh,\pkh)$ solves \eqref{eq:nav_stokes_discrete_with_pressure}.
  Conversely, if $(\ukh,\pkh)$ satisfies \eqref{eq:nav_stokes_discrete_with_pressure},
  then $\ukh$ is an element of $\Vkh$ and satisfies \eqref{eq:nav_stokes_discrete}.
\end{theorem}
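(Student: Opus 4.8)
The statement is the standard equivalence between the divergence-free formulation \eqref{eq:nav_stokes_discrete} and the velocity–pressure formulation \eqref{eq:nav_stokes_discrete_with_pressure} of the fully discrete scheme. The key observation is that the bilinear form $B$ restricted to test functions in $\Vkh \times \Mkh$ reduces, up to the pressure coupling terms, to $\mathfrak B$: for $v_{kh} \in \Vkh$ we have $\IOprod{\nabla\cdot v_{kh},p_{kh}}=0$ by definition of $V_h$, and for $q_{kh}\in \Mkh$ the term $\IOprod{\nabla\cdot u_{kh},q_{kh}}$ vanishes precisely when $u_{kh}\in\Vkh$. So the nonlinear term plays no role in the argument; it is carried along unchanged since it only involves the velocity. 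The heart of the matter is a discrete inf-sup / surjectivity argument for the pressure, exactly as in the stationary case.

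\emph{First direction.} Suppose $u_{kh}\in\Vkh$ solves \eqref{eq:nav_stokes_discrete}. Testing \eqref{eq:nav_stokes_discrete_with_pressure} with $(v_{kh},q_{kh})=(0,q_{kh})$ gives $\IOprod{\nabla\cdot u_{kh},q_{kh}}=0$ for all $q_{kh}\in\Mkh$, which holds automatically since $u_{kh}\in\Vkh$ on every time slab. It remains to produce $p_{kh}\in\Mkh$ with
\begin{equation*}
  \IOprod{\nabla\cdot v_{kh},p_{kh}} = \mathfrak B(u_{kh},v_{kh}) + \chatIO{u_{kh},u_{kh},v_{kh}} - \IOprod{f,v_{kh}} - \Oprod{u_0,v_{kh,0}^+}
\end{equation*}
for all $v_{kh}\in\Ukh$. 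The right-hand side, call it $F(v_{kh})$, defines a linear functional on $\Ukh$ that vanishes on $\Vkh$ by \eqref{eq:nav_stokes_discrete}. Because the time-discrete problem decouples slab-by-slab, one works on each $I_m$ separately: $F$ restricted to $\mathcal P_q(I_m;U_h)$ vanishes on $\mathcal P_q(I_m;V_h)$, and the uniform discrete LBB condition on $(U_h,M_h)$ — which passes to the tensor-product spaces $\mathcal P_q(I_m;U_h)\times\mathcal P_q(I_m;M_h)$ since time is just a finite-dimensional parameter — guarantees that $v_h\mapsto \nabla\cdot v_h$ maps $U_h$ onto $M_h^*$ with kernel $V_h$. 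Hence $F|_{I_m}$ factors through $\nabla\cdot$, yielding a unique $p_{kh}|_{I_m}\in\mathcal P_q(I_m;M_h)$ representing it; assembling over $m$ gives $p_{kh}\in\Mkh$.

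\emph{Converse direction.} If $(u_{kh},p_{kh})\in\Xkh$ solves \eqref{eq:nav_stokes_discrete_with_pressure}, testing with $(0,q_{kh})$ for all $q_{kh}\in\Mkh$ forces $\IOprod{\nabla\cdot u_{kh},q_{kh}}=0$, i.e. $u_{kh}(t)\in V_h$ for a.e. $t$, so $u_{kh}\in\Vkh$. Then for any $v_{kh}\in\Vkh$ we have $\IOprod{\nabla\cdot v_{kh},p_{kh}}=0$, and \eqref{eq:nav_stokes_discrete_with_pressure} collapses to \eqref{eq:nav_stokes_discrete}.

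\emph{Main obstacle.} The only non-routine point is justifying that the stationary discrete LBB condition lifts to the dG-in-time setting, i.e. that a functional on $\Ukh$ vanishing on $\Vkh$ is of the form $v_{kh}\mapsto\IOprod{\nabla\cdot v_{kh},p_{kh}}$ for some $p_{kh}\in\Mkh$. This is handled by exploiting the slab decoupling and the fact that $\mathcal P_q(I_m;U_h)\cong U_h^{q+1}$ is finite-dimensional, so the surjectivity of $\nabla\cdot\colon U_h\to M_h^*$ with kernel $V_h$ is inherited componentwise in a time basis (e.g. Legendre polynomials on $I_m$); the $t$-dependence of the functional $F$ is matched by choosing the pressure coefficients in the same basis. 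No quantitative inf-sup constant is needed here, only the algebraic fact that $\ker(\nabla\cdot|_{U_h}) = V_h$, so I would phrase the argument purely in terms of this kernel/range relationship rather than invoking the inf-sup inequality with its constant.
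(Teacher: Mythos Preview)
Your argument is correct and is precisely the standard LBB-based construction that the paper defers to by citing \cite[Proposition 4.3]{behringer_fully_2022}; the only addition beyond the Stokes case is carrying the trilinear term along unchanged, which you note explicitly. Your slab-by-slab decoupling is justified because $\Ukh = \bigoplus_m \mathcal P_q(I_m;U_h)$ (no continuity across time nodes), so the functional $F$ indeed splits componentwise and the spatial surjectivity $\nabla\cdot\colon U_h \to M_h^*$ with kernel $V_h$ is all that is needed.
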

\begin{proof}
  This can be shown by using the same arguments as \cite[Proposition 4.3]{behringer_fully_2022}.
\end{proof}

\subsection{Stokes error estimates}
\blue{Before further analyzing the fully discrete Navier-Stokes equations, let us recall the 
  discrete formulation of the Stokes equations \eqref{eq:stokes_weak_pressure},
  and the available error estimates that we wish to extend to the
  Navier-Stokes equations in this work.}
  The fully discrete formulation \blue{of the instationary Stokes equation} reads:
  Find $(\wkh,r_{kh}) \in \Xkh$ satisfying
\begin{equation}\label{eq:stokes_discrete}
  B((\wkh,r_{kh}),(\phikh,\psikh)) = \Oprod{u_0,\phi_{kh,0}^+} + \IOprod{f,\phikh} 
  \qquad \text{for all } (\phikh,\psikh) \in \Xkh.
\end{equation}
  In the recent contributions \cite{behringer_fully_2022,vexler_l2i_2023}, 
  best approximation type error estimates for the Stokes problem 
  were shown in the norms of $L^\infty(I;L^2(\Omega))$, $L^2(I;L^2(\Omega))$ and $L^2(I;H^1(\Omega))$.
  There hold the following results, formulated in terms of best approximation error terms and the
  errors of the projection in time $\pi_\tau$, defined in \eqref{eq:dg_time_projection},
  and the Stokes Ritz projection $R_h^S$, defined in \eqref{eq:stokes_ritzprojection}.
\begin{proposition}[{\cite[Theorems 6.1\& 6.3]{vexler_l2i_2023}}]\label{thm:stokesl2l2_l2h1}
  Let $f \in L^2(I;L^2(\Omega)^2)$ and $u_0 \in V$, let $(w,r)$ and $(\wkh,r_{kh})$ be the continuous
   and fully discrete solutions to the Stokes problems \eqref{eq:stokes_weak_pressure}
   and \eqref{eq:stokes_discrete}. Then for any $\chi_{kh} \in \Vkh$, there holds
   \begin{align*}
     \|w-\wkh\|_{L^2(I\times \Omega))} & \le C \left(\| w - \chi_{kh}\|_{L^2(I\times\Omega)} 
     + \|w - \pi_\tau w\|_{L^2(I\times \Omega)} + \|w - R_h^S(w,r)\|_{L^2(I\times \Omega)}\right)\\
     \|\nabla(w-\wkh)\|_{L^2(I\times \Omega))} & \le C\left( \| \nabla(w - \chi_{kh})\|_{L^2(I\times\Omega)} 
     + \|\nabla(w - \pi_\tau w)\|_{L^2(I\times \Omega)} + \|\nabla(w - R_h^S(w,r))\|_{L^2(I\times \Omega)}
   \right).
   \end{align*}
\end{proposition}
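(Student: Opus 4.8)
Since \Cref{thm:stokesl2l2_l2h1} concerns the \emph{linear} Stokes problem, the plan is to combine Galerkin orthogonality of the dG--mixed finite element scheme with an error splitting built from the temporal dG projection \(\pi_\tau\) and the spatial Stokes Ritz projection \(R^S_h\). First I would note consistency: by \Cref{prop:max_par_reg} with \(s=2\) (using \(u_0\in V\hookrightarrow V_{1/2}\)) the solution satisfies \(w\in\LtwoHtwo\), \(\partial_t w\in\LtwoLtwo\) and \(r\in\LtwoHone\) with values in \(L^2_0(\Omega)\), so \(B((w,r),(\cdot,\cdot))\) is well defined on \(\Xkh\), and since \(w\) is continuous in time the scheme \eqref{eq:stokes_discrete} is consistent; hence
\[
  B\bigl((w-\wkh,\;r-r_{kh}),(\phikh,\psikh)\bigr)=0 \qquad\text{for all }(\phikh,\psikh)\in\Xkh .
\]
Writing \(R^S_h w\) for the \(V_h\)-valued function \(t\mapsto R^S_h(w(t),r(t))\) (which lies in \(V_h\) because \(w(t)\) is divergence free, hence discretely divergence free) and using that \(\pi_\tau\) and \(R^S_h\) act on disjoint variables, I would split \(w-\wkh=\eta+\xi\) with \(\eta:=w-\pi_\tau R^S_h w\) and \(\xi:=\pi_\tau R^S_h w-\wkh\in\Vkh\). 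Since \(\pi_\tau R^S_h w=R^S_h\pi_\tau w\), one has \(\eta=(w-R^S_h w)+R^S_h(w-\pi_\tau w)\), so by the \(L^2\)- and \(H^1\)-stability of \(R^S_h\) the term \(\eta\) is controlled, in both the \(\LtwoLtwo\) and \(\LtwoHoned\) norms, by \(\|w-\pi_\tau w\|\) and \(\|w-R^S_h(w,r)\|\); the best-approximation term \(\inf_{\chi_{kh}\in\Vkh}\|w-\chi_{kh}\|\) enters by comparing the discrete part \(\xi\) to an arbitrary \(\chi_{kh}\).

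For the gradient estimate I would test the error equation with \((\xi,0)\), \(\xi\in\Vkh\). Because \(\xi\) is discretely divergence free one has \(\IOprod{\nabla\cdot\xi,r_{kh}}=0\) and \(\nabla\cdot\xi(t)\perp M_h\), so the error equation reduces to \(\mathfrak{B}(\xi,\xi)=-\mathfrak{B}(\eta,\xi)+\IOprod{\nabla\cdot\xi,\,r-\rho_{kh}}\) for any \(\rho_{kh}\in\Mkh\), while \Cref{lemm:jump_reorder} yields the coercivity \(\mathfrak{B}(\xi,\xi)\ge\nu\|\nabla\xi\|_{\LtwoLtwo}^2\). I would then dispose of \(\mathfrak{B}(\eta,\xi)\) using the two projection properties: the \(L^2\)-in-time orthogonality of \(w-\pi_\tau w\) to \(\mathcal{P}_{q-1}(I_m)\) together with \(\pi_\tau v(t_m^-)=v(t_m^-)\) makes, after summation by parts in time, the time-derivative and jump contributions of the temporal part of \(\eta\) cancel, leaving only \(\nu\IOprod{\nabla(w-\pi_\tau w),\nabla\xi}\); and the elliptic equations \eqref{eq:stokes_ritzprojection} defining \(R^S_h\), again combined with \(\nabla\cdot\xi(t)\perp M_h\), remove the gradient contribution of the spatial part of \(\eta\) and, via the discrete inf--sup condition together with the stability of the pressure component \(R^{S,p}_h\), bound the residual pressure pairing by \(\|\nabla(w-R^S_h(w,r))\|_{\LtwoLtwo}\)-type quantities. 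Cauchy--Schwarz, Young's inequality (absorbing \(\|\nabla\xi\|^2\)) and \(\|\nabla(w-\wkh)\|\le\|\nabla\eta\|+\|\nabla\xi\|\) then close the first estimate.

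For the \(\LtwoLtwo\) bound I would run an Aubin--Nitsche duality argument; it suffices to bound \(\|\xi\|_{\LtwoLtwo}\). Given \(\xi\), let \((z,\varpi)\) solve the backward Stokes problem \(-\partial_t z-\nu\Delta z+\nabla\varpi=\xi\), \(\nabla\cdot z=0\) in \(I\times\Omega\), \(z(T)=0\); being itself a Stokes problem, it enjoys maximal parabolic regularity, \(\|\partial_t z\|_{\LtwoLtwo}+\|z\|_{\LtwoHtwo}+\|\varpi\|_{\LtwoHone}\le C\|\xi\|_{\LtwoLtwo}\). Representing \(\|\xi\|_{\LtwoLtwo}^2\) by testing this equation with \(w-\wkh\) (and subtracting \(\IOprod{\eta,\xi}\)), rewriting the integrations by parts in time and space so that \(\mathfrak{B}\) reappears (here using that \(z\) is continuous in time and that \(\nabla\cdot(w-\wkh)\perp M_h\)), and inserting the discrete dual \(\pi_\tau R^S_h z\in\Vkh\) by Galerkin orthogonality, one is left with pairings of the projection errors \(z-\pi_\tau R^S_h z\) and \(\varpi-\rho_{kh}\) of the dual solution against \(w-\wkh\) and \(\eta\); estimating the dual factors by \(C\|\xi\|_{\LtwoLtwo}\) (via the regularity bound, \Cref{ass:interpolation_operators}, and the approximation properties of \(\pi_\tau\) and \(R^S_h\)), the velocity factors by the gradient estimate just proved, and dividing by \(\|\xi\|_{\LtwoLtwo}\), gives \(\|\xi\|_{\LtwoLtwo}\); then \(\|w-\wkh\|_{\LtwoLtwo}\le\|\eta\|_{\LtwoLtwo}+\|\xi\|_{\LtwoLtwo}\) is of the claimed form.

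The step I expect to be the main obstacle is the bookkeeping of the pressure: arranging that the pairings involving \(r\) (in the energy estimate) and \(\varpi\) (in the duality estimate) are absorbed into the three velocity quantities on the right-hand side, rather than spawning a genuinely new pressure best-approximation term. This is what forces the combined use of the discrete inf--sup condition and of the pressure-aware Stokes Ritz projection \(R^S_h(w,r)\), and in the duality argument it is where maximal parabolic regularity of the backward problem must be exploited sharply --- so that, in contrast to the \(\LinfLtwo\) estimate of \cite{behringer_fully_2022}, no factor \(\ln(T/k)\) is produced.
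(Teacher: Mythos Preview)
The paper does not give its own proof of this proposition: it is quoted verbatim from \cite[Theorems 6.1 \& 6.3]{vexler_l2i_2023} and used as a black box, so there is no argument in the present paper to compare your proposal against.

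That said, your sketch follows the standard and essentially correct route for such linear Stokes estimates: Galerkin orthogonality, an error splitting through the intermediate \(\pi_\tau R^S_h w\in\Vkh\), an energy argument for the \(L^2(I;H^1(\Omega))\) bound, and an Aubin--Nitsche duality for the \(L^2(I\times\Omega)\) bound. The commutation \(\pi_\tau R^S_h w=R^S_h\pi_\tau w\) you use is legitimate since the two projections act on independent variables, and you are right that the main technical care is in the pressure bookkeeping, which is precisely why the pressure-aware Ritz projection \(R^S_h(w,r)\) appears rather than a pure velocity projection. One small caveat: when you invoke ``\(H^1\)-stability of \(R^S_h\)'' to bound \(R^S_h(w-\pi_\tau w)\), note that \(R^S_h\) takes the pair \((w,r)\) as argument, so the relevant stability is that of \((w,r)\mapsto R^S_h(w,r)\) applied to \((w-\pi_\tau w,\,r-\pi_\tau r)\); this still works but requires a word on the pressure component. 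Your identification of the no-logarithm issue (contrasting with the \(L^\infty(I;L^2(\Omega))\) result) is also to the point: in the \(L^2\)-in-time duality the backward Stokes problem has an \(L^2\) right-hand side, so ordinary maximal parabolic regularity suffices and no smoothing/logarithmic estimate is needed.
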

\begin{proposition}[{\cite[Corollary 6.4]{behringer_fully_2022}}]\label{prop:stokes_linfty}
  Let $f \in L^s(I;L^2(\Omega)^2)$ and $u_0 \in V_{1-1/s}$ for some $s>1$, and let $(w,r)$ and $(\wkh,r_{kh})$
  be the continuous and fully discrete solutions to the Stokes equations \eqref{eq:stokes_weak_pressure} and
  \eqref{eq:stokes_discrete}. Then for any $\chi_{kh} \in \Vkh$, there holds
  \begin{equation*}
    \|w - \wkh\|_{L^\infty(I;L^2(\Omega))}
    \le C \left(\ln \frac{T}{k}\right)
    \left( \|w - \chi_{kh}\|_{\LinfLtwo} + \|w - R_h^S(w,r)\|_{\LinfLtwo}\right).
  \end{equation*}
\end{proposition}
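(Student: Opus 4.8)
\emph{Proof idea.} The plan is to combine an error splitting with Galerkin orthogonality and a discrete duality argument, in the spirit of pointwise‑in‑time estimates for parabolic problems. I would write $w - w_{kh} = (w - \chi_{kh}) + e_{kh}$ with $e_{kh} := \chi_{kh} - w_{kh} \in \Vkh$; since the first summand is already one of the terms on the right‑hand side of the claimed estimate, it suffices to bound $\|e_{kh}\|_{\LinfLtwo}$. The dG scheme \eqref{eq:stokes_discrete} is consistent: under the regularity granted by \Cref{prop:max_par_reg} the continuous pair $(w,r)$ may be inserted into $B(\cdot,\cdot)$ — the jump and initial terms behave correctly because $w \in C(\bar I;L^2(\Omega)^2)$ — so that $B((w - w_{kh}, r - r_{kh}),(\phi_{kh},\psi_{kh})) = 0$ for all $(\phi_{kh},\psi_{kh}) \in \Xkh$. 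Testing with $\phi_{kh} \in \Vkh$ and $\psi_{kh} = 0$ and using $(\nabla\cdot\phi_{kh}, q_h)_\Omega = 0$ for all $q_h \in M_h$, the discrete pressure $r_{kh}$ drops out, leaving $\mathfrak B(w - w_{kh},\phi_{kh}) = (\nabla\cdot\phi_{kh}, r)_{I\times\Omega}$. Inserting the stationary Stokes Ritz projection $R^S_h(w,r)$ (composed with the time projection $\pi_\tau$), the elliptic and pressure contributions cancel by design of \eqref{eq:stokes_ritzprojection} — this is exactly why $R^S_h$, and not a plain $L^2$‑projection, must appear in the estimate — and, using the defining properties of $\pi_\tau$ from \eqref{eq:dg_time_projection}, I would rewrite the error identity so that its right‑hand side involves only the projection errors $w - R^S_h(w,r)$ and $w - \chi_{kh}$, paired against $\phi_{kh}$, its time derivative, and its jumps.

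To convert a bound on $\mathfrak B(e_{kh},\cdot)$ into one on $\|e_{kh}\|_{\LinfLtwo}$, I would run a discrete duality argument. Fix $\bar t \in \bar I$ where $t \mapsto \|e_{kh}(t)\|_{L^2(\Omega)}$ is essentially maximal (attained, as $e_{kh}$ is piecewise polynomial in time) and introduce the associated backward‑in‑time discrete dual Stokes problem: find $z_{kh} \in \Vkh$ with $\mathfrak B(\phi_{kh}, z_{kh}) = \IOprod{\phi_{kh},\theta}$ for all $\phi_{kh}\in\Vkh$, where $\theta$ is a mollified Dirac in time concentrated near $\bar t$, multiplied by $e_{kh}(\bar t)/\|e_{kh}(\bar t)\|_{L^2(\Omega)}$ (or a suitable point functional at a time node, depending on $q$). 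Choosing $\phi_{kh} = e_{kh}$ gives $\|e_{kh}(\bar t)\|_{L^2(\Omega)} \approx \mathfrak B(e_{kh}, z_{kh})$, which by the error identity is a sum of terms in which a projection error is paired against $z_{kh}$, $\partial_t z_{kh}$, or the jumps $[z_{kh}]_m$; each of these I would estimate by the $\LinfLtwo$ norm of the projection error times an $L^1(I;L^2(\Omega))$‑type norm of the corresponding quantity built from $z_{kh}$.

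The crux is then a stability (smoothing) estimate for the discrete dual problem of the form $\|z_{kh}\|_{L^1(I;L^2(\Omega))} + \|\partial_t z_{kh}\|_{L^1(I;L^2(\Omega))} + \sum_{m}\|[z_{kh}]_m\|_{L^2(\Omega)} \le C\ln(T/k)$, uniformly in $h$ and $k$ — the discrete analogue of maximal parabolic regularity for the Stokes semigroup at the limiting time‑integrability exponent $1$, where the logarithmic loss is sharp (cf. \cite{leykekhman_discrete_2017}). This is where the factor $\ln(T/k)$ in the statement originates, and I expect it to be the main obstacle; its proof should rest on $H^2$‑regularity of the Stokes operator, the bound \eqref{eq:guermond_pasciak} relating $\Delta_h$ and $A_h$ on $V_h$, and discrete resolvent/semigroup estimates. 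Granting this, I would collect the pieces, use the $\LinfLtwo$‑stability of $R^S_h$ and of $\pi_\tau$ to pass from $w - \pi_\tau R^S_h(w,r)$ back to the two error terms $w - \chi_{kh}$ and $w - R^S_h(w,r)$, and finally take the infimum over $\chi_{kh} \in \Vkh$ to conclude.
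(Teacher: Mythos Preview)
The paper does not give its own proof of this proposition: it is quoted verbatim as \cite[Corollary~6.4]{behringer_fully_2022} and used as a black box. There is therefore nothing in the present paper to compare your argument against.

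That said, your sketch is broadly in line with how the cited result is obtained in \cite{behringer_fully_2022}: an error splitting combined with Galerkin orthogonality, a backward-in-time discrete dual Stokes problem driven by a regularized point-in-time functional (the same device that reappears later in the proof of \Cref{thm:nav_stokes_bestapprox}), and a discrete smoothing / maximal parabolic regularity estimate for the dual solution in $L^1$-in-time norms, which is where the factor $\ln(T/k)$ enters (cf.\ \cite{leykekhman_discrete_2017}). One point where your outline is slightly off: in \cite{behringer_fully_2022} the splitting is not through an arbitrary $\chi_{kh}$ but through the space--time Stokes Ritz projection, so that the Galerkin orthogonality for $B$ applies directly to the ``$\xi$''-part and no separate handling of $\pi_\tau$ is needed; the best-approximation term $\|w-\chi_{kh}\|_{\LinfLtwo}$ then enters only at the end via the $\LinfLtwo$-stability of the fully discrete Stokes solution operator. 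Apart from this organizational detail, the strategy you describe is the right one.
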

Having shown these error estimates for the Stokes equations, the natural question arises,
whether these results can be extended to the Navier-Stokes equations.
We first give a positive answer for the $L^2(I;H^1(\Omega))$ error in \Cref{thm:nav_stokes_l2h1}.
The main result of this work is the error estimate in the $L^\infty(I;L^2(\Omega))$ norm, presented in 
\Cref{thm:nav_stokes_bestapprox}.
With the same techniques, the proof of the $L^2(I;L^2(\Omega))$ error estimate is then straightforward,
and we state the result in \Cref{thm:nav_stokes_l2l2}.

\subsection{Existence and stability results}
\blue{We begin our analysis of the fully discrete Navier-Stokes equations by presenting some stability
results, followed by an existence result of fully discrete solutions. To show these,}
let us \blue{first} recall the following
version of a discrete Gronwall lemma,
which is stated in \cite[Lemma 5.1]{Heywood1990} for a constant timestep $k$, but its proof
can easily be adapted to the setting of variable timesteps $k_m$.
\begin{lemma}\label{lemm:discrete_gronwall}
  Let $\{k_n\}, \{a_n\}, \{b_n\}, \{c_n\}, \{\gamma_n\}$ be sequences of nonnegative numbers
  and $B \ge 0$ a constant, such that for each $n \in \N_0$ it holds
  \begin{equation*}
    a_n + \sum_{m=0}^n k_m b_m \le \sum_{m=0}^n k_m \gamma_m a_m + \sum_{m=0}^n k_m c_m + B,
  \end{equation*}
  and $k_m \gamma_m < 1$ for all $m \in \{0,...,n\}$.
  Then with $\sigma_m := (1 - k_m\gamma_m)^{-1}$ it holds
  \begin{equation*}
    a_n + \sum_{m=0}^n k_m b_m \le \exp \left( \sum_{m=0}^n k_m \sigma_m \gamma_m \right) \cdot
    \left(\sum_{m=0}^n k_m c_m + B\right).
  \end{equation*}
\end{lemma}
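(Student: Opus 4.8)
The statement to prove is: given the hypothesis $a_n + \sum_{m=0}^n k_m b_m \le \sum_{m=0}^n k_m \gamma_m a_m + \sum_{m=0}^n k_m c_m + B$ with $k_m\gamma_m<1$, one concludes $a_n + \sum_{m=0}^n k_m b_m \le \exp\left(\sum_{m=0}^n k_m \sigma_m \gamma_m\right)\left(\sum_{m=0}^n k_m c_m + B\right)$ where $\sigma_m = (1-k_m\gamma_m)^{-1}$. The key structural observation is that the troublesome term is the $m=n$ summand $k_n\gamma_n a_n$ on the right-hand side, which involves $a_n$ itself. The plan is to isolate it: move $k_n\gamma_n a_n$ to the left to get $(1-k_n\gamma_n)a_n + \sum_{m=0}^n k_m b_m \le \sum_{m=0}^{n-1} k_m \gamma_m a_m + \sum_{m=0}^n k_m c_m + B$, then divide by $(1-k_n\gamma_n) = \sigma_n^{-1}$ (legitimate and positive by hypothesis), yielding $a_n + \sigma_n\sum_{m=0}^n k_m b_m \le \sigma_n\sum_{m=0}^{n-1} k_m\gamma_m a_m + \sigma_n\left(\sum_{m=0}^n k_m c_m + B\right)$. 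Since $\sigma_n \ge 1$, the $b$-sum can be kept with coefficient at least $1$, and we arrive at an inequality of the form $a_n \le \sigma_n\sum_{m=0}^{n-1}(k_m\gamma_m) a_m + \sigma_n G_n$ where $G_n := \sum_{m=0}^n k_m c_m + B$ is nondecreasing in $n$.

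From here I would run a standard induction on $n$ to establish $a_n \le \exp\left(\sum_{m=0}^{n-1} k_m \sigma_n\gamma_m\right)\sigma_n G_n$, or more cleanly the bound with $\sigma_m$ inside the exponential. The base case $n=0$ is immediate: $a_0 \le \sigma_0 G_0 \le \exp(k_0\sigma_0\gamma_0)G_0$ since $e^x \ge 1$. For the inductive step, substitute the inductive bounds for $a_0,\dots,a_{n-1}$ into the recursion, use monotonicity of $G_m \le G_n$ and of the partial exponential sums, factor out $\exp\left(\sum_{m=0}^{n-1}k_m\sigma_m\gamma_m\right)G_n$, and reduce to the elementary inequality $\sigma_n\left(1 + \sum_{m=0}^{n-1}k_m\gamma_m \exp\left(\sum_{j=0}^{m-1}k_j\sigma_j\gamma_j\right)\right) \le \exp\left(\sum_{m=0}^{n-1}k_m\sigma_m\gamma_m\right)\cdot(\text{something})$; the cleanest route is to instead prove the slightly stronger telescoping estimate $1 + \sum_{m=0}^{n-1} x_m \prod_{j=0}^{m-1}(1+x_j)^{?}\le \dots$, i.e. to recognize $\prod_{m=0}^{n}(1-k_m\gamma_m)^{-1}$-type products and bound $(1-x)^{-1} \le e^{x/(1-x)} = e^{\sigma x}$ for $0\le x<1$. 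Indeed $\sigma_m = (1-k_m\gamma_m)^{-1}$ and the elementary inequality $(1-x)^{-1}\le \exp(x(1-x)^{-1})$ is exactly what converts the product of $\sigma_m$'s into $\exp(\sum k_m\sigma_m\gamma_m)$.

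Concretely, the slickest argument avoids an explicit double induction: iterate the one-step inequality $a_n \le \sigma_n\sum_{m=0}^{n-1}k_m\gamma_m a_m + \sigma_n G_n$ to obtain, after $n$ unfoldings, a bound of the form $a_n \le G_n \sum (\text{products of } k_m\gamma_m\sigma_m)$, and recognize the resulting sum of products as bounded by $\prod_{m=0}^n(1 + k_m\sigma_m\gamma_m) \le \prod_{m=0}^n \exp(k_m\sigma_m\gamma_m) = \exp\left(\sum_{m=0}^n k_m\sigma_m\gamma_m\right)$, using $1+t\le e^t$. Finally, add back $\sum_{m=0}^n k_m b_m$ (which we retained on the left with coefficient $\ge 1$) to get the stated two-sided conclusion. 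The main obstacle — really the only subtle point — is the correct bookkeeping when the $b$-sum and the constant $B$ are carried through: one must be careful that dividing by $1-k_n\gamma_n$ only helps (since $\sigma_n\ge1$) and never weakens the coefficient of $b$, and that $G_n$ is monotone so that replacing $G_m$ by $G_n$ in the induction is valid. Everything else is routine manipulation of geometric-type sums and the elementary inequalities $1+t\le e^t$ and $(1-t)^{-1}\le e^{t/(1-t)}$; since the excerpt explicitly says the proof "can easily be adapted" from the constant-timestep case, I would simply carry out this adaptation with the variable $k_m$ in place of the fixed $k$ throughout.
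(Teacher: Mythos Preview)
Your plan is correct and complete: isolating the $m=n$ term, dividing by $1-k_n\gamma_n$, and then iterating gives $a_n \le G_n \prod_{m=0}^n \sigma_m$ via the telescoping identity $k_m\gamma_m\prod_{j=0}^m\sigma_j = \prod_{j=0}^m\sigma_j - \prod_{j=0}^{m-1}\sigma_j$ (since $\sigma_m = 1 + k_m\gamma_m\sigma_m$), and the elementary bound $\sigma_m \le \exp(k_m\sigma_m\gamma_m)$ converts the product into the stated exponential. Reinserting the $b$-sum works exactly as you say.

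However, the paper takes a much shorter route than your direct argument. Rather than redoing the induction with variable step sizes, it simply observes that the substitution $\widetilde k = 1$, $\widetilde\gamma_m := k_m\gamma_m$, $\widetilde c_m := k_m c_m$ (and $\widetilde b_m := k_m b_m$) reduces the hypothesis verbatim to the constant-timestep version of the lemma in Heywood--Rannacher, so the conclusion follows by citation. Your approach has the advantage of being self-contained---it essentially reproduces the proof of the cited result rather than invoking it---while the paper's approach is a one-line reduction. Both reach the same place; yours is more work but does not rely on an external reference.
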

\begin{proof}
  By choosing $\widetilde k = 1$, $\widetilde \gamma_m = k_m \gamma_m$ and $\widetilde c_m = k_m c_m$,
  the quantities with $\widetilde \cdot$ correspond to the notation of \cite[Lemma 5.1]{Heywood1990}.
  The result then directly follows from this redefinition.
\end{proof}
Continuous and discrete Gronwall lemmas are stated in many different forms in the literature, see, e.g.,
\cite{li_generalized_2021} and the references therein for an overview over different generalizations
of the original lemma.
Note that, since the sum on the right hand side of the assumed bound goes up to $n$, 
the additional assumption $k_m \gamma_m < 1$ is needed. This is not the case in explicit forms 
of discrete Gronwall lemmas, where the sum on the right goes only up to $n-1$, which is the form
most often considered in the literature, e.g., \cite{pachpatte_integral_2006}.
In the context of dG timestepping schemes, the sequences $\{a_n\}$ and $\{b_n\}$
often correspond to squared norms at time nodes or over subintervals.
It therefore seems natural to also state the following version of a Gronwall lemma,
where we also include a sum over non-squared contributions.
To the best of the authors knowledge, a result like this has not been explicitly used in the literature.
In later sections of this work, this lemma will facilitate the analysis of discrete problems
with right hand sides that are $L^1$ in time. Here the norms of the solution occur in a
non squared contribution.
The following lemma shows, that the weights of the squared sum enter exponentially into the 
estimate, whereas the weights of the linear sum enter linearly in the estimate for $x_n$.
It can be understood as an adaptation of Bihari's inequality to the discrete setting, see
\cite{bihari_generalization_1956,lipovan_retarded_2000}.
To improve readability, we drop the explicit mentioning of the timesteps $k_m$, as we can always apply
a transformation as done in the proof of \Cref{lemm:discrete_gronwall}.
\begin{lemma}\label{thm:discrete_gronwall_quadlinear}
  Let $\{x_n\}, \{b_n\}, \{c_n\}, \{d_n\}, \{\gamma_n\}$ be sequences of nonnegative numbers
  and $B \ge 0$ a constant, such that for each $n \in \N_0$ it holds
  \begin{equation*}
    x_n^2 + \sum_{m=0}^n b_m \le \sum_{m=0}^n \gamma_m x_m^2 
    + \sum_{m=0}^n d_m x_m
    + \sum_{m=0}^n c_m + B,
  \end{equation*}
  and $\gamma_m < 1$ for all $m \in \{0,...,n\}$. Then with $\sigma_m := (1 - \gamma_m)^{-1}$ it holds
  \begin{equation*}
    x_n^2 + \sum_{m=0}^n b_m \le 2\exp \left(2 \sum_{m=0}^n \sigma_m \gamma_m \right)\cdot
    \left[\left(\sum_{m=0}^n d_m \right)^2 + \sum_{m=0}^n c_m + B\right].
  \end{equation*}
\end{lemma}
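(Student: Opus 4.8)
The plan is to reduce the mixed quadratic/linear Gronwall inequality to the purely quadratic one, namely \Cref{lemm:discrete_gronwall}, by absorbing the linear term $\sum_{m=0}^n d_m x_m$ into a quadratic term plus a constant. The key elementary observation is that for any $m$, Young's inequality gives $d_m x_m \le \tfrac12 x_m^2 \cdot (\text{something}) + \tfrac12 d_m^2 \cdot (\text{something})^{-1}$, but a cruder and cleaner route is available: since we only want the final bound in terms of $\left(\sum_m d_m\right)^2$, I would first estimate $\sum_{m=0}^n d_m x_m \le \left(\max_{0\le m \le n} x_m\right)\sum_{m=0}^n d_m$ and then use $ab \le \tfrac14 a^2 + b^2$ with $a = \max_m x_m$ and $b = \sum_m d_m$. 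This yields
\begin{equation*}
  \sum_{m=0}^n d_m x_m \le \tfrac14 \Big(\max_{0\le m\le n} x_m\Big)^2 + \Big(\sum_{m=0}^n d_m\Big)^2.
\end{equation*}
The issue is that $\max_m x_m^2$ is not of the form $\sum_m \gamma_m x_m^2$ appearing in \Cref{lemm:discrete_gronwall}, so this naive splitting does not immediately fit the template.

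To fix this, I would run the argument ``up to the index where the maximum is attained.'' Let $N \le n$ be an index with $x_N = \max_{0\le m \le n} x_m$ (if the maximum is zero there is nothing to prove). Apply the hypothesis at $n = N$:
\begin{equation*}
  x_N^2 + \sum_{m=0}^N b_m \le \sum_{m=0}^N \gamma_m x_m^2 + \sum_{m=0}^N d_m x_m + \sum_{m=0}^N c_m + B
  \le \sum_{m=0}^N \gamma_m x_m^2 + x_N \sum_{m=0}^N d_m + \sum_{m=0}^n c_m + B,
\end{equation*}
where I used $x_m \le x_N$ and nonnegativity to extend/bound the sums. Now $x_N \sum_{m=0}^N d_m \le \tfrac12 x_N^2 + \tfrac12 \left(\sum_{m=0}^n d_m\right)^2$, and the $\tfrac12 x_N^2$ can be absorbed into the left-hand side, leaving
\begin{equation*}
  \tfrac12 x_N^2 \le \sum_{m=0}^N \gamma_m x_m^2 + \tfrac12\Big(\sum_{m=0}^n d_m\Big)^2 + \sum_{m=0}^n c_m + B.
\end{equation*}
This is now exactly in the form required by \Cref{lemm:discrete_gronwall} (after scaling by $2$), with $a_m := x_m^2$, $b_m := 0$, the same $\gamma_m$, and constant term $B' := \left(\sum_{m=0}^n d_m\right)^2 + 2\sum_{m=0}^n c_m + 2B$; note that the hypothesis $\gamma_m < 1$ carries over and the sum on the right still runs up to $N$, not $N-1$, which is precisely the regime \Cref{lemm:discrete_gronwall} handles. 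Applying it gives
\begin{equation*}
  x_N^2 \le \exp\Big(2\sum_{m=0}^N \sigma_m \gamma_m\Big)\Big[\Big(\sum_{m=0}^n d_m\Big)^2 + 2\sum_{m=0}^n c_m + 2B\Big]
  \le 2\exp\Big(2\sum_{m=0}^n \sigma_m \gamma_m\Big)\Big[\Big(\sum_{m=0}^n d_m\Big)^2 + \sum_{m=0}^n c_m + B\Big].
\end{equation*}

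Finally, since $x_n \le x_N$, the bound on $x_n^2$ follows. For the term $\sum_{m=0}^n b_m$, I go back to the original hypothesis at index $n$, move $\sum_{m=0}^n \gamma_m x_m^2 \le \left(\sum_m \gamma_m\right) x_N^2 \le \left(\sum_m \sigma_m\gamma_m\right) x_N^2$ and $\sum_{m=0}^n d_m x_m \le x_N \sum_m d_m$ to the right, and insert the bound on $x_N^2$ just obtained; a short computation, using $1 \le \exp(2\sum \sigma_m\gamma_m)$ and $ab \le a^2 + b^2$ once more for the $d$-term, absorbs everything into the stated right-hand side (possibly with the constant $2$ already built in to spare). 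The main obstacle is the bookkeeping in this last step — making sure the constant $2$ and the $\exp(2\sum\sigma_m\gamma_m)$ factor in the statement are large enough to cover both $x_n^2$ and $\sum b_m$ simultaneously — but this is routine once the reduction via the maximizing index $N$ is in place; the genuinely new idea is simply that splitting the linear term forces one to argue at the index where $x_m$ is largest rather than at a generic $n$.
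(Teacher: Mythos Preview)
Your reduction has a real gap at the point where you invoke \Cref{lemm:discrete_gronwall}. That lemma requires the inequality $a_j \le \sum_{m=0}^j \gamma_m a_m + \text{const}$ to hold \emph{for every} index $j$ up to the one at which you want the conclusion; you have only derived it at the single maximizing index $N$. For $j<N$ the same manipulation fails: bounding $\sum_{m\le j} d_m x_m \le x_N \sum d_m \le \tfrac12 x_N^2 + \tfrac12(\sum d_m)^2$ puts $\tfrac12 x_N^2$ on the right-hand side, which you cannot absorb into $x_j^2$. One can try to repair this by passing to the running maximum $y_j:=\max_{m\le j}x_m$ and showing the inequality for $y_j^2$ at every $j$, but then your ``scaling by $2$'' turns the coefficients into $2\gamma_m$, so the hypothesis of \Cref{lemm:discrete_gronwall} becomes $2\gamma_m<1$ rather than the stated $\gamma_m<1$, and the exponential factor you obtain is $\exp\!\big(\sum 2\gamma_m(1-2\gamma_m)^{-1}\big)$, not the claimed $\exp\!\big(2\sum \gamma_m\sigma_m\big)$. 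The final bookkeeping for $\sum b_m$ is also not innocent: bounding $\sum\gamma_m x_m^2 \le (\sum\sigma_m\gamma_m)x_N^2$ produces an extra factor $\sum\sigma_m\gamma_m$ in front of the exponential, which need not be absorbed by the constant $2$.

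The paper takes a different route that sidesteps all of this. It introduces slack variables to turn the hypothesis into an equality and sets $X_n:=\big(x_n^2+\sum_{m\le n}b_m+\delta_n\big)^{1/2}$, which is \emph{monotone} in $n$; this monotonicity replaces your maximizing-index trick. The key step is then to use the quadratic-formula estimate ``$X^2\le aX+b \Rightarrow X\le a+\sqrt b$'' to obtain a \emph{linear} recursion $X_n \le \sum_{m\le n}\gamma_m X_m + \sum d_m + \sqrt{\sum c_m + B}$, valid for every $n$. The standard Gronwall \Cref{lemm:discrete_gronwall} is applied to $X_n$ (not $X_n^2$), so no doubling of $\gamma_m$ occurs and the hypothesis $\gamma_m<1$ survives intact; squaring at the end gives the stated constants exactly.
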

\begin{proof}
  For $n \in \N_0$ define $\delta_n \ge 0$ such that 
  \begin{equation}\label{eq:Xn}
    x_n^2 + \sum_{m=0}^n b_m + \delta_n 
    =\sum_{m=0}^n \gamma_m x_m^2 
    + \sum_{m=0}^n d_m x_m
    + \sum_{m=0}^n c_m + B,
  \end{equation}
  and set $X_n := \sqrt{x_n^2 + \sum_{m=0}^n b_m + \delta_n}$. We will show
  \begin{equation*}
    X_n^2
    \le 2\exp \left(2 \sum_{m=0}^n \sigma_m \gamma_m \right) \cdot
    \left[\left(\sum_{m=0}^n d_m \right)^2 + \sum_{m=0}^n c_m + B\right]
    \qquad \text{for all } n \in \N_0,
  \end{equation*}
  from which the assertion follows by the definition of $X_n$.
  Note that by assumption it holds
  \begin{equation*}
    X_n^2 - X_{n-1}^2 = \gamma_n x_n^2 + d_n x_n + c_n \ge 0
    \qquad \text{for all } n \in \N,
  \end{equation*}
  and thus the sequences $\lbrace X_n^2\rbrace$ and $\lbrace X_n\rbrace$
  are monotonically increasing.
  By $X_n \ge x_n$ and the monotonicity of $X_n$, from \eqref{eq:Xn} we obtain
  \begin{equation}\label{eq:discrete_gronwall_squared_form}
    \begin{aligned}
      X_n^2 &\le \sum_{m=0}^n \gamma_m X_m^2 + \sum_{m=0}^n d_m X_m + \sum_{m=0}^n c_m + B
            &\le \left(\sum_{m=0}^n \gamma_m X_m + \sum_{m=0}^n d_m\right) X_n + \sum_{m=0}^n c_m + B.
    \end{aligned}
  \end{equation}
  Let us now recall, that for $a,b > 0$, an estimate
  $x^2 \le a x + b$ implies $x \le a + \sqrt{b}$.
  To see this, we start by computing the roots of the quadratic polynomial, yielding
  $x \le \frac{a}{2} + \sqrt{ \frac{a^2}{4} +b}$.
  The root of the polynomial can then be estimated by
    $\frac{a}{2} + \sqrt{ \frac{a^2}{4} +b} \le a + \sqrt{b},$
  which can be shown by subtracting $a/2$ and squaring both sides.
  Applied to \cref{eq:discrete_gronwall_squared_form}, we have thus for every $n \in \N_0$ the estimate
  \begin{equation*}
    X_n \le \sum_{m=0}^n \gamma_m X_m + \sum_{m=0}^n d_m + \sqrt{\sum_{m=0}^n c_m + B}
    = \sum_{m=0}^n \gamma_m X_m + \sum_{m=0}^n \tilde c_m + \tilde B,
  \end{equation*}
  where we have defined $\tilde B :=0$ as well as 
  \begin{equation*}
    \tilde c_0 := \left(d_0 + \sqrt{c_0 + B}\right) \blue{\ge 0}, \quad \text{and} \quad
    \tilde c_m := \left(d_m + \sqrt{\sum_{j=0}^m c_j + B} 
    - \sqrt{\sum_{j=0}^{m-1} c_j + B}\right) \blue{\ge 0} \text{ for } m>0.
  \end{equation*}
  We can thus apply \Cref{lemm:discrete_gronwall} to $X_n$
  and obtain after resubstituting the tilded quantities:
  \begin{equation*}
    X_n \le \exp\left(\sum_{m=0}^n \gamma_m \sigma_m\right) \cdot
    \left( \sum_{m=0}^n \tilde c_m + \tilde B \right)
    \le \exp\left(\sum_{m=0}^n \gamma_m \sigma_m\right) \cdot
    \left( \sum_{m=0}^n d_m + \sqrt{\sum_{m=0}^n c_m + B} \right).
  \end{equation*}
  Squaring and estimating the square of the sum yields the result.
\end{proof}
\blue{With these technical lemmas, we can show}
stability \blue{of the discrete solutions} in different norms under different assumptions on $f$.
Solutions to the discrete problem also satisfy the same energy bounds as the weak solutions,
i.e., there holds the following proposition, see \cite[Lemma 5.1, Theorem A.1]{Chrysafinos2010}.

\begin{proposition}[Stability of discrete Navier-Stokes]
  Let $f \in L^2(I;\blue{H^{-1}(\Omega)^2})$, $u_0 \in H$ and $u_{kh} \in \Vkh$ satisfy
  \eqref{eq:nav_stokes_discrete}.
  Then there hold the bounds
  \blue{
  \begin{align*}
    \|\ukh\|_{L^\infty(I;L^2(\Omega))} 
    + \|\ukh\|_{\LtwoHone} \le C \left(\|u_0\|_H + \|f\|_{L^2(I;H^{-1}(\Omega))}\right),
    \quad \text{ if } q = 0,\\
    \left(\|\ukh\|_{L^\infty(I;L^2(\Omega))}\right)^{\frac{1}{2}} 
    + \|\ukh\|_{\LtwoHone} \le C \left(\|u_0\|_H + \|f\|_{L^2(I;H^{-1}(\Omega))}\right), \quad \text{ if } q \ge 1,
  \end{align*}
}
  with a constant $C$ depending on the domain $\Omega$ and the viscosity $\nu$.
\end{proposition}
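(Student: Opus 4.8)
The plan is to test the discrete equation \eqref{eq:nav_stokes_discrete} with $\vkh = \ukh$ restricted to the first $n$ subintervals, and then apply the discrete Gronwall lemma \Cref{lemm:discrete_gronwall} (in the $q=0$ case) or the quadratic-linear variant \Cref{thm:discrete_gronwall_quadlinear} (in the $q\ge1$ case). First I would exploit the antisymmetry $\chatO{\ukh,\ukh,\ukh}=0$ from \Cref{lemm:chat}, so that the nonlinear term drops out entirely on each subinterval; this is the whole point of using $\hat c$ rather than $c$, and it means the discrete energy identity is formally identical to that of the discrete Stokes problem. Using \Cref{lemm:jump_reorder} to rewrite $\sum_{m=1}^n\left(\ImOprod{\partial_t\ukh,\ukh}+\Oprod{[\ukh]_{m-1},u_{kh,m-1}^+}\right)$ as a telescoping sum of squared $L^2$-norms at time nodes plus jump terms, and adding the initial term $\Oprod{u_0,u_{kh,0}^+}$, I obtain
\begin{equation*}
  \tfrac12\|u_{kh,n}^-\|_{L^2(\Omega)}^2 + \tfrac12\sum_{m=1}^n\|[\ukh]_{m-1}\|^2 + \nu\|\nabla\ukh\|_{L^2(I_n\times\Omega)}^2 \le \IjOprod{f,\ukh}{n} + \tfrac12\|u_0\|_{L^2(\Omega)}^2,
\end{equation*}
where I write $v^+_{kh,0}$ against $u_0$ and complete the square.

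Next I would estimate the right-hand side $\IjOprod{f,\ukh}{n}$. Since $f\in L^2(I;H^{-1}(\Omega)^2)$, I bound $\Oprod{f,\ukh}\le \|f\|_{H^{-1}(\Omega)}\|\nabla\ukh\|_{L^2(\Omega)}$ and absorb the $\nabla\ukh$ factor into the viscosity term via Young's inequality, leaving $\tfrac{1}{2\nu}\|f\|_{L^2(I;H^{-1}(\Omega))}^2$ on the right. This already gives the $L^2(I;H^1(\Omega))$ bound and a bound on $\|u_{kh,n}^-\|_{L^2(\Omega)}$ uniformly in $n$; taking the maximum over $n$ and using that a dG polynomial on $I_m$ attains its $L^\infty(I_m;L^2(\Omega))$ norm controllably in terms of its endpoint values (an inverse-type estimate in time on $\mathcal P_q$, trivial for $q=0$ where the function is constant) yields the $L^\infty(I;L^2(\Omega))$ bound. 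In the $q=0$ case the function equals $u_{kh,m}^-$ on $I_m$, so $\|\ukh\|_{L^\infty(I;L^2(\Omega))}=\max_m\|u_{kh,m}^-\|_{L^2(\Omega)}$ directly; in the $q\ge1$ case one only controls $\|\ukh\|_{L^\infty}$ after a further argument, which explains the square root appearing in the statement — one uses $\|v\|_{L^\infty(I_m;L^2)}^2 \le C(\|v_{m}^-\|_{L^2}^2 + k_m^{-1}\|v\|_{L^2(I_m;L^2)}^2)$ together with the already-established $L^2(I;L^2)$ bound, but since the latter is only controlled via $\|\nabla\ukh\|_{L^2(I\times\Omega)}$ through Poincaré, and one does not get a clean $L^\infty$ bound of the same order, the half-power is what survives.

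Actually, since the nonlinearity vanishes, no Gronwall argument is strictly needed for the stated bounds; the reference to \Cref{lemm:discrete_gronwall} in the text is more relevant to later, sharper stability results. I would therefore present the clean energy argument above and invoke \cite[Lemma 5.1, Theorem A.1]{Chrysafinos2010} for the technical handling of the $q\ge1$ endpoint-versus-interior-norm comparison, which is the one genuinely delicate point. The main obstacle is precisely this $q\ge1$ discrepancy: a dG(1) function is not determined by its left endpoint, so the natural energy identity controls $\|u_{kh,m}^-\|_{L^2}$ at nodes but not the full $L^\infty(I;L^2)$ norm at the same power; resolving this requires the inverse estimate in time and leads to the $\big(\|\ukh\|_{L^\infty(I;L^2)}\big)^{1/2}$ form in the statement, and I would make sure to track the constant's dependence on $\nu$ and $\Omega$ (and independence of $k$, using the quasi-uniformity and ratio assumptions on the time grid) throughout.
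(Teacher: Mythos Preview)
The paper does not give its own proof of this proposition; it simply records the result and cites \cite[Lemma~5.1, Theorem~A.1]{Chrysafinos2010}. Your energy argument---test with $\ukh$, kill the nonlinearity via $\chatO{\ukh,\ukh,\ukh}=0$, telescope with \Cref{lemm:jump_reorder}, and absorb $\IOprod{f,\ukh}$ through $H^{-1}/H^1_0$ duality and Young's inequality---is exactly the standard route and correctly yields the $L^2(I;H^1(\Omega))$ bound for all $q$ and the full $L^\infty(I;L^2(\Omega))$ bound for $q=0$. You are also right that no Gronwall step is needed here; the paper's Gronwall machinery (\Cref{lemm:discrete_gronwall}, \Cref{thm:discrete_gronwall_quadlinear}) is introduced for the sharper \Cref{thm:stability_discrete_navier_stokes} that follows.

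The one weak spot is your mechanism for the square root when $q\ge 1$. The inverse-in-time estimate $\|v\|_{L^\infty(I_m;L^2)}^2 \le C\bigl(\|v_m^-\|_{L^2}^2 + k_m^{-1}\|v\|_{L^2(I_m;L^2)}^2\bigr)$ carries a $k_m^{-1}$ that cannot be controlled uniformly, so this does not by itself explain why a bound quadratic in the data survives; your sentence ``the half-power is what survives'' papers over the actual argument. Since you end up invoking \cite{Chrysafinos2010} for this step anyway, and that is precisely what the paper does, there is no real discrepancy---but be aware that the heuristic you offered for the $q\ge 1$ case is not a proof. If you want a concrete handle for $q=1$, note (as the paper does later in the proof of \Cref{thm:stability_discrete_navier_stokes}) that $\|\ukh\|_{L^\infty(I_m;L^2(\Omega))}\le \max\{\|u_{kh,m}^-\|_{L^2(\Omega)},\|u_{kh,m-1}^-\|_{L^2(\Omega)}+\|[\ukh]_{m-1}\|_{L^2(\Omega)}\}$, which avoids inverse estimates entirely; the general $q\ge 1$ argument in \cite{Chrysafinos2010} is more involved.
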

Note that \blue{for the case $q \ge 1$}, contrary to the continuous setting
\blue{of} \Cref{prop:navier_stokes_solvability},
due to the exponent $\frac{1}{2}$ on the left hand side,
the above proposition states a bound for
$\|\ukh\|_{L^\infty(I;L^2(\Omega))}$, which depends on the squared norms of the data $u_0$ and $f$.
For the two low order cases $q=0$ and $q=1$, and $f\in L^2(I;L^2(\Omega)^2)$,
\cite[Lemma 5.1]{Chrysafinos2010} also shows an estimate of the form
  \begin{equation*}
    \max_{1\le m \le M} \|u_{kh,m}^-\|_{L^2(\Omega)} + \|\ukh\|_{\blue{\LtwoHone}}
    \le C_1 e^{C_2 T} \left(\|u_0\|_H + \|f\|_{L^2(I;L^2(\Omega))}\right).
  \end{equation*}
  With our discrete Gronwall estimate \Cref{thm:discrete_gronwall_quadlinear},
  we can generalize this result to $f \in L^1(I;L^2(\Omega)^2)$.
  Furthermore, by using the version of Gronwall's lemma presented here,
  contrary to \cite{Chrysafinos2010}, the bound does not grow exponentially in $T$. This is in
  agreement with the result of \Cref{prop:navier_stokes_solvability} in the continuous setting.
  We obtain the following result, which now yields an estimate for the norms of $\ukh$, that is linear
  in terms of the data.
\begin{theorem}\label{thm:stability_discrete_navier_stokes}
  Let $f \in L^1(I;L^2(\Omega)^2) + L^2(I;\blue{H^{-1}(\Omega)^2})$, $u_0 \in H$ and $u_{kh} \in \Vkh$ satisfy
  \cref{eq:nav_stokes_discrete} for either $q=0$ or $q=1$. Then there holds the bound
  \begin{equation*}
    \|\ukh\|_{L^\infty(I;L^2(\Omega))} + \blue{\sqrt{\nu}} \|\ukh\|_{\blue{\LtwoHone}} 
    \le C \left(\|u_0\|_H + \|f\|_{L^1(I;L^2(\Omega)) + L^2(I;\blue{H^{-1}(\Omega)})}\right),
  \end{equation*}
  \blue{with a constant $C$ only depending on $\Omega,\nu$. If $f \in L^1(I;L^2(\Omega)^2)$, then 
  $C$ only depends on $\Omega$.}
\end{theorem}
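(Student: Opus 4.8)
The plan is to establish the bound as an \emph{a priori} estimate and then close the argument with the tailored discrete Gronwall inequality \Cref{thm:discrete_gronwall_quadlinear}. Fix $n \in \{1,\dots,M\}$ and test \eqref{eq:nav_stokes_discrete} with the element of $\Vkh$ that equals $\ukh$ on $(0,t_n]$ and vanishes on $(t_n,T]$; this is admissible since the dG spaces impose no continuity at the time nodes. For this test function the nonlinear contribution over $(0,t_n]$ vanishes pointwise in time, because $\chatO{\ukh(t),\ukh(t),\ukh(t)}=0$ by the identity $\chatO{u_h,v_h,v_h}=0$ of \Cref{lemm:chat}, so only the linear form $\mathfrak B$ and the data remain. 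Reordering the time-derivative and jump terms of $\mathfrak B(\ukh,\ukh)$ on $I_1,\dots,I_n$ with the first identity of \Cref{lemm:jump_reorder} makes the endpoint contributions telescope, and combining the initial term $\|u_{kh,0}^+\|_{L^2(\Omega)}^2$ with $\Oprod{u_0,u_{kh,0}^+}$ through a non-sharp Young step (so as to retain a fixed fraction of $\|u_{kh,0}^+\|_{L^2(\Omega)}^2$ on the left) yields, for every $1 \le n \le M$,
\begin{equation*}
  \|u_{kh,n}^-\|_{L^2(\Omega)}^2 + \|u_{kh,0}^+\|_{L^2(\Omega)}^2 + \sum_{m=2}^n\|[\ukh]_{m-1}\|_{L^2(\Omega)}^2 + \nu\sum_{m=1}^n \ImOprod{\nabla\ukh,\nabla\ukh} \le C\Big(\int_0^{t_n}\Opair{f,\ukh}\,dt + \|u_0\|_{L^2(\Omega)}^2\Big)
\end{equation*}
with an absolute constant $C$.

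The next step controls the data term. Decompose $f = f_1 + f_2$ with $f_1 \in L^1(I;L^2(\Omega)^2)$ and $f_2 \in L^2(I;H^{-1}(\Omega)^2)$, nearly realizing the sum norm. For $f_2$ one bounds $C\int_0^{t_n}\Opair{f_2,\ukh}\,dt$ by $\tfrac{\nu}{2}\sum_{m=1}^n\ImOprod{\nabla\ukh,\nabla\ukh} + C\nu^{-1}\|f_2\|_{L^2(I;H^{-1}(\Omega))}^2$ and absorbs the gradient. For $f_1$ one uses that on each $I_m$ the function $\ukh$ is a polynomial in $t$ of degree at most $q \le 1$, whence $\sup_{t \in I_m}\|\ukh(t)\|_{L^2(\Omega)} \le \max\{\|u_{kh,m-1}^+\|_{L^2(\Omega)},\|u_{kh,m}^-\|_{L^2(\Omega)}\}$, so that $\ImOprod{f_1,\ukh} \le \big(\int_{I_m}\|f_1\|_{L^2(\Omega)}\,dt\big)\big(\|u_{kh,m-1}^+\|_{L^2(\Omega)} + \|u_{kh,m}^-\|_{L^2(\Omega)}\big)$. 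Using $\|u_{kh,m-1}^+\|_{L^2(\Omega)} \le \|u_{kh,m-1}^-\|_{L^2(\Omega)} + \|[\ukh]_{m-1}\|_{L^2(\Omega)}$ for $m \ge 2$ (and the retained $\|u_{kh,0}^+\|_{L^2(\Omega)}^2$ for $m=1$), the jump terms are absorbed into the jump sum on the left by Young's inequality, the squares $(\int_{I_m}\|f_1\|_{L^2(\Omega)})^2$ summing to at most $\|f_1\|_{L^1(I;L^2(\Omega))}^2$, and what is left is linear in $x_m := \|u_{kh,m}^-\|_{L^2(\Omega)}$: after reindexing it has the form $\sum_m d_m x_m$ with $\sum_m d_m \le C\|f_1\|_{L^1(I;L^2(\Omega))}$.

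Collecting terms, for every $n$ we obtain an inequality of precisely the shape assumed in \Cref{thm:discrete_gronwall_quadlinear} with \emph{vanishing quadratic weights} $\gamma_m \equiv 0$: $x_n^2 + \sum_m b_m \le \sum_m d_m x_m + \sum_m c_m + B$, where $b_m$ collects the jump and gradient contributions, $\sum_m d_m \le C\|f_1\|_{L^1(I;L^2(\Omega))}$, $\sum_m c_m \le C\|f_1\|_{L^1(I;L^2(\Omega))}^2$ and $B = \|u_0\|_{L^2(\Omega)}^2 + C\nu^{-1}\|f_2\|_{L^2(I;H^{-1}(\Omega))}^2$. Applying \Cref{thm:discrete_gronwall_quadlinear} — here $\sigma_m = 1$ and the exponential factor is just $2$ — gives, uniformly in $n$ and in $T$,
\begin{equation*}
  \max_{1 \le m \le n}\|u_{kh,m}^-\|_{L^2(\Omega)}^2 + \sum_{m=2}^n\|[\ukh]_{m-1}\|_{L^2(\Omega)}^2 + \nu\sum_{m=1}^n\ImOprod{\nabla\ukh,\nabla\ukh} \le C\Big(\|u_0\|_{L^2(\Omega)}^2 + \|f\|_{L^1(I;L^2(\Omega)) + L^2(I;H^{-1}(\Omega))}^2\Big),
\end{equation*}
the maximum over $m$ being controlled because the auxiliary sequence built in the proof of \Cref{thm:discrete_gronwall_quadlinear} is monotone increasing. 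Since a degree-$\le 1$ polynomial on $I_m$ is bounded in $L^2(\Omega)$ by its two endpoint values $u_{kh,m-1}^+$ and $u_{kh,m}^-$, and $\|u_{kh,m-1}^+\|_{L^2(\Omega)} \le \|u_{kh,m-1}^-\|_{L^2(\Omega)} + \|[\ukh]_{m-1}\|_{L^2(\Omega)}$ (with $\|u_{kh,0}^+\|_{L^2(\Omega)}$ bounded by the estimate above), this gives the asserted bound on $\|\ukh\|_{L^\infty(I;L^2(\Omega))}$, while the gradient sum on the left, together with Poincaré's inequality, delivers $\sqrt{\nu}\,\|\ukh\|_{L^2(I;H^1(\Omega))}$. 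If $f \in L^1(I;L^2(\Omega)^2)$ only, the $\nu$-weighted Young step is never used, $B = \|u_0\|_{L^2(\Omega)}^2$, and the constant depends on $\Omega$ alone, through Poincaré's inequality.

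The point requiring care — and the reason the specially tailored Gronwall lemma is indispensable — is that for merely $L^1$-in-time data the forcing term must be kept genuinely linear in $\|\ukh\|_{L^2(\Omega)}$, i.e.\ it must enter through $\sum_m d_m x_m$ rather than through a quadratic term $\sum_m \gamma_m x_m^2$; any truly quadratic dependence on the solution would resurrect a factor $\exp(CT)$ as in \cite{Chrysafinos2010} and spoil the $T$-independence of the constant. The secondary bookkeeping difficulty is the estimation of the interval-interior values of $\ukh$ in the case $q=1$ by the nodal values, which is exactly what produces the non-squared sum $\sum_m d_m x_m$ on the right-hand side that \Cref{thm:discrete_gronwall_quadlinear} is designed to handle.
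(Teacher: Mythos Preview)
Your proof is correct and follows essentially the same route as the paper: test with $\ukh$ on $(0,t_n]$, kill the nonlinearity via $\chatO{\ukh,\ukh,\ukh}=0$, use \Cref{lemm:jump_reorder} to telescope, estimate the $L^1$-in-time forcing by interval-endpoint values (valid for $q\le 1$), and close with \Cref{thm:discrete_gronwall_quadlinear} with $\gamma_m\equiv 0$. The only differences are bookkeeping: the paper sets $u_{kh,0}^- := u_0$ so that the initial contribution becomes a clean jump term $\|[\ukh]_0\|^2$ on the left and $\|u_0\|^2$ on the right (avoiding your non-sharp Young step), and for $q=1$ the paper packages the jump sum into $x_n^2$ itself rather than absorbing jump contributions separately via Young as you do; it also treats $q=0$ and $q=1$ as two separate cases, whereas you handle them uniformly. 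None of these differences affects the argument's validity or the final constants in any essential way.
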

\begin{proof}
  We first prove the result for $f \in L^1(I;L^2(\Omega)^2)$ and remark at the end, 
  which modifications are needed to also cover the case, \blue{where $f$ is partly in $L^2(I;\HmOd)$.}
  \blue{Note that in the special case of $f \in L^2(I;\HmOd)$ and $q=0$, this theorem is precisely the 
  first statement of \cite[Lemma 5.1]{Chrysafinos2010}.}
  For notational simplicity, we use the convention $u_{kh,0}^- := u_0$ and accordingly
  $[\ukh]_0 = u_{kh,0}^+ - u_0$.
  By testing \eqref{eq:nav_stokes_discrete} with $\ukh|_{I_m}$ and using \Cref{lemm:chat}
  we arrive on each time interval at:
  \begin{equation*}
    \blue{\ImOprod{\partial_t \ukh,\ukh}} + \nu\ImOprod{\nabla \ukh,\nabla \ukh}
    + \Oprod{[\ukh]_{m-1},u_{kh,m-1}^+} = \ImOprod{f,\ukh}.
  \end{equation*}
  Applying \Cref{lemm:jump_reorder} gives
  \begin{equation*}
    \frac{1}{2} \|u_{kh,m}^-\|_{L^2(\Omega)}^2
    + \frac{1}{2} \|[\ukh]_{m-1}\|_{L^2(\Omega)}^2
    + \nu\|\nabla \ukh \|_{\LtwomLtwo}^2
    = \frac{1}{2} \|u_{kh,m-1}^-\|_{L^2(\Omega)}^2
    + \ImOprod{f,\ukh}.
  \end{equation*}
  Multiplying by two and summing up the identity over the intervals $1,...,n$, we obtain
  \begin{equation}\label{eq:nav_stokes_discrete_tested}
    \|u_{kh,n}^-\|_{L^2(\Omega)}^2 
    + \sum_{m=1}^n \|[\ukh]_{m-1}\|_{L^2(\Omega)}^2 
    + 2\nu \|\nabla \ukh\|^2_{L^2(I_m\times \Omega)}
    = \|u_{kh,0}^-\|_{L^2(\Omega)}^2 
    + 2\sum_{m=1}^n \IjOprod{f,\ukh}{m}.
  \end{equation}
  From this point on, we have to treat the two cases $q=0$ and $q=1$ separately.\\
  \textit{Case 1:} If $q=0$, then it holds 
  $\|\ukh\|_{L^\infty(I_m;L^2(\Omega))} = \|u_{kh,m}^-\|_{L^2(\Omega)}$.
  Thus with \blue{Hölder's} inequality, the terms in the last sum of \eqref{eq:nav_stokes_discrete_tested} 
  can be estimated as
  \begin{equation*}
    \IjOprod{f,\ukh}{m}
    \le \|f\|_{L^1(I_m;L^2(\Omega))}\|\ukh\|_{L^\infty(I_m;L^2(\Omega))}
    \le \|f\|_{L^1(I_m;L^2(\Omega))}\|u_{kh,m}^-\|_{L^2(\Omega)}.
  \end{equation*}
  Hence, from \eqref{eq:nav_stokes_discrete_tested} we obtain the following
  \begin{equation*}
    \|u_{kh,n}^-\|_{L^2(\Omega)}^2 
    + \sum_{m=1}^n \|[\ukh]_{m-1}\|_{L^2(\Omega)}^2 
    + 2 \nu \|\nabla \ukh\|^2_{L^2(I_m\times \Omega)}
    \le \|u_{0}\|_{L^2(\Omega)}^2 
    + 2\sum_{m=1}^n \|f\|_{L^1(I_m;L^2(\Omega))}\|u_{kh,m}^-\|_{L^2(\Omega)}.
  \end{equation*}
  An application of \Cref{thm:discrete_gronwall_quadlinear} proves the assertion.\\
  \textit{Case 2:} If $q=1$, then the $L^\infty(I_m)$ norm can be estimated by the evaluation at 
  the two endpoints of the interval:
  \begin{equation*}
    \|\ukh\|_{L^\infty(I_m;L^2(\Omega))} 
    \le \max \lbrace \|u_{kh,m}^-\|_{L^2(\Omega)},\|u_{kh,m-1}^+\|_{L^2(\Omega)}\rbrace.
  \end{equation*}
  With triangle inequality we can estimate the right sided limit in terms of a left sided limit and a 
  jump:
  \begin{equation*}
    \|\ukh\|_{L^\infty(I_m;L^2(\Omega))} 
    \le \max \lbrace \|u_{kh,m}^-\|_{L^2(\Omega)},
    \|u_{kh,m-1}^-\|_{L^2(\Omega)}+\|[\ukh]_{m-1}\|_{L^2(\Omega)}\rbrace.
  \end{equation*}
  Defining $x_n^2 := \|u_{kh,n}^-\|_{L^2(\Omega)}^2 + \sum_{m=1}^n \|[\ukh]_{m-1}\|_{L^2(\Omega)}^2$
  for $n=1,...,M$
  yields from \eqref{eq:nav_stokes_discrete_tested} the estimate
  \begin{equation*}
    x_n^2 + 2\nu \sum_{m=1}^n \|\nabla \ukh \|_{L^2(I_m;L^2(\Omega))}^2
    \le \|u_{0}\|_{L^2(\Omega)}^2 + 2\sum_{m=1}^n \|f\|_{L^1(I_m;L^2(\Omega))} (x_m + x_{m-1}),
  \end{equation*}
  or after an index shift
  \begin{equation*}
    x_n^2 + 2 \nu \sum_{m=1}^n \|\nabla \ukh \|_{L^2(I_m;L^2(\Omega))}^2
    \le \|u_{0}\|_{L^2(\Omega)}^2 
    + 2\|f\|_{L^1(I_1;L^2(\Omega))} \|u_0\|
    + 2\sum_{m=1}^n \|f\|_{L^1(I_m \cup I_{m+1};L^2(\Omega))} \cdot x_m.
  \end{equation*}
  In order to shorten the notation, we have added here a term $\|f\|_{L^1(I_{n+1};L^2(\Omega))}x_n$,
  where in case $n=M$ we use the convention $I_{M+1} = \emptyset$.
  Again we can apply \Cref{thm:discrete_gronwall_quadlinear}, which yields the estimate
  \begin{equation*}
    x_n^2 + 2 \nu \sum_{m=1}^n \|\nabla \ukh \|_{L^2(I_m;L^2(\Omega))}^2
    \le 2\left[\left(2 \sum_{m=1}^n \|f\|_{L^1(I_m \cup I_{m+1};L^2(\Omega))} \right)^2 
    + \|u_0\|_{L^2(\Omega)}^2 
    + 2\|f\|_{L^1(I_1;L^2(\Omega))} \|u_0\|\right].
  \end{equation*}
 Using Young's inequality, to estimate the term $2\|f\|_{L^1(I_1;L^2(\Omega))} \|u_0\|$,
 concludes the proof.
 In case $f = f_1 + f_2$, with $f_1 \in \blue{L^1(I;L^2(\Omega)^2)}$, $f_2 \in L^2(I;\blue{\HmOd})$, 
 the contribution of $f_1$ can be treated as above. For the $f_2$ contribution, in 
 \eqref{eq:nav_stokes_discrete_tested} one applies \blue{Hölder's and Young's} inequalities, and absorbs
 the $\|\nabla \ukh\|_{L^2(I_m;L^2(\Omega))}$ term to the left. 
 Note that in both cases $q=0,1$, in the estimates before the application of 
 \Cref{thm:discrete_gronwall_quadlinear},
 no $x_m^2$ terms are summed on the right hand side, thus
 no exponential dependency is introduced in the final estimate.
\end{proof}
The techniques presented above now allow us to show the \blue{existence of solutions to} 
the discrete equations \eqref{eq:nav_stokes_discrete}.
\begin{theorem}
  Let $u_0 \in H$ and $f \in L^1(I;L^2(\Omega)^2) + L^2(I;\blue{\HmOd})$.
  Let further either $q=0$ or $q=1$. 
  Then there exists a solution $\ukh \in \Vkh$ of \eqref{eq:nav_stokes_discrete}.
\end{theorem}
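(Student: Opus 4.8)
The plan is to establish existence of a discrete solution $\ukh \in \Vkh$ by a fixed-point argument, using the a priori bound from \Cref{thm:stability_discrete_navier_stokes} to confine the iteration to a ball. Since the fully discrete problem \eqref{eq:nav_stokes_discrete} decouples across time intervals — on $I_m$ the equation only couples to earlier intervals through the datum $u_{kh,m-1}^-$ — I would proceed inductively over $m = 1,\dots,M$, at each step solving a finite-dimensional problem on $\mathcal P_q(I_m;V_h)$ with the initial value $u_{kh,m-1}^-$ (set $u_{kh,0}^- := u_0$, or rather $\Ph u_0$, consistent with the convention in the proof of \Cref{thm:stability_discrete_navier_stokes}) treated as known data. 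On a single interval $I_m$, the map $\ukh|_{I_m} \mapsto$ (residual of \eqref{eq:nav_stokes_discrete} restricted to test functions supported on $I_m$) is a continuous map from the finite-dimensional space $\mathcal P_q(I_m;V_h)$ into its dual; identifying this space with $\R^N$ via an inner product, I would apply the standard consequence of Brouwer's fixed-point theorem (see, e.g., \cite[Chapter IV, Lemma 1.4]{Temam1977}): if $\Oprod{F(\xi),\xi} \ge 0$ for all $\xi$ with $|\xi| = \rho$, then $F$ has a zero in the ball of radius $\rho$.

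The coercivity estimate needed to invoke this lemma is exactly the computation already carried out in the proof of \Cref{thm:stability_discrete_navier_stokes}: testing \eqref{eq:nav_stokes_discrete} on $I_m$ with $\ukh|_{I_m}$, the trilinear term drops out by the antisymmetry $\chatO{u_h,v_h,v_h} = 0$ from \Cref{lemm:chat}, and \Cref{lemm:jump_reorder} gives
\begin{equation*}
  \tfrac12\|u_{kh,m}^-\|_{L^2(\Omega)}^2 + \tfrac12\|[\ukh]_{m-1}\|_{L^2(\Omega)}^2 + \nu\|\nabla\ukh\|_{\LtwomLtwo}^2 = \tfrac12\|u_{kh,m-1}^-\|_{L^2(\Omega)}^2 + \ImOprod{f,\ukh}.
\end{equation*}
Using Hölder's and Young's inequalities to absorb the $f$-term (here one also uses a discrete norm-equivalence on the finite-dimensional polynomial-in-time space to bound $\ImOprod{f,\ukh}$ by a multiple of the $L^2(I_m;L^2(\Omega))$ or $L^2(I_m;H^1(\Omega))$ norm of $\ukh$, together with the datum), one sees that the corresponding vector field $F$ satisfies $\Oprod{F(\ukh|_{I_m}),\ukh|_{I_m}} > 0$ once $\|\ukh|_{I_m}\|$ exceeds a radius depending only on $\|u_{kh,m-1}^-\|_{L^2(\Omega)}$, $\|f\|_{L^1(I_m;L^2(\Omega))+L^2(I_m;H^{-1}(\Omega))}$, $\nu$, and the discretization parameters on $I_m$. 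Brouwer then yields a solution on $I_m$; its left endpoint value $u_{kh,m}^-$ becomes the datum for step $m+1$, and the induction proceeds. Concatenating the solutions over all $m$ gives $\ukh \in \Vkh$ solving \eqref{eq:nav_stokes_discrete}, and \Cref{thm:stability_discrete_navier_stokes} provides the uniform bound (though only existence is claimed).

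The main obstacle is a technical one rather than a conceptual one: care is needed to set up the interval-local problem correctly as a genuinely finite-dimensional zero-finding problem with a coercive defining map. Specifically, one must (i) choose an appropriate inner product on $\mathcal P_q(I_m;V_h)$ so that the variational residual becomes an honest vector field $F$, (ii) verify that the jump term $\Oprod{[\ukh]_{m-1},u_{kh,m-1}^+} = \Oprod{u_{kh,m-1}^+ - u_{kh,m-1}^-, u_{kh,m-1}^+}$ contributes nonnegatively up to the known-data term after applying \Cref{lemm:jump_reorder} (which it does, producing $\tfrac12\|u_{kh,m-1}^+\|^2$ plus a jump square minus the datum square), and (iii) handle the $q=1$ case, where the $L^\infty(I_m)$-in-time control of $\ukh$ must be expressed via endpoint values as in Case 2 of the proof of \Cref{thm:stability_discrete_navier_stokes}; but since we only need coercivity for large $\|\ukh|_{I_m}\|$ and not a $T$-independent bound, the argument here is actually simpler than the full stability proof. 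Uniqueness is not asserted and would in any case fail without a smallness or regularity hypothesis, so I would not attempt it.
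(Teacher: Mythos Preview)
Your proposal is correct and follows exactly the approach the paper itself indicates: the paper's proof is a single sentence stating that existence follows from ``a standard fixpoint argument, using the stability result of \Cref{thm:stability_discrete_navier_stokes}'', and your write-up is a faithful (and more detailed) expansion of precisely that argument, including the interval-by-interval Brouwer/Temam lemma and the coercivity coming from $\chatO{u_h,v_h,v_h}=0$ together with \Cref{lemm:jump_reorder}.
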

\begin{proof}
  \blue{The existence} can be shown by applying a standard fixpoint argument, using the stability result of 
  \Cref{thm:stability_discrete_navier_stokes}.
\end{proof}
\blue{
\begin{remark}
  Note that uniqueness of solutions $\ukh$ to \eqref{eq:nav_stokes_discrete} is not 
  known at this point. We shall show this later in \Cref{thm:discrete_nav_stokes_unique},
  as we first need to obtain a first convergence result, that allows us to apply
  \Cref{thm:discrete_gronwall_quadlinear}.
\end{remark}
}
\subsection{$L^2(I;H^1(\Omega))$ error estimates}
\blue{With the stability results presented thus far, we can show a first error estimate 
in the norm of $L^2(I;H^1(\Omega))$. For this norm,}
\cite[Theorem 5.2]{Chrysafinos2010} shows a result,
estimating the error for the Navier-Stokes equations in terms of 
the error of a Stokes problem. 
The discrete Stokes problem there is defined with right hand side
$\partial_t u - \nu \Delta u$, which corresponds to
$f - \nabla p - (u\cdot \nabla) u$, i.e., the pressure is included on the right.
This means, that when applying the corresponding orthogonality relations, a pressure term remains.
In the following result, we use a different right hand side for the discrete Stokes problem,
yielding the following error estimate for the Navier-Stokes equations in $L^2(I;H^1(\Omega))$.
\begin{theorem}\label{thm:nav_stokes_l2h1}
  Let $f \in L^2(I;L^2(\Omega)^2)$, $u_0 \in V$ and let $(u,p)$, $(\ukh,\pkh)$ \blue{solve the} continuous and
  fully discrete Navier-Stokes equations \eqref{eq:nav_stokes_weak_with_pressure} and
\eqref{eq:nav_stokes_discrete_with_pressure} respectively.
\blue{Further let $k$ be small enough.}
Then for any $\chi_{kh} \in \Vkh$, there holds 
\begin{align*}
  \|\nabla(u-\ukh)\|_{L^2(I\times\Omega)} & \le C \left(\| \nabla(u - \chi_{kh})\|_{L^2(I\times\Omega)} 
   + \|\nabla(u - \pi_\tau u)\|_{L^2(I\times \Omega)} + \|\nabla(u - R_h^S(u,p))\|_{L^2(I\times \Omega)}\right).
\end{align*}
\end{theorem}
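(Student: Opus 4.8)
The plan is to carry out a standard error-splitting argument, comparing $\ukh$ not directly to $u$ but to the fully discrete Stokes solution associated to $u$. First I would introduce $(\wkh, r_{kh}) \in \Xkh$ as the fully discrete solution of the Stokes problem \eqref{eq:stokes_discrete} with right-hand side $\tilde f := f - (u \cdot \nabla) u$; this is admissible since by \Cref{thm:nonlinearity_higher_regularity} with $s=2$ we have $\tilde f \in L^2(I;L^2(\Omega)^2)$, and the continuous Stokes solution with that data is exactly $u$ itself (with pressure $p$). Hence \Cref{thm:stokesl2l2_l2h1} applies and yields
\begin{equation*}
  \|\nabla(u - \wkh)\|_{L^2(I\times\Omega)} \le C\left( \|\nabla(u-\chi_{kh})\|_{L^2(I\times\Omega)} + \|\nabla(u - \pi_\tau u)\|_{L^2(I\times\Omega)} + \|\nabla(u - R_h^S(u,p))\|_{L^2(I\times\Omega)}\right).
\end{equation*}
It then remains to estimate $\ekh := \ukh - \wkh \in \Vkh$, and by the triangle inequality the theorem follows.

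To bound $\nabla \ekh$, I would subtract the discrete Stokes equation satisfied by $\wkh$ from the discrete Navier-Stokes equation \eqref{eq:nav_stokes_discrete} satisfied by $\ukh$. Since both are posed on $\Vkh$ with the same $\mathfrak B$-form and the Stokes data was chosen to be $f - (u\cdot\nabla)u$, the defect equation reads, for all $\vkh \in \Vkh$,
\begin{equation*}
  \mathfrak B(\ekh, \vkh) = \IOprod{(u\cdot\nabla)u, \vkh} - \chatIO{\ukh,\ukh,\vkh}.
\end{equation*}
Now I would rewrite the right-hand side using $\ukh = \wkh + \ekh$ and, on $V$, $\hat c = c$, so that $(u\cdot\nabla)u$ tested against $\vkh$ equals $\hat c((u,u,\vkh))$ up to the divergence correction term of \Cref{lemm:c_swap_no_div} (here one must be careful that $\vkh$ is only discretely divergence free, so such correction terms involving $\nabla\cdot\vkh$ and $\nabla\cdot\wkh$ genuinely appear and must be controlled via the approximation properties of $R_h^S$ and the LBB condition). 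Expanding trilinearly, the nonlinear defect becomes a sum of terms of the form $\hat c((\cdot,\cdot,\cdot))$ with arguments drawn from $\{u - \wkh, \ekh, \wkh, u\}$, organized so that at least one slot carries a factor that is either $\ekh$ or the already-estimated Stokes error $u - \wkh$. Testing with $\vkh = \ekh$ kills the $\hat c((\ukh,\ekh,\ekh))$ term by the antisymmetry in \Cref{lemm:chat}, leaving terms like $\hat c((\ekh, \wkh, \ekh))$ and $\hat c((u-\wkh, u, \ekh))$ and $\hat c((u, u - \wkh, \ekh))$.

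The heart of the estimate is then to absorb these into the coercive part $\nu \|\nabla\ekh\|^2$ (plus jump terms, via \Cref{lemm:jump_reorder}) and a Gronwall-type argument. For the term $\hat c((\ekh,\wkh,\ekh))$ I would use the estimate of \Cref{lemm:chat} together with the interpolation inequality \eqref{eq:discrete_w14_interpolation} or the $L^4$-interpolation \eqref{eq:l4_interpol_general} to get something like $C\|\nabla\wkh\|_{L^2}^{1/2}\|A_h\wkh\|_{L^2}^{1/2}\|\ekh\|_{L^2}^{1/2}\|\nabla\ekh\|_{L^2}^{3/2}$, then Young's inequality to split off $\tfrac{\nu}{4}\|\nabla\ekh\|^2$ and a factor $C\|\ekh\|_{L^2}^2$ times a time-dependent weight; the weight is integrable in time because $\wkh$ inherits (via the discrete maximal parabolic regularity / stability of the discrete Stokes problem) the bound $\|A_h\wkh\|_{L^2(I;L^2(\Omega))} \le C(\|\tilde f\|_{L^2(I;L^2)} + \|u_0\|)$, i.e. $\wkh$ is bounded in $L^2(I;H^1)$ uniformly and $A_h\wkh \in L^2(I\times\Omega)$. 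The remaining linear-in-$\ekh$ terms contribute data terms of the form $\|\nabla(u-\wkh)\|_{L^2(I\times\Omega)}$ times $\|\nabla\ekh\|_{L^2(I\times\Omega)}$, again handled by Young. Summing over time intervals and invoking \Cref{thm:discrete_gronwall_quadlinear} (or \Cref{lemm:discrete_gronwall}) gives $\|\nabla\ekh\|_{L^2(I\times\Omega)} \le C\|\nabla(u-\wkh)\|_{L^2(I\times\Omega)}$, with a constant depending only on $\Omega$, $\nu$ and the norms of the data through the stability bounds. I expect the main obstacle to be precisely this absorption step: controlling the $\hat c((\ekh,\wkh,\ekh))$ term requires $\wkh$ in a norm stronger than the energy norm (hence the use of $A_h\wkh \in L^2(I\times\Omega)$ and the discrete inequalities \eqref{eq:discrete_w14_interpolation}–\eqref{eq:discrete_h1_interpolation}), and one must check that the resulting Gronwall weight has finite integral and that "$k$ small enough" is exactly what is needed to absorb any leftover $\|\nabla\ekh\|^2$ contributions coming from jump terms or from inverse estimates on the discrete-in-time structure.
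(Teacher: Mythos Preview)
Your overall strategy matches the paper's: introduce the discrete Stokes solution $\wkh$ (the paper writes $\ukht$) for the data $f - (u\cdot\nabla)u$, estimate $u - \wkh$ via \Cref{thm:stokesl2l2_l2h1}, derive a defect equation for $\ekh = \ukh - \wkh$, test with $\ekh$, kill $\chatIO{\ukh,\ekh,\ekh}$ by antisymmetry, and close by Young plus the discrete Gronwall lemma.

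Two points where the paper is simpler than your sketch are worth noting. First, no divergence correction terms appear: since $\nabla\cdot u = 0$ pointwise, \Cref{lemm:c_swap} gives $\IOprod{(u\cdot\nabla)u,\vkh} = \cIO{u,u,\vkh} = \chatIO{u,u,\vkh}$ for \emph{any} $\vkh$, regardless of whether $\vkh$ is discretely divergence free; the correction in \Cref{lemm:c_swap_no_div} involves $\nabla\cdot$ of the first argument, which here is $u$. Second, for the critical term $\chatIO{\ekh,\wkh,\ekh}$ the paper does not invoke $A_h\wkh$ at all: the Ladyzhenskaya inequality \eqref{eq:l4_interpol_general} alone yields $|\chatO{\ekh,\wkh,\ekh}| \le C\|\ekh\|_{L^2(\Omega)}^{1/2}\|\nabla\ekh\|_{L^2(\Omega)}^{3/2}\|\wkh\|_{H^1(\Omega)}$, and after Young the Gronwall weight is simply $C\|\wkh\|_{L^\infty(I;H^1(\Omega))}^4 k_m$. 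The bound $\|\wkh\|_{L^\infty(I;H^1(\Omega))} \le C$ is taken from \cite[Theorem~4.10]{Chrysafinos2010}, so discrete maximal parabolic regularity for the Stokes projection is not needed here. With the weight proportional to $k_m$, the smallness assumption on $k$ is exactly what makes $\gamma_m < 1$ so that \Cref{thm:discrete_gronwall_quadlinear} applies.
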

\begin{proof}
   The proof uses the same arguments as the one of \cite[Theorem 5.2]{Chrysafinos2010}. We repeat the main
   steps, in order to motivate, why due to the choice of our \blue{instationary} Stokes projection, no error term for 
   the pressures arises on the right hand side.
   \blue{We denote by $e:= u - \ukh$ the error, which we want to estimate,
  and introduce $(\ukht,\pkht) \in \Xkh$ as the instationary Stokes-projection of $(u,p)$, i.e., the solution to
  \begin{equation}\label{eq:discrete_aux_stokes}
    B ((\ukht,\pkht),(\phikh,\psikh)) = \IOprod{f,\vkh} 
    - \cIO{u,u,\vkh} + \Oprod{u_0,v_{kh,0}^+} 
    \quad \text{for all } (\phikh, \psikh) \in \Xkh.
  \end{equation}
  Note that for the definition of this equation, we can equivalently choose $c$ or $\hat c$, as we have
  $u$ with $\nabla \cdot u = 0$ in its first argument.
  There exists a solution $(\ukht,\pkht)$ to this discrete problem,
  since the assumptions on $f$ and $u_0$ yield $(u\cdot \nabla) u \in L^2(I;L^2(\Omega)^2)$,
  see \Cref{thm:nonlinearity_higher_regularity}.
  Furthermore, due to \cite[Theorem 4.10]{Chrysafinos2010}, the solution $\ukht$ to the above discrete Stokes
  problem satisfies a bound
  \begin{equation}\label{eq:discrete_stokes_linfh1bound}
    \|\ukht\|_{L^\infty(I;H^1(\Omega))} \le C,
  \end{equation}
  with $C$ depending on $f,u_0$, but independent of $k,h$.
  We split the velocity error into two components $e = \xi + \eta_{kh}$ where
  $\xi: = u - \ukht$ and $\eta_{kh}: = \ukht - \ukh$.
  The error in the pressure will be denoted by $r := p - \pkh$, and we will split it into the
  same contributions, $r = \omega + \kappa_{kh}$, with $\omega := p - \tilde p_{kh}$ 
  and $\kappa_{kh} := \tilde p_{kh} - \pkh$.
  The error of the Stokes projection $\xi$ is then bounded by \Cref{thm:stokesl2l2_l2h1} since by
  \Cref{thm:nonlinearity_higher_regularity} it holds $f - (u\cdot \nabla) u \in L^2(I;L^2(\Omega)^2)$.
  It remains to estimate $\eta_{kh}$.
  }
   By definition, we have the following Galerkin orthogonalities: for all $(\phikh,\psikh) \in \Xkh$ it holds
   \begin{align*}
      B((u-\ukh,p-\pkh),(\phikh,\psikh))+\chatIO{u,u,\phikh}-\chatIO{\ukh,\ukh,\phikh} &= 0, \\
      B((\xi,\omega),(\phikh,\psikh)) &= 0.
   \end{align*}
   Subtracting the two and choosing $(\phikh,\psikh) = (\eta_{kh},\kappa_{kh})$ yields
   \begin{equation}\label{eq:etakhequation}
      B((\eta_{kh},\kappa_{kh}),(\eta_{kh},\kappa_{kh}))
      +\chatIO{u,u,\eta_{kh}}-\chatIO{\ukh,\ukh,\eta_{kh}} = 0.
   \end{equation}
   The two trilinear forms in \eqref{eq:etakhequation} satisfy the relation
   \begin{equation*}
      \chatIO{u,u,\eta_{kh}}-\chatIO{\ukh,\ukh,\eta_{kh}}
      = \chatIO{\xi,u,\eta_{kh}}+\chatIO{\ukht,\xi,\eta_{kh}}
      +\chatIO{\eta_{kh},\ukht,\eta_{kh}}+\chatIO{\ukh,\eta_{kh},\eta_{kh}},
   \end{equation*}
   Due to \Cref{lemm:chat}, the last term on the right hand side vanishes, and for the remaining ones,
   we obtain by \blue{Hölder's and Young's} inequalities
   \begin{align*}
      \chatIO{\xi,u,\eta_{kh}} &\le \sum_{m=1}^M \|\xi\|_{L^2(I_m;H^1(\Omega))}
      \|u\|_{L^\infty(I;H^1(\Omega))}\|\eta_{kh}\|_{L^2(I_m;H^1(\Omega))}\\
      &\le \sum_{m=1}^M C\|\xi\|_{L^2(I_m;H^1(\Omega))}^2
      \|u\|_{L^\infty(I;H^1(\Omega))}^2 + \frac{\nu}{4} \|\eta_{kh}\|_{L^2(I_m;H^1(\Omega))}^2,\\
      \chatIO{\xi,\ukht,\eta_{kh}} &\le \sum_{m=1}^M \|\xi\|_{L^2(I_m;H^1(\Omega))}
      \|\ukht\|_{L^\infty(I;H^1(\Omega))}\|\eta_{kh}\|_{L^2(I_m;H^1(\Omega))}\\
      &\le \sum_{m=1}^M C\|\xi\|_{L^2(I_m;H^1(\Omega))}^2
      \|\ukht\|_{L^\infty(I;H^1(\Omega))}^2 + \frac{\nu}{4} \|\eta_{kh}\|_{L^2(I_m;H^1(\Omega))},\\
   \end{align*}
   \begin{align*}
      \chatIO{\eta_{kh},\ukht,\eta_{kh}} &\le \sum_{m=1}^M \int_{I_m} \|\eta_{kh}\|_{H^1(\Omega)}^{\frac{3}{2}}
      \|\eta_{kh}\|_{L^2(\Omega)}^{\frac{1}{2}} \|\ukht\|_{H^1(\Omega)}\ dt\\
      & \le \sum_{m=1}^M C\|\ukht\|_{L^\infty(I;H^1(\Omega))}^4 k_m \|\eta_{kh}\|_{L^\infty(I_m;L^2(\Omega))}^2 
      + \frac{\nu}{4} \|\eta_{kh}\|_{L^2(I_m;H^1(\Omega)}^2 .
   \end{align*}
   Due to $\ukh$ and $\ukht$ both being discretely divergence free, all pressure contributions in 
   \eqref{eq:etakhequation} vanish, and with the above estimates, after absorbing all
   $\frac{\nu}{4} \|\eta_{kh}\|_{L^2(I_m;H^1(\Omega))}$ terms to the left, we obtain
   \begin{align*}
      \sum_{m=1}^M \blue{\ImOprod{\partial_t \eta_{kh},\eta_{kh}}} + \frac{\nu}{4}
      \ImOprod{\nabla \eta_{kh},\nabla \eta_{kh}}
      + \sum_{m=1}^M \Oprod{[\eta_{kh}]_{m-1},\eta_{kh,m-1}^+}\\
      \le C\left(\|u\|_{L^\infty(I;H^1(\Omega))}^2 +\|\ukht\|_{L^\infty(I;H^1(\Omega))}^2\right)
      \|\xi\|_{L^2(I;H^1(\Omega))}^2
      + C\|\ukht\|_{L^\infty(I;H^1(\Omega))}^4 \sum_{m=1}^M k_m \|\eta_{kh}\|_{L^\infty(I_m;L^2(\Omega))}^2.
   \end{align*}
   To abbreviate notation we have used here $[\eta_{kh}]_0 := \eta_{kh,0}^+$.
   Choosing $k$ small enough,
   \blue{and using the boundedness of $\|\ukht\|_{L^\infty(I;H^1(\Omega))}$,
     due to \eqref{eq:discrete_stokes_linfh1bound} as well as 
   of $\|u\|_{L^\infty(I;H^1(\Omega))}$, shown in \Cref{thm:h2_regularity},}
   allows us to apply the discrete Gronwall \blue{\Cref{thm:discrete_gronwall_quadlinear}} as in 
   \Cref{thm:stability_discrete_navier_stokes}, which concludes the proof.
\end{proof}

\subsection{Uniqueness and strong stability of discrete solutions}

We next show the \blue{uniqueness and } stability of the discrete solution $\ukh$ in stronger norms,
comparable to the continuous result of \Cref{thm:h2_regularity}.
\blue{These results will be obtained},
by applying the discrete Gronwall \Cref{thm:discrete_gronwall_quadlinear},
to which end we need the two following technical lemmas, guaranteeing, that the coefficients
in the Gronwall lemma become arbitrarily small, uniformly in $m$, as $(k,h) \to 0$.

\begin{lemma}\label{lemm:vanishing_support}
  Let $I \subset \mathbb R$ a bounded interval, $\varepsilon > 0$ and $\zeta \in L^1(I)$.
  Then 
  \begin{equation*}
    \sup_{x \in \bar I} \int_x^{x+\varepsilon} |\zeta(s)| \ ds \xrightarrow{\varepsilon \to 0} 0.
  \end{equation*}
  To keep notation simple, we formally extend $\zeta$ to $[0,T+ \varepsilon]$ by values $0$ such that
  the integration is well defined.
\end{lemma}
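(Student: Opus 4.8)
The plan is to reduce the statement to the well-known absolute continuity of the Lebesgue integral, made uniform in the base point $x$ by a compactness argument. First I would fix a reference value $\varepsilon_0 > 0$ and work on the compact interval $J := [\,\inf I,\ \sup I + \varepsilon_0\,]$, on which the extension of $\zeta$ by zero outside $I$ (as already anticipated in the statement) still belongs to $L^1(J)$. Setting $F(x) := \int_{\inf I}^{x} |\zeta(s)|\,ds$ for $x \in J$, one obtains a finite, nondecreasing, continuous function on the compact set $J$, hence uniformly continuous there. Since for $0 < \varepsilon \le \varepsilon_0$ and $x \in \bar I$ we have $\int_x^{x+\varepsilon} |\zeta(s)|\,ds = F(x+\varepsilon) - F(x)$, the uniform continuity of $F$ immediately yields $\sup_{x \in \bar I}\bigl(F(x+\varepsilon) - F(x)\bigr) \to 0$ as $\varepsilon \to 0$, which is precisely the claim.

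Alternatively, and perhaps more in keeping with the elementary tone of this part of the paper, I would argue by density. Given $\delta > 0$, choose a continuous (or piecewise constant) function $g$ on $J$ with $\|\zeta - g\|_{L^1(J)} < \delta/2$. Then for every $x \in \bar I$,
\begin{equation*}
  \int_x^{x+\varepsilon} |\zeta(s)|\,ds \;\le\; \int_J |\zeta - g|\,ds + \int_x^{x+\varepsilon} |g(s)|\,ds \;\le\; \frac{\delta}{2} + \varepsilon\,\|g\|_{L^\infty(J)},
\end{equation*}
and choosing $\varepsilon < \delta\bigl(2\|g\|_{L^\infty(J)} + 1\bigr)^{-1}$ bounds the right-hand side by $\delta$, uniformly in $x$. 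Letting $\delta \to 0$ finishes the proof.

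There is no genuinely hard step here; the only two points deserving a line of care are (i) the harmless extension of $\zeta$ by zero so that $\int_x^{x+\varepsilon}$ is well defined for $x$ up to $\sup I$ — which the statement already provides for — and (ii) ensuring the bound is uniform over $x \in \bar I$ rather than merely pointwise, which both the uniform-continuity argument and the density argument deliver automatically. I would write up the density version, as it is the shortest and self-contained.
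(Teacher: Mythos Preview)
Your proposal is correct, and both variants you give are genuinely different from the paper's argument. The paper defines $\sigma_\varepsilon(x) := \int_x^{x+\varepsilon} |\zeta(s)|\,ds$, first shows each $\sigma_\varepsilon$ is continuous on $\bar I$ via dominated convergence, then observes that $\sigma_\varepsilon(x) \to 0$ pointwise and monotonically as $\varepsilon \downarrow 0$, and finally invokes Dini's theorem to upgrade to uniform convergence. Your first route instead recognises $\int_x^{x+\varepsilon}|\zeta| = F(x+\varepsilon)-F(x)$ for the antiderivative $F$ and appeals directly to the uniform continuity of $F$ on a compact interval; your second route is the standard $L^1$-density argument. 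Both of your approaches are shorter and more elementary---they avoid the detour through Dini and the separate verification that each $\sigma_\varepsilon$ is continuous---while the paper's proof has the minor pedagogical advantage of making the monotone structure of the family $\{\sigma_\varepsilon\}$ explicit. Either of your versions would be a clean replacement; the uniform-continuity argument is the most direct.
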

\begin{proof}
  The proof relies on dominated convergence and Dini's theorem, see, e.g., \cite[p. 125]{Brokate2015}.
  We first define for fixed $\varepsilon$ the function 
  $\sigma_\varepsilon: \bar I \to \mathbb R, x \mapsto \int_x^{x+\varepsilon} |\zeta(s)| \ ds$.
  Note that for each $\varepsilon > 0$ this function $\sigma_\varepsilon$ is continuous. To see this,
  let $x \in I$ be fixed and let $y \to x$. W.l.o.g. let $y>x$:
  \begin{equation*}
  |\sigma_\varepsilon(x) - \sigma_\varepsilon(y)| 
  \le \left|\int_x^{y} |\zeta(s)| \ ds + \int_{x+\varepsilon}^{y+\varepsilon} |\zeta(s)| \ ds \right|
  = \left| \int_{I} \chi_{x,y}(s)|\zeta(s)| \ ds \right|
  \end{equation*}
  where $\chi_{x,y}(s) = 1$ if $s \in (x,y) \cup (x+\varepsilon, y + \varepsilon)$, and $0$ otherwise.
  For a fixed $s$, the integrand $\chi_{x,y}(s)|\zeta(s)| \to 0$ as $y \to x$, thus the integrand
  converges pointwise to $0$. By the dominated convergence theorem, this means that the integral converges to
  $0$. For fixed $\varepsilon> 0$ this shows the continuity of $\sigma_\varepsilon$.
  Moreover, by the same argument we can show, that for fixed $x$ and $\varepsilon \to 0$ it holds
  $\sigma_\varepsilon(x) \to 0$. Thus the sequence $\sigma_\varepsilon$ converges pointwise to $0$.
  Note moreover, that for $\delta > \varepsilon$, and fixed $x$, it holds $\sigma_\delta(x) \ge \sigma_\varepsilon(x)$,
  as the integration covers a larger interval. We thus have shown, that when 
  $\varepsilon \to 0$ monotonically, also this pointwise convergence of $\sigma_\varepsilon(x)$ is monotone.
  Hence we can apply Dini's theorem, see \cite[p. 125]{Brokate2015}, to obtain
  \begin{equation*}
    \|\sigma_\varepsilon\|_{L^\infty(I)} \rightarrow 0, \quad \text{ as } \varepsilon \rightarrow 0.
  \end{equation*}
\end{proof}
\begin{lemma}\label{lemm:vanishing_support_u}
  Let $u \in L^2(I;H^1_0(\Omega)^2)$, and $u_{kh} \in \Ukh $ such that
  $\|u - u_{kh}\|_{L^2(I;H^1(\Omega))} \to 0$ as $(k,h) \to 0$. Then it holds
  \begin{align*}
    \sup_{1 \le m \le M} \int_{I_{m,k}} \|\nabla u\|_{L^2(\Omega)}^2 \ dt  \xrightarrow{(k,h) \to 0} 0
    \quad \text{and} \quad
    \sup_{1 \le m \le M} \int_{I_{m,k}} \|\nabla u_{kh}\|_{L^2(\Omega)}^2 \ dt \xrightarrow{(k,h) \to 0} 0.
  \end{align*}
\end{lemma}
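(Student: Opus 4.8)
The first statement concerns $\zeta := \|\nabla u\|_{L^2(\Omega)}^2 \in L^1(I)$, and we already have Lemma~\ref{lemm:vanishing_support}. Observe that for each $m$, the interval $I_{m,k} = (t_{m-1},t_m]$ has length $k_m \le k$, hence $I_{m,k} \subseteq (t_{m-1}, t_{m-1}+k]$. Therefore
\[
  \sup_{1\le m\le M} \int_{I_{m,k}} \zeta(s)\,ds \le \sup_{x \in \bar I} \int_x^{x+k} |\zeta(s)|\,ds,
\]
and the right-hand side tends to $0$ as $k\to 0$ by Lemma~\ref{lemm:vanishing_support}. This already disposes of the continuous part; the only care needed is that $u \in L^2(I;H^1_0(\Omega)^2)$ indeed gives $\zeta \in L^1(I)$, which is immediate.

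For the discrete part, the plan is to reduce to the continuous part by the triangle inequality plus the convergence hypothesis $\|u - u_{kh}\|_{L^2(I;H^1(\Omega))}\to 0$. On each interval $I_{m,k}$, writing $\nabla u_{kh} = \nabla u - \nabla(u-u_{kh})$, we get
\[
  \left(\int_{I_{m,k}} \|\nabla u_{kh}\|_{L^2(\Omega)}^2\,dt\right)^{1/2}
  \le \left(\int_{I_{m,k}} \|\nabla u\|_{L^2(\Omega)}^2\,dt\right)^{1/2}
  + \|\nabla(u-u_{kh})\|_{L^2(I_{m,k}\times\Omega)}.
\]
The second term on the right is bounded by $\|\nabla(u-u_{kh})\|_{L^2(I\times\Omega)}$, which goes to $0$ uniformly in $m$ by assumption; the first term goes to $0$ uniformly in $m$ by the part already proved. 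Squaring (using $(a+b)^2 \le 2a^2 + 2b^2$) and taking the supremum over $m$ then yields the claim.

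I do not expect any genuine obstacle here: the statement is essentially a packaging of Lemma~\ref{lemm:vanishing_support} together with a one-line triangle-inequality argument. The only point requiring a moment's thought is the alignment of the $\sup_m$ over the \emph{finite} partition $\{I_{m,k}\}$ with the $\sup_{x\in\bar I}$ over the \emph{continuum} in Lemma~\ref{lemm:vanishing_support}; this is handled by the inclusion $I_{m,k}\subseteq(t_{m-1},t_{m-1}+k]$ noted above, so that the discrete supremum is dominated by the continuous one for the fixed window length $\varepsilon = k$, after which one lets $k\to 0$.
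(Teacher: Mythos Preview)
Your proposal is correct and follows essentially the same approach as the paper: the continuous part is reduced to Lemma~\ref{lemm:vanishing_support} via the inclusion $I_{m,k}\subseteq(t_{m-1},t_{m-1}+k]$, and the discrete part is handled by inserting $\pm u$ and using the triangle inequality together with the assumed convergence, picking up the same factor of $2$ from $(a+b)^2\le 2a^2+2b^2$.
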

\begin{proof}
  We first show the statement for $u$. To this end, note that for $k = \max_{1 \le m \le M} |I_{m,k}|$
  \begin{equation*}
      \sup_{1 \le m \le M} \int_{I_{m,k}} \|\nabla u\|_{L^2(\Omega)}^2 \ dt
      \le \sup_{x \in \bar I} \int_{x}^{x+k} \|\nabla u\|_{L^2(\Omega)}^2 \ dt \xrightarrow{k \to 0} 0
  \end{equation*}
  by \Cref{lemm:vanishing_support}.
  In order to show the result for the discrete solution $\ukh$, we cannot directly apply the previous 
  lemma, as $\ukh$ depends on $k$. We thus insert
  insert $\pm u$ and apply the triangle inequality to obtain
  \begin{align*}
      \sup_{1 \le m \le M} \int_{I_{m,k}} \|\nabla u_{kh}\|_{L^2(\Omega)}^2 \ dt
      & \le 2\sup_{1 \le m \le M} \int_{I_{m,k}} \|\nabla u\|_{L^2(\Omega)}^2 \ dt
      + 2\int_{I} \|\nabla (u - u_{kh})\|_{L^2(\Omega)}^2 \ dt.
  \end{align*}
  With the claim for $u$ and $u_{kh} \to u$ in $L^2(I;H^1(\Omega))$, we have shown the claim for $u_{kh}$.
\end{proof}
\blue{With these technical lemmas, we can show two results, the uniqueness of solutions $\ukh$ and 
  their boundedness in stronger norms. We first obtain the following uniqueness result.
  \begin{theorem}\label{thm:discrete_nav_stokes_unique}
    Let $f \in L^2(I;L^2(\Omega)^2)$ and $u_0 \in V$. Then for $(k,h)$ small enough, the solution
    $\ukh$ to \eqref{eq:nav_stokes_discrete} is unique.
  \end{theorem}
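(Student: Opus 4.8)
The plan is a standard energy argument applied to the difference of two solutions; the only delicate point is that the coefficients entering the discrete Gronwall lemma must be made small, which is precisely why the preceding convergence result \Cref{thm:nav_stokes_l2h1} is needed. Let $\ukh^{1},\ukh^{2}\in\Vkh$ both solve \eqref{eq:nav_stokes_discrete} and set $e:=\ukh^{1}-\ukh^{2}\in\Vkh$. Since (together with suitable pressures) each solution also satisfies the equivalent formulation \eqref{eq:nav_stokes_discrete_with_pressure}, \Cref{thm:nav_stokes_l2h1} applies to $\ukh^{1}$; combining it with the interpolation estimates of \Cref{ass:interpolation_operators}, the standard $\pi_\tau$ error bounds, and the regularity $u\in L^2(I;H^2(\Omega)^2)\cap H^1(I;L^2(\Omega)^2)$, $p\in\LtwoHone$ from \Cref{thm:h2_regularity}, all three best-approximation terms on the right of \Cref{thm:nav_stokes_l2h1} vanish as $(k,h)\to0$, so that $\|\nabla(u-\ukh^{1})\|_{\LtwoLtwo}\to0$. \Cref{lemm:vanishing_support_u} then yields
\begin{equation*}
  \rho_{kh}^{2}:=\sup_{1\le m\le M}\int_{I_{m}}\|\nabla \ukh^{1}\|_{L^2(\Omega)}^{2}\,dt\xrightarrow{(k,h)\to0}0.
\end{equation*}

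Next I would subtract the two instances of \eqref{eq:nav_stokes_discrete} and test with $e|_{I_m}$ (extended by zero outside $I_m$). By trilinearity, $\chatO{\ukh^{1},\ukh^{1},e}-\chatO{\ukh^{2},\ukh^{2},e}=\chatO{e,\ukh^{1},e}+\chatO{\ukh^{2},e,e}$, and the last term vanishes by \Cref{lemm:chat}. Using \Cref{lemm:jump_reorder} and the convention $e_{kh,0}^{-}:=0$, this gives on each interval
\begin{equation*}
  \frac{1}{2}\|e_{m}^{-}\|_{L^2(\Omega)}^{2}+\frac{1}{2}\|[e]_{m-1}\|_{L^2(\Omega)}^{2}+\nu\|\nabla e\|_{L^2(I_m\times\Omega)}^{2}-\frac{1}{2}\|e_{m-1}^{-}\|_{L^2(\Omega)}^{2}=-\int_{I_m}\chatO{e,\ukh^{1},e}\,dt.
\end{equation*}
The key step is the trilinear bound for the right-hand side: writing $\hat c=\frac{1}{2}c-\frac{1}{2}c$, applying \Cref{lemm:c_swap}, the interpolation \eqref{eq:l4_interpol_general}, the Poincaré inequality on $\Hozs$, Young's inequality, and the uniform bound $\|\ukh^{1}\|_{\LinfLtwo}\le C$ from \Cref{thm:stability_discrete_navier_stokes}, one obtains pointwise in time
\begin{equation*}
  \chatO{e,\ukh^{1},e}\le\frac{\nu}{2}\|\nabla e\|_{L^2(\Omega)}^{2}+C\|e\|_{L^2(\Omega)}^{2}\|\nabla \ukh^{1}\|_{L^2(\Omega)}^{2}.
\end{equation*}
The point of this particular form is that it retains a full $L^2(\Omega)$-factor on $e$ — so that no inverse estimate in time is needed — while only a non-squared $\|\nabla\ukh^{1}\|$ is traded against $\|\nabla e\|$.

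Summing the energy identity over $m=1,\dots,n$, absorbing the $\|\nabla e\|^2$-contribution to the left, and estimating $\int_{I_m}\|e\|_{L^2(\Omega)}^{2}\|\nabla\ukh^{1}\|_{L^2(\Omega)}^{2}\,dt\le\|e\|_{L^\infty(I_m;L^2(\Omega))}^{2}\,\rho_{kh}^{2}$, the $L^\infty(I_m;L^2(\Omega))$-norm of $e$ is controlled through the interval endpoints and jumps exactly as in the proof of \Cref{thm:stability_discrete_navier_stokes} (separately for $q=0$ and $q=1$). With $x_n^{2}:=\|e_{n}^{-}\|_{L^2(\Omega)}^{2}+\sum_{m=1}^{n}\|[e]_{m-1}\|_{L^2(\Omega)}^{2}$ and $b_m:=\frac{\nu}{2}\|\nabla e\|_{L^2(I_m\times\Omega)}^{2}$, this produces an estimate of the form $x_n^{2}+\sum_{m=1}^{n}b_m\le C\rho_{kh}^{2}\sum_{m=0}^{n}x_m^{2}$ with $x_0=0$. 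For $(k,h)$ small enough one has $C\rho_{kh}^{2}<1$, so \Cref{thm:discrete_gronwall_quadlinear} applies with $c_m=d_m=B=0$ and forces $x_n=0$ and $\nabla e\equiv0$, hence $e\equiv0$ by Poincaré, which proves uniqueness.

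The main obstacle is exactly establishing the trilinear estimate in the displayed form — keeping the $L^2(\Omega)$-factor on $e$ and isolating the smallness carried by $\rho_{kh}$ — and, upstream of it, the fact that one genuinely needs the convergence $\|\nabla(u-\ukh^{1})\|_{\LtwoLtwo}\to0$ from \Cref{thm:nav_stokes_l2h1} together with \Cref{lemm:vanishing_support_u}: the naive bound $\chatO{e,\ukh^{1},e}\le C\|\nabla e\|_{L^2(\Omega)}^{2}\|\nabla\ukh^{1}\|_{L^2(\Omega)}$ would only lead to a Gronwall coefficient controlled by $\|\ukh^{1}\|_{\LtwoHone}$, which does not vanish and hence cannot be absorbed.
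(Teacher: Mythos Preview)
Your proposal is correct and follows essentially the same approach as the paper: subtract the two discrete equations, test with the difference, reduce the nonlinear contribution to a single trilinear term of the form $\chatO{e,\ukh^{j},e}$, estimate it so that a full $\|e\|_{L^2(\Omega)}^{2}$ remains multiplied by $\|\nabla\ukh^{j}\|_{L^2(\Omega)}^{2}$, and then apply \Cref{thm:discrete_gronwall_quadlinear} after invoking \Cref{thm:nav_stokes_l2h1} and \Cref{lemm:vanishing_support_u} to make the Gronwall coefficients small. The only cosmetic differences are that the paper linearizes around $\ukh^{2}$ instead of $\ukh^{1}$ and carries out the H\"older/Young estimates in integrated rather than pointwise form; your displayed trilinear bound implicitly absorbs the factor $(1+\|\ukh^{1}\|_{L^\infty(I;L^2(\Omega))}^{2})$ into the constant via \Cref{thm:stability_discrete_navier_stokes}, which is exactly what the paper does as well.
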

  \begin{proof}
    \blue{To show uniqueness, we assume two solutions $\ukh^1, \ukh^2$ and define $\ekh:= \ukh^1 - \ukh^2$.
  Testing the equations for $m=1,...,M$ with $\ekh \chi_m$, where $\chi_m$ denotes the characteristic
  function of the interval $I_m$, and subtracting them leads to
  \begin{equation*}
    \mathfrak B (\ekh,\ekh \chi_m) + \chatIO{\ukh^1,\ukh^1,\ekh \chi_m} - \chatIO{\ukh^2,\ukh^2,\ekh \chi_m} = 0
  \end{equation*}
  Adding and subtracting $\chatIO{\ukh^1,\ukh^2,\ekh \chi_m}$ and applying \Cref{lemm:chat} yields
  \begin{equation*}
    \mathfrak B (\ekh,\ekh \chi_m) + \chatIO{\ekh,\ukh^2,\ekh \chi_m}= 0.
  \end{equation*}
  Applying \Cref{lemm:jump_reorder}, the bilinear form can be written  as 
  \begin{equation*}
    \mathfrak B (\ekh,\ekh \chi_m) = 
    \frac{1}{2} \left(\|e_{kh,m}^-\|_{L^2(\Omega)}^2  
    + \|[\ekh]_{m-1}\|_{L^2(\Omega)}^2 - \|e_{kh,m-1}^-\|_{L^2(\Omega)}^2 \right)
    + \nu\|\nabla \ekh\|_{L^2(I_m;L^2(\Omega))}^2.
  \end{equation*}
  From the definition of $\hat c$, the estimates of \Cref{lemm:c_swap}, and Hölder's inequality in time,
  we obtain
  \begin{align*}
    \chatIO{\ekh,\ukh^2,\ekh \chi_m} &=
    \frac{1}{2} \left[\ImOprod{(\ekh \cdot \nabla)\ukh^2,\ekh} 
        - \ImOprod{(\ekh \cdot \nabla)\ekh,\ukh^2}\right]\\
                                     &\le C \|\ekh\|_{L^\infty(I_m;L^2(\Omega))}\|\nabla \ekh\|_{L^2(I_m;L^2(\Omega))}\| \nabla \ukh^2\|_{L^2(I_m;L^2(\Omega))}\\
    & \quad + C \|\ekh\|_{L^\infty(I_m;L^2(\Omega))}^{\frac{1}{2}}
      \|\nabla \ekh\|_{L^2(I_m;L^2(\Omega))}^{\frac{3}{2}}
        \|\ukh^2\|_{L^\infty(I_m;L^2(\Omega))}^{\frac{1}{2}} 
        \| \nabla \ukh^2\|_{L^2(I_m;L^2(\Omega))}^{\frac{1}{2}}.
  \end{align*}
  We can apply Young's inequality and obtain
  \begin{align*}
    \chatIO{\ekh,\ukh^2,\ekh \chi_m} &\le
    \frac{\nu}{2}\|\nabla \ekh\|_{L^2(I_m;L^2(\Omega))}^2
    + C \|\ekh\|_{L^\infty(I_m;L^2(\Omega))}^2 \left(1 + \|\ukh^2\|_{L^\infty(I_m;L^2(\Omega))}^2\right)
    \| \nabla \ukh^2\|_{L^2(I_m;L^2(\Omega))}^2.
  \end{align*}
  where the first summand can be absorbed into $\mathfrak B(\ekh,\ekh \chi_m)$.
  Thus on each time interval, there holds the bound
  \begin{align*}
    \frac{1}{2} \left(\|e_{kh,m}^-\|_{L^2(\Omega)}^2  
    + \|[\ekh]_{m-1}\|_{L^2(\Omega)}^2 - \|e_{kh,m-1}^-\|_{L^2(\Omega)}^2 
    + \nu\|\nabla \ekh\|_{L^2(I_m;L^2(\Omega))}^2\right)\\
    \le C \|\ekh\|_{L^\infty(I_m;L^2(\Omega))}^2 \left(1 + \|\ukh^2\|_{L^\infty(I_m;L^2(\Omega))}^2\right)
    \| \nabla \ukh^2\|_{L^2(I_m;L^2(\Omega))}^2.
  \end{align*}
  Multiplying the derived inequality by 2 and summing up the inequalities for $m=1,...,n$ yields 
  for any $n \in \{1,...,M\}$
  \begin{align*}
    \|e_{kh,n}^-\|_{L^2(\Omega)}^2 + \sum_{m=1}^n 
    \left(\|[\ekh]_{m-1}\|_{L^2(\Omega)}^2 + \nu \|\nabla \ekh\|_{L^2(I_m;L^2(\Omega))}^2\right)\\
    \le 
    C \sum_{m=1}^n \|\ekh\|_{L^\infty(I_m;L^2(\Omega))}^2 
    \left(1 + \|\ukh^2\|_{L^\infty(I_m;L^2(\Omega))}^2\right) 
    \| \nabla \ukh^2\|_{L^2(I_m;L^2(\Omega))}^2.
  \end{align*}
  Here we have used $e_{kh,0}^- = 0$ as $u_{kh}^1$ and $u_{kh}^2$ satisfy the same initial condition.
  As in the proof of \Cref{thm:stability_discrete_navier_stokes}, 
  $\|\ekh\|_{L^\infty(I_m;L^2(\Omega))}$ can be bounded by evaluations of one-sided limits at the time nodes.
  Due to $f \in L^2(I;L^2(\Omega)^2)$ and $u_0 \in V$, \Cref{thm:nav_stokes_l2h1} holds,
  and thus \Cref{lemm:vanishing_support_u} yields $\| \nabla \ukh^2\|_{L^2(I_m;L^2(\Omega))}^2 \to 0$
  uniformly in $m$ for $(k,h)\to 0$. Together with \Cref{thm:stability_discrete_navier_stokes}, bounding 
  $\|\ukh^2\|_{L^\infty(I_m;L^2(\Omega))}$, we can choose the discretization fine enough, such that
  $C (1 + \|\ukh^2\|_{L^\infty(I_m;L^2(\Omega))}^2)\| \nabla \ukh^2\|_{L^2(I_m;L^2(\Omega))}^2< \frac{1}{2}$
  for all $m$.
  All in all, as in \Cref{thm:stability_discrete_navier_stokes}, an application of 
  the Gronwall \Cref{thm:discrete_gronwall_quadlinear} shows that $\ekh = 0$,
  concluding the proof of uniqueness.
  }
  \end{proof}
}
We now turn toward proving the stability of $\ukh$ in stronger norms.
The proof follows the steps of the continuous result, shown in \cite[Chapter 3, Theorem 3.10]{Temam1977},
using the discrete analogons of the inequalities presented in \Cref{sect:space_discretization}, and 
\blue{the} discrete Gronwall \blue{\Cref{thm:discrete_gronwall_quadlinear}}.
For the application of the lemma, we need coefficients, that 
become small, as $k \to 0$. These coefficients depend on $\ukh$, and thus on $k,h$, 
\blue{hence we apply \Cref{thm:nav_stokes_l2h1} together with \Cref{lemm:vanishing_support_u},
  in order to have coefficients, that converge to 0 uniformly 
in $m$.}
There holds the following.
\begin{theorem}\label{thm:nav_stokes_discrete_h2}
   Let $f\in L^2(I;L^2(\Omega)^2)$ and $u_0 \in V$, 
   \blue{then for $(k,h)$ small enough, the unique solution $\ukh \in \Vkh$ to \eqref{eq:nav_stokes_discrete}
   satisfies}
%
\begin{align*}
   \|\ukh\|_{L^\infty(I;H^1(\Omega))} + \|A_h \ukh\|_{L^2(I;L^2(\Omega))}
   \le & C_1\exp{\left(C_2\|\ukh\|_{L^\infty(I;L^2(\Omega))}^2\|\nabla \ukh\|_{L^2(I;L^2(\Omega))}^2\right)}\\
    & \times \left(\|f\|_{L^2(I;L^2(\Omega))}+\|u_0\|_{H^1(\Omega)}\right),
\end{align*}
   with constants $C_1, C_2$ independent of $k,h$.
   The $L^\infty(I;L^2(\Omega))$ and $L^2(I;H^1(\Omega))$ norms of $\ukh$ can be estimated by
   the results of \Cref{thm:stability_discrete_navier_stokes}.
\end{theorem}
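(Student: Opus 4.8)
The plan is to transfer the continuous $H^2$-argument of \Cref{thm:h2_regularity} (cf.\ \cite[Chapter~3, Theorem~3.10]{Temam1977}) to the fully discrete level by testing \eqref{eq:nav_stokes_discrete} with the discrete Stokes operator applied to the solution. Since $A_h$ maps $V_h$ into $V_h$, the function $A_h\ukh$ belongs to $\Vkh$ and is an admissible test function. Testing on a single interval $I_m$ with $A_h\ukh|_{I_m}$ and using that $A_h$ is self-adjoint on $V_h$ with $\Oprod{A_h w_h,v_h}=\Oprod{\nabla w_h,\nabla v_h}$, the viscous term becomes $\nu\ImOprod{\nabla\ukh,\nabla A_h\ukh}=\nu\|A_h\ukh\|_{\LtwomLtwo}^2$, and the time-derivative term together with the jump contribution combine — by the same computation as in \Cref{lemm:jump_reorder}, now applied to the quantity $\|\nabla\ukh\|_{L^2(\Omega)}^2$ and using $\Oprod{[\ukh]_{m-1},A_h v_h}=\Oprod{\nabla[\ukh]_{m-1},\nabla v_h}$ — into $\tfrac12\|\nabla u_{kh,m}^-\|_{L^2(\Omega)}^2-\tfrac12\|\nabla u_{kh,m-1}^-\|_{L^2(\Omega)}^2+\tfrac12\|\nabla[\ukh]_{m-1}\|_{L^2(\Omega)}^2$, with the convention $u_{kh,0}^-:=u_0$. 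Only on the first interval does the mismatch between $u_0$ and $u_{kh,0}^+$ matter; it produces the term $\|\nabla u_{kh,0}^+\|_{L^2(\Omega)}^2-\Oprod{\nabla\Ph u_0,\nabla u_{kh,0}^+}$, whose negative part is absorbed into half of $\|\nabla u_{kh,0}^+\|_{L^2(\Omega)}^2$ at the price of a data term bounded by $C\|\nabla\Ph u_0\|_{L^2(\Omega)}^2\le C\|u_0\|_{H^1(\Omega)}^2$, using the standard $H^1$-stability of $\Ph$ on quasi-uniform meshes.

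The central point is the treatment of $\chatIO{\ukh,\ukh,A_h\ukh}$, where, unlike in the continuous setting, $\ukh$ is only discretely divergence free. I would use \Cref{lemm:c_swap_no_div} to write $\chatO{\ukh,\ukh,A_h\ukh}=\cO{\ukh,\ukh,A_h\ukh}+\tfrac12\Oprod{\nabla\cdot\ukh,\ukh\cdot A_h\ukh}$. The genuine trilinear part is bounded by $\|\ukh\|_{L^4(\Omega)}\|\nabla\ukh\|_{L^4(\Omega)}\|A_h\ukh\|_{L^2(\Omega)}$ and then, via \eqref{eq:l4_interpol_general} (together with Poincaré) and the discrete $W^{1,4}$-interpolation estimate \eqref{eq:discrete_w14_interpolation}, by $C\|\ukh\|_{L^2(\Omega)}^{1/2}\|\nabla\ukh\|_{L^2(\Omega)}\|A_h\ukh\|_{L^2(\Omega)}^{3/2}$; the divergence correction is bounded by $\|\nabla\ukh\|_{L^2(\Omega)}\|\ukh\|_{L^\infty(\Omega)}\|A_h\ukh\|_{L^2(\Omega)}$ and, using the discrete Gagliardo--Nirenberg inequality \eqref{eq:discrete_stokes_gagliardo_nirenberg}, by the very same expression. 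Hence $|\chatO{\ukh,\ukh,A_h\ukh}|\le C\|\ukh\|_{L^2(\Omega)}^{1/2}\|\nabla\ukh\|_{L^2(\Omega)}\|A_h\ukh\|_{L^2(\Omega)}^{3/2}$, and Young's inequality with exponents $4/3$ and $4$ absorbs $\tfrac14\nu\|A_h\ukh\|_{\LtwomLtwo}^2$ into the left at the cost of $C\nu^{-3}\int_{I_m}\|\ukh\|_{L^2(\Omega)}^2\|\nabla\ukh\|_{L^2(\Omega)}^4\,dt$; the right-hand side term $\ImOprod{f,A_h\ukh}$ is handled by Cauchy--Schwarz and Young, absorbing another $\tfrac14\nu\|A_h\ukh\|_{\LtwomLtwo}^2$.

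Next I would sum the per-interval estimates for $m=1,\dots,n$ and set $x_n^2:=\|\nabla u_{kh,n}^-\|_{L^2(\Omega)}^2+\sum_{m=1}^n\|\nabla[\ukh]_{m-1}\|_{L^2(\Omega)}^2$. For $q\in\{0,1\}$ the time-polynomial $\nabla\ukh|_{I_m}$ satisfies $\|\nabla\ukh\|_{L^\infty(I_m;L^2(\Omega))}^2\le C(x_m^2+x_{m-1}^2)$, so that $\int_{I_m}\|\ukh\|_{L^2(\Omega)}^2\|\nabla\ukh\|_{L^2(\Omega)}^4\,dt\le\gamma_m(x_m^2+x_{m-1}^2)$ with $\gamma_m:=C\nu^{-3}\|\ukh\|_{\LinfLtwo}^2\int_{I_m}\|\nabla\ukh\|_{L^2(\Omega)}^2\,dt$. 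After an index shift this yields $x_n^2+\nu\sum_{m=1}^n\|A_h\ukh\|_{\LtwomLtwo}^2\le C(\|u_0\|_{H^1(\Omega)}^2+\nu^{-1}\|f\|_{\LtwoLtwo}^2)+\sum_{m=1}^n\tilde\gamma_m x_m^2$ with $\sup_m\tilde\gamma_m\lesssim\sup_m\gamma_m$, which is of the form covered by \Cref{lemm:discrete_gronwall} (equivalently \Cref{thm:discrete_gronwall_quadlinear} with $d_m=c_m=0$). Applying it requires $\tilde\gamma_m<1$ uniformly in $m$, which is where $(k,h)$ small enters: \Cref{thm:stability_discrete_navier_stokes} bounds $\|\ukh\|_{\LinfLtwo}$ by the data uniformly in $(k,h)$, while \Cref{thm:nav_stokes_l2h1} together with the approximation properties of $\pi_\tau$ and $R_h^S$ gives $\|u-\ukh\|_{\LtwoHone}\to0$, so \Cref{lemm:vanishing_support_u} yields $\sup_m\int_{I_m}\|\nabla\ukh\|_{L^2(\Omega)}^2\,dt\to0$, hence $\sup_m\tilde\gamma_m\to0$, as $(k,h)\to0$. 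The Gronwall lemma then produces $x_n^2+\nu\sum_{m=1}^n\|A_h\ukh\|_{\LtwomLtwo}^2\le\exp(C\sum_m\gamma_m)\,C(\|u_0\|_{H^1(\Omega)}^2+\nu^{-1}\|f\|_{\LtwoLtwo}^2)$; since $\sum_m\gamma_m\le C\nu^{-3}\|\ukh\|_{\LinfLtwo}^2\|\nabla\ukh\|_{\LtwoLtwo}^2$, taking the supremum over $n$ (using $\|\ukh\|_{\LinfHone}^2\le C\sup_n x_n^2$ via Poincaré and the per-interval $L^\infty$-in-time bound) and taking square roots gives the assertion, with $\|\ukh\|_{\LinfLtwo}$ and $\|\nabla\ukh\|_{\LtwoLtwo}$ estimated by \Cref{thm:stability_discrete_navier_stokes}.

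The main obstacle is the nonlinear term in the discretely-but-not-pointwise divergence free space $\Vkh$: namely, bounding the divergence correction produced by \Cref{lemm:c_swap_no_div} by the same cubic expression in $\|\ukh\|_{L^2(\Omega)}$, $\|\nabla\ukh\|_{L^2(\Omega)}$ and $\|A_h\ukh\|_{L^2(\Omega)}$ as the genuine trilinear part — for which the discrete Gagliardo--Nirenberg inequality \eqref{eq:discrete_stokes_gagliardo_nirenberg} is exactly the right tool — and, secondarily, ensuring via \Cref{thm:nav_stokes_l2h1} and \Cref{lemm:vanishing_support_u} that the Gronwall weights $\tilde\gamma_m$ fall below $1$ uniformly in $m$ once $(k,h)$ is small enough, so that the exponential in the final bound depends only on $\|\ukh\|_{\LinfLtwo}^2\|\nabla\ukh\|_{\LtwoLtwo}^2$ and not directly on $T$.
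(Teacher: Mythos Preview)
Your proposal is correct and follows essentially the same approach as the paper: test with $A_h\ukh|_{I_m}$, handle the nonlinear term via \Cref{lemm:c_swap_no_div} together with the discrete Gagliardo--Nirenberg inequality \eqref{eq:discrete_stokes_gagliardo_nirenberg} to arrive at $C\|\ukh\|_{L^2(\Omega)}^{1/2}\|\nabla\ukh\|_{L^2(\Omega)}\|A_h\ukh\|_{L^2(\Omega)}^{3/2}$, and close by the discrete Gronwall lemma after checking $\gamma_m\to 0$ via \Cref{thm:nav_stokes_l2h1} and \Cref{lemm:vanishing_support_u}. The only minor points are that the paper bounds both pieces of the trilinear term at once using $\|\ukh\|_{L^\infty(\Omega)}$ rather than splitting into an $L^4$--$W^{1,4}$ part and a divergence part, and that the $H^1$-stability you invoke for $\Ph$ is not the standard $L^2$-projection result (since $\Ph$ maps into $V_h$, not $U_h$) but the divergence-free variant the paper cites from \cite[Lemma~5.4]{vexler_l2i_2023}.
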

\begin{proof}
  \blue{For $m=1,...,M$,} we test \eqref{eq:nav_stokes_discrete} with $A_h \ukh |_{I_m}$, which yields
  \blue{with the convention $[\ukh]_0 := u_{kh,0}^+ - u_0$}
   \begin{align*}
     \blue{\ImOprod{\partial_t \ukh,A_h \ukh}} + \nu \ImOprod{\nabla \ukh,\nabla A_h \ukh}
     &\\
     + \blue{\Oprod{[\ukh]_{m-1},A_h u_{kh,m-1}^+}} + \chatIO{\ukh,\ukh,A_h \ukh \chi_{I_m}} 
     &= \ImOprod{f,A_h \ukh},
   \end{align*}
   \blue{From the above identity, for $m=2,...,M$,} the definition of $A_h$ gives
   \begin{equation}\label{eq:discrete_Autested}
     \begin{aligned}
       \blue{\ImOprod{\partial_t \nabla \ukh,\nabla \ukh}} + \nu \|A_h \ukh\|_{L^2(I_m;L^2(\Omega))}^2 
     &\\
     +\blue{\Oprod{[\nabla \ukh]_{m-1},\nabla u_{kh,m-1}^+}} + \chatIO{\ukh,\ukh,A_h \ukh \chi_{I_m}}
     &= \ImOprod{f,A_h \ukh}. 
     \end{aligned}
   \end{equation}
   Here the terms containing time derivatives and jumps can be combined according to \Cref{lemm:jump_reorder}.
   \blue{Some care has to be taken on the first time interval. It holds 
\begin{align*}
  \Oprod{u_0,A_h u_{kh,0}^+} &=- \Oprod{u_0, \Ph \Delta_h u_{kh,0}^+}=
   - \Oprod{\Ph u_0,\Delta_h u_{kh,0}^+} = \Oprod{\nabla \Ph u_0,\nabla u_{kh,0}^+}
\end{align*}
Thus on the first time interval, with the same arguments as in the proof of \Cref{lemm:jump_reorder}, it holds 
\begin{equation*}
  \IjOprod{\partial_t \ukh,A_h \ukh}{1} + \Oprod{u_{kh,0}^+ - u_0,A_h u_{kh,0}^+}
  = \frac{1}{2} \left(\|\nabla u_{kh,1}^-\|_{L^2(\Omega)}^2 + \|\nabla (u_{kh,0}^+-\Ph u_0)\|_{L^2(\Omega)}^2 
  - \|\nabla \Ph u_0\|_{L^2(\Omega)}^2\right).
\end{equation*}
For a more compact notation, we shall write $[\nabla \ukh]_0 := \nabla (u_{kh,0}^+-\Ph u_0)$.
Note the need to include the projection on the right hand side.
With this slight abuse of notation, \eqref{eq:discrete_Autested} also holds on the first time interval.
Since $u_0 \in V$ we can use the stability of $\Ph$ in $H^1$ for continuously divergence free functions,
see \cite[Lemma 5.4]{vexler_l2i_2023}, yielding an estimate 
\begin{equation}\label{eq:inititial_data_projection_stability}
\|\nabla \Ph u_0\|_{L^2(\Omega)} \le C\|\nabla u_0\|_{L^2(\Omega)}.
\end{equation}
}
\blue{For the terms of \eqref{eq:discrete_Autested} involving the right hand side $f$}, it holds
\begin{equation*}
   \ImOprod{f,A_h \ukh} \le \|f\|_{L^2(I_m;L^2(\Omega))}\|A_h \ukh\|_{L^2(I_m;L^2(\Omega))}
   \le C \|f\|_{L^2(I_m;L^2(\Omega))}^2 + \frac{\nu}{4} \|A_h \ukh\|_{L^2(I_m;L^2(\Omega))}^2,
\end{equation*}
where we can absorb the $A_h \ukh$ term.
We now turn towards estimating the trilinear form remaining in \eqref{eq:discrete_Autested},
which poses the main difficulty of this proof.
Due to \Cref{lemm:c_swap_no_div}, the trilinear term satisfies the expression
\begin{equation*}
   \chatIO{\ukh,\ukh,A_h \ukh \chi_{I_m}} = \ImOprod{(\ukh \cdot \nabla)\ukh,A_h \ukh} 
   + \frac{1}{2} \ImOprod{\nabla \cdot \ukh,\ukh \cdot A_h \ukh},
\end{equation*}
which we can estimate with Hölder's inequality by
\begin{equation*}
   \chatIO{\ukh,\ukh,A_h \ukh \chi_{I_m}} \le C
   \int_{I_m} \|\ukh\|_{L^\infty(\Omega)} \|\nabla \ukh\|_{L^2(\Omega)} \|A_h \ukh\|_{L^2(\Omega)} \ dt.
\end{equation*}
Using \eqref{eq:discrete_stokes_gagliardo_nirenberg} and Young's inequality, we obtain the estimate
\begin{equation*}
  \chatIO{\blue{\ukh,\ukh,A_h \ukh \chi_{I_m}}} \le  
   \int_{I_m} C \|\ukh\|_{L^2(\Omega)}^2 \|\nabla \ukh\|_{L^2(\Omega)}^4 
   + \frac{\nu}{4} \|A_h \ukh\|_{L^2(\Omega)}^2 \ dt.
\end{equation*}
The latter term can be absorbed, hence we proceed by discussing the first one. It holds
\begin{equation*}
   \int_{I_m} C \|\ukh\|_{L^2(\Omega)}^2 \|\nabla \ukh\|_{L^2(\Omega)}^4 \ dt
   \le C\|\ukh\|_{L^\infty(I;L^2(\Omega))}^2\|\nabla \ukh\|_{L^2(I_m;L^2(\Omega))}^2
   \|\nabla \ukh\|_{L^\infty(I_m;L^2(\Omega))}^2.
\end{equation*}
Let us introduce $\gamma_m := C\|\ukh\|_{L^\infty(I;L^2(\Omega))}^2\|\nabla \ukh\|_{L^2(I_m;L^2(\Omega))}^2$.
\blue{All in all, we have derived from \eqref{eq:discrete_Autested} an estimate of the form
  \begin{align*}
    \frac{1}{2} \left(\|\nabla u_{kh,m}^-\|_{L^2(\Omega)}^2  + \|[\nabla \ukh]_{m-1}\|_{L^2(\Omega)}^2 
    - \|\nabla u_{kh,m-1}^-\|_{L^2(\Omega)}^2 \right)
    + \frac{1}{2}\nu \|A_h \ukh\|_{L^2(I_m;L^2(\Omega))}^2\\
    \le
    \|f\|_{L^2(I_m;L^2(\Omega))}^2 
    + \gamma_m \|\nabla \ukh\|_{L^\infty(I_m;L^2(\Omega))}^2,
  \end{align*}
  with the usual modifications on the first time interval.
  Multiplying this by 2 and summing up from $m=1,...,n$ yields for any $n \in \{1,...,M\}$ an 
  estimate of the form
  \begin{align*}
    \|\nabla u_{kh,m}^-\|_{L^2(\Omega)}^2  + \sum_{m=1}^n\left(\|[\nabla \ukh]_{m-1}\|_{L^2(\Omega)}^2 
    + \nu \|A_h \ukh\|_{L^2(I_m;L^2(\Omega))}^2\right)\\
    \le
    C \|\nabla u_0\|_{L^2(\Omega)}^2
    + \sum_{m=1}^n \left(\|f\|_{L^2(I_m;L^2(\Omega))}^2 
    + 2\gamma_m \|\nabla \ukh\|_{L^\infty(I_m;L^2(\Omega))}^2\right),
  \end{align*}
  where due to \eqref{eq:inititial_data_projection_stability} we have dropped the projection $\Ph$ of the 
  initial data on the right hand side.
  Since $f \in L^2(I;L^2(\Omega)^2)$ and $u_0 \in V$, \Cref{thm:nav_stokes_l2h1} shows
  that $\ukh \to u$ in $L^2(I;H^1(\Omega))$ as $(k,h)\to 0$.}
  \blue{Hence, due to} \Cref{lemm:vanishing_support_u}, \blue{and \Cref{thm:stability_discrete_navier_stokes},}
it holds $\gamma_m \to 0$ uniformly, as $(k,h) \to 0$.
This implies that by following the same steps as the proof of \Cref{thm:stability_discrete_navier_stokes},
we obtain the result as a consequence of the discrete Gronwall \Cref{thm:discrete_gronwall_quadlinear}.
\end{proof}

\section{Error estimates}\label{sec:ErrEst}
\subsection{Duality based best approximation type estimates}
\blue{In this concluding section of our work, we will show the main results, which are the 
  $L^\infty(I;L^2(\Omega))$ and $L^2(I;L^2(\Omega))$ error estimates. Their proofs are based 
  on duality arguments, and hence we begin this section by showing two stability results for 
  discrete dual equations. Let us first motivate the specific dual problem, that we will consider.}
Due to the nonlinear structure of the Navier-Stokes equations, there does not hold a Galerkin orthogonality
with respect to the bilinear form $B$, i.e.,
for solutions $(u,p)$ of \eqref{eq:nav_stokes_weak_with_pressure} and their discrete counterparts
$(\ukh,\pkh)$ of \eqref{eq:nav_stokes_discrete_with_pressure}, it holds for test functions 
$(\vkh,\qkh) \in \Xkh$:
\begin{equation*}
  B((u-\ukh,p-\pkh),(\vkh,\qkh)) = - \chatIO{u,u,\vkh} + \chatIO{\ukh,\ukh,\vkh},
\end{equation*}
where the right hand side is nonzero in general.
Since we want to use this orthogonality relation after testing the dual equation with $u-\ukh$,
we need to reformulate the trilinear terms, such that $u-\ukh$ occurs linearly.
To this end, we use the identity
\begin{equation}\label{eq:linearize_nav_stokes}
  \chatIO{u,u,\vkh} - \chatIO{\ukh,\ukh,\vkh}
  = \chatIO{\uukh,u-\ukh,\vkh} + \chatIO{u-\ukh,\uukh,\vkh},
\end{equation}
where we linearize around the average of continuous and discrete solutions to the Navier-Stokes 
equations
\begin{equation}\label{eq:def_linearizationpoint}
  \uukh := \frac{1}{2} (u + \ukh).
\end{equation}
With these considerations, we have the following lemma:
\begin{lemma}\label{lemm:Galerkin_orth}
  Let $(u,p)$ be a solution to the Navier-Stokes equations \eqref{eq:nav_stokes_weak_with_pressure},
  and $(\ukh,\pkh)$ their discrete approximation solving \eqref{eq:nav_stokes_discrete_with_pressure}.
  Let further $\uukh$ denote the average of $u$ and $\ukh$ as defined by \eqref{eq:def_linearizationpoint}.
  Then for any $(\vkh,\qkh)\in \Xkh$, it holds
\begin{equation*}
  B((u-\ukh,p-\pkh),(\vkh,\qkh)) + \chatIO{\uukh,u-\ukh,\vkh} + \chatIO{u-\ukh,\uukh,\vkh} = 0.
\end{equation*}
\end{lemma}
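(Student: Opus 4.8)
The plan is to derive the identity by subtracting the fully discrete equation \eqref{eq:nav_stokes_discrete_with_pressure} from the continuous weak formulation \eqref{eq:nav_stokes_weak_with_pressure}, both tested with the discrete pair $(\vkh,\qkh)\in\Xkh$, and then rewriting the resulting difference of trilinear terms by means of the algebraic identity \eqref{eq:linearize_nav_stokes}.

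First I would observe that $\Xkh\subset L^2(I;H^1_0(\Omega)^2)\times L^2(I;L^2_0(\Omega))$, so $(\vkh,\qkh)$ is an admissible test pair in \eqref{eq:nav_stokes_weak_with_pressure}. Next I would check that the discrete bilinear form $B$, evaluated on the continuous solution, reproduces the continuous weak form. By \Cref{thm:h2_regularity} we have $u\in H^1(I;L^2(\Omega)^2)\cap C(\bar I;H^1(\Omega)^2)$, hence $u$ has no jumps at the interior time nodes, $[u]_{m-1}=0$ for $m=2,\dots,M$, the sum $\sum_{m=1}^M\ImOprod{\partial_t u,\vkh}$ collapses to $\IOprod{\partial_t u,\vkh}$, and $u_0^+=u(0)=u_0$. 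Thus
\begin{equation*}
  B((u,p),(\vkh,\qkh)) = \IOprod{\partial_t u,\vkh} + \nu\IOprod{\nabla u,\nabla\vkh} - \IOprod{p,\nabla\cdot\vkh} + \IOprod{\qkh,\nabla\cdot u} + \Oprod{u_0,v_{kh,0}^+},
\end{equation*}
and the term $\IOprod{\qkh,\nabla\cdot u}$ vanishes since $u$ is pointwise divergence free. Testing \eqref{eq:nav_stokes_weak_with_pressure} with $(\vkh,\qkh)$ and using that, by \Cref{lemm:c_swap} and $u(t)\in V$ for a.e.\ $t$, one has $\cIO{u,u,\vkh}=\chatIO{u,u,\vkh}$, I arrive at
\begin{equation*}
  B((u,p),(\vkh,\qkh)) + \chatIO{u,u,\vkh} = \IOprod{f,\vkh} + \Oprod{u_0,v_{kh,0}^+}.
\end{equation*}
Subtracting \eqref{eq:nav_stokes_discrete_with_pressure} from this, the right-hand sides cancel, leaving
\begin{equation*}
  B((u-\ukh,p-\pkh),(\vkh,\qkh)) + \chatIO{u,u,\vkh} - \chatIO{\ukh,\ukh,\vkh} = 0.
\end{equation*}

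It then remains only to verify \eqref{eq:linearize_nav_stokes}, namely $\chatIO{u,u,\vkh}-\chatIO{\ukh,\ukh,\vkh}=\chatIO{\uukh,u-\ukh,\vkh}+\chatIO{u-\ukh,\uukh,\vkh}$ with $\uukh=\tfrac12(u+\ukh)$; this is a direct consequence of the linearity of $\hat c$ in its first two arguments, obtained by expanding $\uukh$ and $u-\ukh$ and collecting terms. Substituting this into the previous display yields the claimed orthogonality relation. I do not anticipate a genuine obstacle: the only two points needing care are the reduction of $B$ on the continuous solution (where the time regularity and temporal continuity of $u$ are used so that the jump and initial terms behave as expected) and the bookkeeping in the trilinear identity, and neither is delicate.
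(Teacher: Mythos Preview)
Your proof is correct and follows exactly the approach indicated by the paper, which simply states that the result is an immediate consequence of the definitions of solutions to \eqref{eq:nav_stokes_weak_with_pressure} and \eqref{eq:nav_stokes_discrete_with_pressure} together with the identity \eqref{eq:linearize_nav_stokes}. You have spelled out the details the paper omits---in particular the verification that $B$ applied to the smooth-in-time continuous solution $(u,p)$ reproduces the continuous weak form plus the boundary term $\Oprod{u_0,v_{kh,0}^+}$---and these are handled correctly.
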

\begin{proof}
  This result is an immediate consequence of the definitions of solutions to
  \eqref{eq:nav_stokes_weak_with_pressure} and \eqref{eq:nav_stokes_discrete_with_pressure},
  together with the identity \eqref{eq:linearize_nav_stokes}
\end{proof}
This motivates the choice of $\uukh$ as linearization point for setting up a dual equation:\\
Find $(\zkh,\rhokh) \in \Xkh$ such that for all $(\phi_{kh},\psi_{kh}) \in \Xkh$ it holds
\begin{equation}\label{eq:discrete_dual}
  B((\phi_{kh},\psi_{kh}),(\zkh,\rhokh)) + \chatIO{\uukh,\phi_{kh},\zkh} + \chatIO{\phi_{kh},\uukh,\zkh}
  = \IOpair{ \dualrhs,\phi_{kh}},
\end{equation}
where the right hand side $\dualrhs$ will be chosen appropriately, 
see the proofs of \Cref{thm:nav_stokes_bestapprox,thm:nav_stokes_l2l2}.
Note that the right hand side of \eqref{eq:discrete_dual} implicitly prescribes
the final data $z_{kh,M}^+ = 0$.
This dual equation will help us in deriving the sought error estimates, which we will do in the following
section.
\begin{remark}
  To analyze this dual problem, it will be convenient,
  to have dual represenations of $\mathfrak B$ and $B$ at hand, which are obtained by
  partial integration on each $I_m$ and rearranging the terms. Note that in this representation,
  the time derivative is applied to the second argument. It holds
  \begin{equation}\label{eq:dual_frakB_representation}
    \hspace{14mm} \mathfrak B(u,v) = - \sum_{m=1}^M \blue{\ImOprod{ u, \partial_t v}}
    + \nu\IOprod{\nabla  u,\nabla v}
    - \sum_{m=1}^{M-1} \Oprod{u_m^-,[v]_m}
    + \Oprod{u_M^-,v_M^-},
  \end{equation}
  \begin{equation}\label{eq:dual_B_representation}
    \begin{aligned}
      B((u,p),(v,q)) = & - \sum_{m=1}^M \blue{\ImOprod{ u, \partial_t v}}
     + \nu\IOprod{\nabla  u,\nabla v}
     - \sum_{m=1}^{M-1} \Oprod{u_m^-,[v]_m}
     + \Oprod{u_M^-,v_M^-}\\
     & + \IOprod{\nabla \cdot u, q}
     - \IOprod{\nabla \cdot v, p},
    \end{aligned}
  \end{equation}
  see also \cite{leykekhman_discrete_2017,behringer_fully_2022}.
\end{remark}
\begin{remark}\label{rem:dual_pressure_existence}
  Similar to the discrete Navier-Stokes equations, we can consider an equivalent formulation 
  for the discrete dual equation
  in discretely divergence free spaces: Find $\zkh \in \Vkh$ satisfying
\begin{equation}\label{eq:discrete_dual_pressurefree}
  \mathfrak B(\phi_{kh},\zkh) + \chatIO{\uukh,\phi_{kh},\zkh} + \chatIO{\phi_{kh},\uukh,\zkh}
  = \IOpair{\dualrhs,\phi_{kh}}
  \qquad \text{for all } \phi_{kh} \in \Vkh.
\end{equation}
With the same argument as \cite[Proposition 4.3]{behringer_fully_2022}, there holds:
If $\zkh \in \Vkh$ solves \eqref{eq:discrete_dual_pressurefree}, then there exists $\rhokh \in \Mkh$
such that $(\zkh,\rhokh) \in \Xkh$ solves \eqref{eq:discrete_dual}. If  $(\zkh,\rhokh) \in \Xkh$ solves
\eqref{eq:discrete_dual}, then $\zkh \in \Vkh$ and it solves \eqref{eq:discrete_dual_pressurefree}.
\end{remark}
We first show unique solvability of the discrete dual problem, and the stability in $L^\infty(I;L^2(\Omega)) \cap L^2(I;H^1(\Omega))$. Since both $u$ and $\ukh$ occur in the formulation of the discrete problem,
we need both results of \Cref{lemm:vanishing_support_u} to hold true. 
\begin{theorem}\label{thm:discrete_dual_oseen}
  \blue{Let $f \in L^2(I;L^2(\Omega)^2)$, $u_0 \in V$ }
  and $u$,$u_{kh} \in L^\infty(I;L^2(\Omega)^2) \cap L^2(I;H^1(\Omega)^2)$
  be solutions to the weak and fully discretized Navier-Stokes
  equations \eqref{eq:nav_stokes_weak} and \eqref{eq:nav_stokes_discrete}
  for either $q=0$ or $q=1$.
  Then for $(k,h)$ small enough, problem \eqref{eq:discrete_dual_pressurefree} has a unique solution 
  $\zkh \in \Vkh$ for any $\dualrhs \in L^1(I;L^2(\Omega)^2)$, and there holds the bound
  \begin{equation*}
    \|\zkh\|_{L^\infty(I;L^2(\Omega))} + \|\zkh\|_{L^2(I;\blue{\Hone})}
    \le \OseenConstant \|\dualrhs\|_{L^1(I;L^2(\Omega))},
  \end{equation*}
  where $K:[0, + \infty) \to [0, + \infty)$ is a strictly monotonically increasing, continuous
  nonlinear function, independent of $k,h$.
\end{theorem}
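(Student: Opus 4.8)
The plan is to reduce the whole statement to the a priori bound. Since \eqref{eq:discrete_dual_pressurefree} is linear in $\zkh$ and posed on the finite-dimensional space $\Vkh$, once the displayed estimate is established it applies with $\dualrhs=0$ and forces $\zkh=0$, so the underlying linear map is injective, hence bijective, which gives existence and uniqueness of $\zkh$ for every $\dualrhs\in L^1(I;L^2(\Omega)^2)$; the pressure $\rhokh$ is then recovered via \Cref{rem:dual_pressure_existence}. Thus it suffices to prove the stability bound for $(k,h)$ small enough.

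To this end I would test \eqref{eq:discrete_dual_pressurefree} with the function that equals $\zkh$ on $I_n\cup\dots\cup I_M$ and vanishes on $I_1\cup\dots\cup I_{n-1}$. Using the dual representation \eqref{eq:dual_frakB_representation} of $\mathfrak B$ together with the second identity of \Cref{lemm:jump_reorder}, the time-derivative and jump contributions reorganise, backwards in time and with the implicitly prescribed final value $z_{kh,M}^+=0$, into $\tfrac12\|z_{kh,n-1}^+\|_{L^2(\Omega)}^2+\tfrac12\|z_{kh,M}^-\|_{L^2(\Omega)}^2+\tfrac12\sum_{m=n+1}^M\|[\zkh]_{m-1}\|_{L^2(\Omega)}^2$ plus the viscous term $\nu\sum_{m=n}^M\|\nabla\zkh\|_{L^2(I_m;L^2(\Omega))}^2$. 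Among the two trilinear contributions, $\chatIO{\uukh,\zkh,\zkh}$ vanishes identically because $\hat c(a,b,b)=0$ for all $a,b\in H^1_0(\Omega)^2$ directly from the definition \eqref{eq:anti_symmetrized_c}, while the remaining term is estimated by writing $\hat c(\zkh,\uukh,\zkh)=\tfrac12 c(\zkh,\uukh,\zkh)-\tfrac12 c(\zkh,\zkh,\uukh)$, bounding the two pieces with \Cref{lemm:c_swap}, the interpolation inequality \eqref{eq:l4_interpol_general}, the Poincaré inequality, and Young's inequality, to the effect that
\begin{equation*}
  \int_{I_m}\left|\chatO{\zkh,\uukh,\zkh}\right|dt \le \frac{\nu}{4}\|\nabla\zkh\|_{L^2(I_m;L^2(\Omega))}^2 + \gamma_m\,\|\zkh\|_{L^\infty(I_m;L^2(\Omega))}^2,
\end{equation*}
with $\gamma_m:=C\left(1+\|\uukh\|_{L^\infty(I;L^2(\Omega))}^2\right)\int_{I_m}\|\nabla\uukh\|_{L^2(\Omega)}^2\,dt$. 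The data term is controlled by $\int_{I_m}\IOpair{\dualrhs,\zkh}\le\|\dualrhs\|_{L^1(I_m;L^2(\Omega))}\|\zkh\|_{L^\infty(I_m;L^2(\Omega))}$, which is precisely the linear, non-squared contribution that the tailored \Cref{thm:discrete_gronwall_quadlinear} is built to absorb.

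Combining these estimates, absorbing the viscous terms, bounding $\|\zkh\|_{L^\infty(I_m;L^2(\Omega))}$ by one-sided limits at the time nodes (immediate for $q=0$; for $q=1$ via the triangle inequality and the jump terms, exactly as in the proof of \Cref{thm:stability_discrete_navier_stokes}), and reversing the time index, I would apply \Cref{thm:discrete_gronwall_quadlinear} with $\{\gamma_m\}$ as above, $\{d_m\}=\{\|\dualrhs\|_{L^1(I_m;L^2(\Omega))}\}$ and $c_m=B=0$. This yields
\begin{equation*}
  \|\zkh\|_{L^\infty(I;L^2(\Omega))}^2 + \|\nabla\zkh\|_{L^2(I;L^2(\Omega))}^2 \le C\exp\left(C\sum_{m=1}^M\gamma_m\right)\|\dualrhs\|_{L^1(I;L^2(\Omega))}^2,
\end{equation*}
and since $\sum_{m=1}^M\gamma_m\le C\left(1+\|\uukh\|_{L^\infty(I;L^2(\Omega))}^2\right)\|\nabla\uukh\|_{L^2(I;L^2(\Omega))}^2$, the right-hand factor defines the continuous, strictly increasing function $K$ of $\|\uukh\|_{L^2(I;H^1(\Omega))\cap L^\infty(I;L^2(\Omega))}$; a final application of Poincaré's inequality turns $\|\nabla\zkh\|_{L^2(I;L^2(\Omega))}$ into $\|\zkh\|_{L^2(I;H^1(\Omega))}$, giving the claim.

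The hypothesis ``$(k,h)$ small enough'' is needed precisely so that $\gamma_m<1$ uniformly in $m$, as required by \Cref{thm:discrete_gronwall_quadlinear}, and I expect the verification of this to be the main obstacle. The prefactor $1+\|\uukh\|_{L^\infty(I;L^2(\Omega))}^2$ is uniformly bounded by \Cref{prop:navier_stokes_solvability} and \Cref{thm:stability_discrete_navier_stokes}; the critical quantity is $\int_{I_m}\|\nabla\uukh\|_{L^2(\Omega)}^2\,dt\le\tfrac12\int_{I_m}\|\nabla u\|_{L^2(\Omega)}^2\,dt+\tfrac12\int_{I_m}\|\nabla\ukh\|_{L^2(\Omega)}^2\,dt$, which tends to $0$ uniformly in $m$ by \Cref{lemm:vanishing_support_u}. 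The second half of that lemma (the one for $\ukh$) applies because $f\in L^2(I;L^2(\Omega)^2)$ and $u_0\in V$ yield, via \Cref{thm:nav_stokes_l2h1} together with the vanishing of the best-approximation, time-projection and Stokes--Ritz-projection errors (using the $H^2$ regularity of $u$ from \Cref{thm:h2_regularity}), that $\ukh\to u$ in $L^2(I;H^1(\Omega))$. The remaining care points --- the bookkeeping of the first and last subintervals in the telescoping of the jump terms, and the absorption of the $\tfrac{\nu}{4}$-terms --- are handled exactly as in the primal stability proof of \Cref{thm:stability_discrete_navier_stokes}.
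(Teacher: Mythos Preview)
Your proposal is correct and follows essentially the same approach as the paper's own proof: reduce to the a priori bound, test the dual equation with $\zkh$ on each $I_m$, exploit $\hat c(\uukh,\zkh,\zkh)=0$, estimate the remaining trilinear term via the $L^4$ interpolation and Young's inequality to produce the coefficients $\gamma_m$, handle the $L^1$-in-time right-hand side with the tailored \Cref{thm:discrete_gronwall_quadlinear}, and verify the smallness condition $\gamma_m<1$ uniformly in $m$ via \Cref{lemm:vanishing_support_u} together with the $L^2(I;H^1)$ convergence of $\ukh$ from \Cref{thm:nav_stokes_l2h1}. The paper carries out the index-shift bookkeeping a bit more explicitly (after bounding $\|\zkh\|_{L^\infty(I_m;L^2(\Omega))}$ by endpoint values the coefficients become $2(\gamma_m+\gamma_{m-1})$ and the $d_m$ involve $\|\dualrhs\|_{L^1(I_m\cup I_{m-1};L^2(\Omega))}$), but this only affects constants and is exactly what you flag as ``remaining care points''.
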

\begin{proof}
  On the continuous level and for $\dualrhs \in L^2(I;H^{-1}(\Omega)^2)$,
  the proof of a corresponding estimate
  can be found in \cite[Proposition 2.7]{casas_well_posedness_2021}.
  We adapt it to the discrete setting and to $\dualrhs \in L^1(I;L^2(\Omega)^2)$,
  making use of the previously derived
  discrete Gronwall \Cref{thm:discrete_gronwall_quadlinear}.
  We only have to prove the norm bound, since
  problem \eqref{eq:discrete_dual} is a quadratic system of linear equations, thus is solvable, if 
  it is injective. The norm bound yields, that for right hand side $\dualrhs=0$,
  $\zkh=0$ is the only solution,
  thus the norm bound implies existence and uniqueness.
  For ease of notation, we use the convention $[\zkh]_M = -z_{kh,M}^-$.
  Testing \cref{eq:discrete_dual_pressurefree} with $\zkh \chi_{I_{m}}$, $m=1,...,M$,
  where by $\chi_{I_{m}}$ we denote the indicator function of the subinterval $I_{m}$, yields:
  \begin{equation*}
  \begin{aligned}
    - \blue{\ImOprod{\zkh \chi_{I_{m}},\partial_t \zkh}}
    & + \nu\IOprod{\nabla \zkh \chi_{I_{m}}, \nabla \zkh}
    - \Oprod{z_{kh,m}^-,[\zkh]_{m}} \\
                                        & + \chatIO{\uukh,\zkh \chi_{I_{m}},\zkh}
    + \chatIO{\zkh \chi_{I_{m}},\uukh,\zkh}
                                         \! = \! \IOprod{\dualrhs,\zkh \chi_{I_{m}}}.
  \end{aligned}
 \end{equation*}
 Applying \Cref{lemm:jump_reorder} and writing the inner product as norm yields
 \begin{equation}\label{eq:discrete_dual_tested}
  \begin{aligned}
    \frac{1}{2} \|z_{kh,m-1}^+\|_{L^2(\Omega)}^2 + \frac{1}{2} \|[\zkh]_{m}\|_{L^2(\Omega)}^2
    + \nu\|\nabla \zkh\|_{L^2(I_m;L^2(\Omega))}^2&\\
    + \chatIO{\uukh,\zkh \chi_{I_{m}},\zkh} + \chatIO{\zkh \chi_{I_{m}},\uukh,\zkh}
    \!& = \frac{1}{2} \|z_{kh,m}^+\|_{L^2(\Omega)}^2 + \! \blue{\ImOprod{\dualrhs,\zkh}}.
  \end{aligned}
 \end{equation}
 We proceed by estimating the trilinear forms. According to its definition \eqref{eq:anti_symmetrized_c},
 the first term vanishes, and for the second one, it holds
  \begin{align*}
    \chatIO{\zkh\chi_{I_m},\uukh,\zkh} & = 
    \frac{1}{2} \cIO{\zkh\chi_{I_m},\uukh,\zkh} 
    - \frac{1}{2} \cIO{\zkh\chi_{I_m},\zkh,\uukh}.
  \end{align*}
  After applying \blue{Hölder's} inequality in space, we obtain
  \begin{equation*}
    \chatIO{\zkh\chi_{I_m},\uukh,\zkh} \le
    \frac{1}{2} \int_{I_m} \left(\|\zkh\|_{L^4(\Omega)}^2 \|\nabla \uukh\|_{L^2(\Omega)} 
    + \|\zkh\|_{L^4(\Omega)} \|\uukh\|_{L^4(\Omega)} \|\nabla \zkh\|_{L^2(\Omega)} \right)\ dt.
  \end{equation*}
  After estimating the $L^4$ norms by \Cref{lemm:c_swap}
  and applying \blue{Hölder's} inequality in time, we arrive at
  \begin{equation*}
  \begin{aligned}
    \chatIO{\zkh\chi_{I_m},\uukh,\zkh}
    \le & C 
    \Big(\|\nabla \zkh\|_{\LtwomLtwo} \|\zkh\|_{L^\infty(I_m;L^2(\Omega))}
        \|\nabla \uukh\|_{\LtwomLtwo} \\
        & + \|\zkh\|_{L^\infty(I_m;L^2(\Omega))}^{\frac{1}{2}} 
        \|\nabla \zkh\|_{\LtwomLtwo}^{\frac{3}{2}}
        \|\uukh\|_{L^\infty(I_m;L^2(\Omega))}^{\frac{1}{2}}
      \|\nabla \uukh\|_{\LtwomLtwo}^{\frac{1}{2}}\Big).
  \end{aligned}
  \end{equation*}
  An application of Young's inequality yields
  \begin{equation*}\label{eq:estimate_trilinear_form}
    \chatIO{\zkh\chi_{I_m},\uukh,\zkh}
    \le  \frac{\nu}{2} \|\nabla \zkh\|_{\LtwomLtwo}^2 + 
    C\|\zkh\|_{L^\infty(I_m;L^2(\Omega))}^{2}\left(1+\|\uukh\|_{L^\infty(I_m;L^2(\Omega))}^{2}\right)
      \|\nabla \uukh\|_{\LtwomLtwo}^{2}.
  \end{equation*}
  In order to abbreviate the notation, we introduce 
  $
    \gamma_m := C \|\nabla \uukh\|_{\LtwomLtwo}^2 \left(1+ \|\uukh\|_{L^\infty(I_m;L^2(\Omega))}^2\right).
  $
  We insert the above estimate into \eqref{eq:discrete_dual_tested}, absorb terms, and multiply by 2,
   which yields
  \begin{equation*}
  \begin{aligned}
    \|z_{kh,m-1}^+\|_{L^2(\Omega)}^2 + \|[\zkh]_{m}\|_{L^2(\Omega)}^2
    + \nu\|\nabla \zkh\|_{L^2(I_m;L^2(\Omega))}^2 
    & \le \gamma_m \|\zkh\|_{L^\infty(I_m;L^2(\Omega))}^2 + 
    \|z_{kh,m}^+\|_{L^2(\Omega)}^2 + 2 \blue{\ImOprod{\dualrhs,\zkh}}.
  \end{aligned}
 \end{equation*}
 After an application of Hölder's inequality, we obtain
 \begin{equation}\label{eq:discrete_dual_timeinterval}
  \begin{aligned}
    \|z_{kh,m-1}^+\|_{L^2(\Omega)}^2 + \|[\zkh]_{m}\|_{L^2(\Omega)}^2
    + \nu\|\nabla \zkh\|_{L^2(I_m;L^2(\Omega))}^2
    & \le \gamma_m \|\zkh\|_{L^\infty(I_m;L^2(\Omega))}^2 + 
    \|z_{kh,m}^+\|_{L^2(\Omega)}^2\\
    & \quad +2\|\dualrhs\|_{L^1(I_m;L^2(\Omega))}\|\zkh\|_{L^\infty(I_m;L^2(\Omega))}.
  \end{aligned}
 \end{equation}
 Since we have assumed $q=0$ or $q=1$, we can estimate the $\|\zkh\|_{L^\infty(I_m;L^2(\Omega))}$ terms
 by evaluations at the right and left endpoints:
    $\|\zkh\|_{L^\infty(I_m;L^2(\Omega))} 
    = \max \lbrace \|z_{kh,m-1}^+\|_{L^2(\Omega)},\|z_{kh,m}^-\|_{L^2(\Omega}\rbrace$.
  With triangle inequality there hold the following estimates:
  \begin{align*}
    \|\zkh\|_{L^\infty(I_m;L^2(\Omega))} 
    &\le \max \lbrace \|z_{kh,m-1}^+\|_{L^2(\Omega)},
    \|z_{kh,m}^+\|_{L^2(\Omega)}+\|[\zkh]_m\|_{L^2(\Omega)}\rbrace,\\
    \|\zkh\|_{L^\infty(I_m;L^2(\Omega))}^2
    &\le \max \lbrace \|z_{kh,m-1}^+\|_{L^2(\Omega)}^2,
    2\|z_{kh,m}^+\|_{L^2(\Omega)}^2+2\|[\zkh]_m\|_{L^2(\Omega)}^2\rbrace.
  \end{align*}
  We introduce 
  $x_n^2 := \|z_{kh,n-1}^+\|_{L^2(\Omega)}^2 
  + \sum_{m=n}^M \|[\zkh]_{m}\|_{L^2(\Omega)} ^2 +\nu\|\nabla \zkh\|_{L^2(I_m;L^2(\Omega))}^2$.
  Note that by the terminal condition for $z_{kh}$, it holds $x_{M+1}=0$.
  Then after summing up \eqref{eq:discrete_dual_timeinterval} from $m=n$ to $m=M$, we have:
  \begin{equation*}
    x_n^2 \le \sum_{m=n}^M 
    2 \gamma_m (x_m^2 + x_{m+1}^2)
    + 2\|\dualrhs\|_{L^1(I_{m};L^2(\Omega))} (x_m + x_{m+1}).
  \end{equation*}
  Shifting indices, we arrive at
  \begin{equation*}
    x_n^2 \le \sum_{m=n}^M 
    2 (\gamma_m + \gamma_{m-1}) x_m^2
    + 2\|\dualrhs\|_{L^1(I_{m} \cup I_{m-1};L^2(\Omega))} x_m,
  \end{equation*}
  where for $n=1$ we use the convention $I_0 = \emptyset$.
  Hence we are in the setting of \Cref{thm:discrete_gronwall_quadlinear},
  where formally we have to introduce an index transformation $\tilde n = M - n$.
  In order to apply the lemma, we need to verify $2(\gamma_m + \gamma_{m-1}) < 1$ for all $m$. 
  \blue{Due to $f\in L^2(I;L^2(\Omega)^2)$ and $u_0 \in V$, \Cref{thm:nav_stokes_l2h1} shows
  $\|u-\ukh\|_{\LtwoHone} \to 0$ as $(k,h) \to 0$ and we can apply \Cref{lemm:vanishing_support_u} and}
  obtain with triangle inequality, that 
  $\sup_{1\le m\le M} \|\nabla \uukh\|^2_{L^2(I_m;L^2(\Omega))} \to 0$ as $(k,h) \to 0$.
  Together with the bounds from \Cref{prop:navier_stokes_solvability} and 
  \Cref{thm:stability_discrete_navier_stokes} for $\|u\|_{L^\infty(I;L^2(\Omega))}$ and 
  $\|\ukh\|_{L^\infty(I;L^2(\Omega))}$, we obtain that 
  $\gamma_m \to 0$ uniformly in $n$ for $(k,h) \to 0$.
  Thus we can choose the discretization fine enough, such that 
  $\gamma_m < 1/8$, and thus we obtain from \Cref{thm:discrete_gronwall_quadlinear}
  \begin{equation}\label{eq:bound_discrete_dual_dG1}
  \begin{aligned}
    \|z_{kh,n-1}^+\|_{L^2(\Omega)}^2 + \sum_{m=n}^M \|[\zkh]_{m}\|_{L^2(\Omega)}^2 +
      \nu\|\nabla \zkh\|_{L^2(I_m;L^2(\Omega))}^2\\
    \le C_1\exp\left(C_2 \|\nabla \uukh\|^2_{\LtwoLtwo} 
    \left(1 + \|\uukh\|^2_{L^\infty(I,L^2(\Omega))}\right)\right) \|\dualrhs\|_{L^1(I;L^2(\Omega))}^2.
  \end{aligned}
  \end{equation}
\end{proof}
\blue{The next theorem, similar to \Cref{thm:nav_stokes_discrete_h2},
  states a stability result for the discrete dual solution 
in stronger norms, whenever the right hand side posesses more regularity.}
%
\begin{theorem}\label{lemm:discrete_dual_h2}
  Let the assumptions of \Cref{thm:discrete_dual_oseen} hold true, i.e.,
  \blue{let $f \in L^2(I;L^2(\Omega)^2)$, $u_0 \in V$ 
  and $u$,$u_{kh}$
  be solutions to the weak and fully discretized Navier-Stokes
  equations \eqref{eq:nav_stokes_weak} and \eqref{eq:nav_stokes_discrete}
  for either $q=0$ or $q=1$.}
  Then for $(k,h)$ small enough, for any $\dualrhs \in L^2(I;L^2(\Omega)^2)$,
  the unique solution $\zkh \in \Vkh$ to \eqref{eq:discrete_dual_pressurefree} satisfies the bound
%
   \begin{equation*}
      \|\zkh\|_{L^\infty(I;H^1(\Omega))} + \|A_h \zkh\|_{L^2(I;L^2(\Omega))} \le 
      \blue{C_{u,\ukh}} \|\dualrhs\|_{\LtwoLtwo},
   \end{equation*}
 \blue{ with a constant $C_{u,\ukh}$ depending on $u,\ukh$ in the form
   \begin{equation*}
     C_{u,\ukh} = C_1 \exp\left(C_2 \left(\|A u\|_{L^2(I;L^2(\Omega))}^2\|u\|_{L^\infty(I;L^2(\Omega))}^2
    +\|A_h \ukh\|_{L^2(I;L^2(\Omega))}^2\|\ukh\|_{L^\infty(I;L^2(\Omega))}^2\right)\right),
   \end{equation*}
 where the norms of $u,\ukh$ are bounded by 
 \Cref{prop:navier_stokes_solvability} and 
 \Cref{thm:h2_regularity,thm:nav_stokes_discrete_h2,thm:stability_discrete_navier_stokes},
 due to the assumptions on $f$ and $u_0$.}
\end{theorem}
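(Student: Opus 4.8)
The plan is to mirror the proof of \Cref{thm:nav_stokes_discrete_h2}, now for the dual equation \eqref{eq:discrete_dual_pressurefree}. Since \Cref{thm:discrete_dual_oseen} already yields existence, uniqueness and the $L^\infty(I;L^2(\Omega))\cap L^2(I;H^1(\Omega))$ bound (note $\dualrhs\in\LtwoLtwo\hookrightarrow L^1(I;L^2(\Omega)^2)$, so in particular $\|\zkh\|_{L^\infty(I;L^2(\Omega))}\le C\|\dualrhs\|_{\LtwoLtwo}$), only the strengthened bound has to be shown. For $m=1,\dots,M$ I would test \eqref{eq:discrete_dual_pressurefree} with $\phi_{kh}=A_h\zkh\chi_{I_m}\in\Vkh$ (admissible since $A_h\colon V_h\to V_h$). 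Using the dual representation \eqref{eq:dual_frakB_representation} of $\mathfrak B$ together with the elementary identities $\Oprod{A_h w_h,\partial_t w_h}=\Oprod{\nabla w_h,\nabla\partial_t w_h}$, $\Oprod{A_h w_h,v_h}=\Oprod{\nabla w_h,\nabla v_h}$ and $\Oprod{\nabla A_h w_h,\nabla w_h}=\|A_h w_h\|_{L^2(\Omega)}^2$ for $w_h,v_h\in V_h$, and then applying the second identity of \Cref{lemm:jump_reorder} to $\nabla\zkh$, the bilinear part collapses to $\tfrac12\|\nabla z_{kh,m-1}^+\|_{L^2(\Omega)}^2+\tfrac12\|[\nabla\zkh]_m\|_{L^2(\Omega)}^2-\tfrac12\|\nabla z_{kh,m}^+\|_{L^2(\Omega)}^2+\nu\|A_h\zkh\|_{L^2(I_m;L^2(\Omega))}^2$; on the last interval I would use the convention $z_{kh,M}^+=0$, $[\zkh]_M=-z_{kh,M}^-$ as in \Cref{thm:discrete_dual_oseen}, so that this formula holds for all $m$. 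The right-hand side $\ImOprod{\dualrhs,A_h\zkh}$ is handled by Cauchy--Schwarz and Young, absorbing $\tfrac{\nu}{4}\|A_h\zkh\|_{L^2(I_m;L^2(\Omega))}^2$.

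The crux is the pair of trilinear terms $\chatIO{\uukh,A_h\zkh\chi_{I_m},\zkh}$ and $\chatIO{A_h\zkh\chi_{I_m},\uukh,\zkh}$, where in the first form $A_h\zkh$ sits in a slot carrying a gradient. The plan is to use \Cref{lemm:c_swap_no_div} together with the antisymmetry built into \eqref{eq:anti_symmetrized_c} to rewrite both forms so that $A_h\zkh$ only ever appears undifferentiated, at the cost of a divergence correction $\Oprod{\nabla\cdot\ukh,\cdot}$ (recall $\nabla\cdot u=0$ but $\ukh\notin V$, so $\nabla\cdot\uukh=\tfrac12\nabla\cdot\ukh$). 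Writing $\uukh=\tfrac12(u+\ukh)$, the $u$-part of each resulting term is estimated with the continuous interpolation inequalities \eqref{eq:gagliardo_nirenberg}, \eqref{eq:w14_interpolation}, and the $\ukh$-part with the discrete analogues \eqref{eq:discrete_stokes_gagliardo_nirenberg}, \eqref{eq:discrete_w14_interpolation}, together with \eqref{eq:discrete_h1_interpolation} for $\|\nabla\zkh\|_{L^2(\Omega)}$. A careful choice of Young splittings then absorbs every $\|A_h\zkh\|^2$ contribution into the dissipation term and leaves, on each $I_m$, (i) coefficients $\gamma_m$ — controlled by quantities of the type $\|u\|_{L^\infty(I;L^2(\Omega))}^2\|Au\|_{L^2(I_m;L^2(\Omega))}^2+\|\ukh\|_{L^\infty(I;L^2(\Omega))}^2\|A_h\ukh\|_{L^2(I_m;L^2(\Omega))}^2$ and similar $|I_m|$-weighted terms — multiplying $\|\zkh\|_{L^\infty(I_m;H^1(\Omega))}^2$, and (ii) genuine source terms proportional to $\|\dualrhs\|_{\LtwoLtwo}^2$, obtained by bounding the remaining factor $\|\zkh\|_{L^\infty(I_m;L^2(\Omega))}$ through \Cref{thm:discrete_dual_oseen}.

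Summing the resulting inequality from $m=M$ down to $m=n$, telescoping, and using (for $q=0$ or $q=1$) that $\|\zkh\|_{L^\infty(I_m;H^1(\Omega))}\le\max\{\|\nabla z_{kh,m-1}^+\|_{L^2(\Omega)},\ \|\nabla z_{kh,m}^+\|_{L^2(\Omega)}+\|[\nabla\zkh]_m\|_{L^2(\Omega)}\}$, I would arrive at an inequality for $x_n^2:=\|\nabla z_{kh,n-1}^+\|_{L^2(\Omega)}^2+\sum_{m=n}^{M}\big(\|[\nabla\zkh]_m\|_{L^2(\Omega)}^2+\nu\|A_h\zkh\|_{L^2(I_m;L^2(\Omega))}^2\big)$ of exactly the form required by \Cref{thm:discrete_gronwall_quadlinear} (after the backward index shift $\tilde n=M-n$, with $x_{M+1}=0$). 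To apply it, the coefficients $\gamma_m$ must fall below $\tfrac18$ for $(k,h)$ small: this follows because $\|u\|_{L^\infty(I;L^2(\Omega))}$, $\|\ukh\|_{L^\infty(I;L^2(\Omega))}$, $\|\nabla u\|_{L^\infty(I;L^2(\Omega))}$, $\|\nabla\ukh\|_{L^\infty(I;L^2(\Omega))}$ are bounded uniformly in $(k,h)$ by \Cref{prop:navier_stokes_solvability,thm:h2_regularity,thm:stability_discrete_navier_stokes,thm:nav_stokes_discrete_h2}, while $\sup_m\|Au\|_{L^2(I_m;L^2(\Omega))}^2\to0$ by \Cref{lemm:vanishing_support} (since $\|Au\|_{L^2(\Omega)}^2\in L^1(I)$ by \Cref{thm:h2_regularity}) and $\sup_m\|\nabla\ukh\|_{L^2(I_m;L^2(\Omega))}^2\to0$ by \Cref{lemm:vanishing_support_u} (invoking $\ukh\to u$ in $L^2(I;H^1(\Omega))$ from \Cref{thm:nav_stokes_l2h1}), and the $|I_m|$-weighted coefficients vanish because $|I_m|\le k\to0$. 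The conclusion then reads off from \Cref{thm:discrete_gronwall_quadlinear}: the exponential prefactor $\exp\!\big(C\sum_m\sigma_m\gamma_m\big)$, after an elementary $ab\le\tfrac12 a^2b^2+\tfrac12$ step to pass from the natural bound on $\sum_m\gamma_m$ to the product form, is dominated by the claimed $C_{u,\ukh}$, and all bracketed (source and linear) terms carry the factor $\|\dualrhs\|_{\LtwoLtwo}^2$, as required.

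The step I expect to be the main obstacle is the trilinear estimate: one must simultaneously keep $A_h\zkh$ out of differentiated slots, which forces the use of \Cref{lemm:c_swap_no_div} and hence introduces the divergence-of-$\ukh$ correction term; split off the discrete and continuous parts of $\uukh$ and use the right (discrete versus continuous) interpolation inequality for each; and arrange the Young splittings so that every $\|A_h\zkh\|^2$ is absorbable while the leftover weights either converge to $0$ uniformly in $m$ (to meet the hypothesis of \Cref{thm:discrete_gronwall_quadlinear}) or collapse to an $\|\dualrhs\|_{\LtwoLtwo}^2$-proportional source term through \Cref{thm:discrete_dual_oseen}. The remaining technicalities — the node and jump bookkeeping at $t_M$ and extracting the precise form of the constant from the Gronwall lemma — are routine, following \Cref{thm:discrete_dual_oseen,thm:nav_stokes_discrete_h2}.
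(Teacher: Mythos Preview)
Your overall plan matches the paper's: test \eqref{eq:discrete_dual_pressurefree} with $A_h\zkh\chi_{I_m}$, use \Cref{lemm:c_swap_no_div} to push $A_h\zkh$ out of differentiated slots in $\hat c$, estimate via the continuous/discrete interpolation inequalities \eqref{eq:gagliardo_nirenberg}, \eqref{eq:discrete_stokes_gagliardo_nirenberg}, \eqref{eq:discrete_h1_interpolation}, apply Young, sum, and invoke \Cref{thm:discrete_dual_oseen}. The paper does exactly this, and its trilinear estimate collapses to
\[
C\int_{I_m}\|\zkh\|_{L^2(\Omega)}^{1/2}\|A_h\zkh\|_{L^2(\Omega)}^{3/2}
\bigl(\|u\|_{L^2(\Omega)}^{1/2}\|Au\|_{L^2(\Omega)}^{1/2}
+\|\ukh\|_{L^2(\Omega)}^{1/2}\|A_h\ukh\|_{L^2(\Omega)}^{1/2}\bigr)\,dt.
\]

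The one point where you diverge is in handling the leftover after Young's inequality. With exponents $(4/3,4)$ the remainder carries only $\|\zkh\|_{L^2(\Omega)}^2$, \emph{not} $\|\zkh\|_{H^1(\Omega)}^2$. The paper therefore bounds this factor globally by $\|\zkh\|_{L^\infty(I;L^2(\Omega))}^2$ via \Cref{thm:discrete_dual_oseen} and simply sums over $m$; no second Gronwall loop is run, and the smallness of $(k,h)$ enters only through the call to \Cref{thm:nav_stokes_discrete_h2} (to make $\|A_h\ukh\|_{L^2(I;L^2(\Omega))}$ finite). Your plan instead feeds the leftover into a Gronwall argument with coefficients $\gamma_m$ multiplying $\|\zkh\|_{L^\infty(I_m;H^1(\Omega))}^2$. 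That is not wrong in spirit, but it creates the extra obligation $\sup_m\gamma_m\to 0$, and your verification has a gap: you claim $\gamma_m$ contains a piece $\|\ukh\|_{L^\infty(I;L^2(\Omega))}^2\|A_h\ukh\|_{L^2(I_m;L^2(\Omega))}^2$, yet for its smallness you cite only \Cref{lemm:vanishing_support_u}, which concerns $\nabla\ukh$, not $A_h\ukh$. No result in the paper gives $A_h\ukh\to Au$ in $L^2(I\times\Omega)$, and uniform boundedness of $\|A_h\ukh\|_{L^2(I;L^2(\Omega))}$ alone does not force $\sup_m\|A_h\ukh\|_{L^2(I_m;L^2(\Omega))}\to 0$. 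The fix is simply to follow the paper and observe that the leftover sits in $L^2(\Omega)$, making the Gronwall step --- and hence this smallness verification --- unnecessary.
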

\begin{proof}
  The proof follows the same steps as the proof the stability of $\ukh$ in stronger norms, presented in
  \Cref{thm:nav_stokes_discrete_h2}.
  We begin by testing the dual equation with $A_h \zkh \chi_{I_{m}}$ for arbitrary $m=1,...,M$,
  where $\chi_{I_{m}}$ denotes the characteristic function of the time interval $I_{m}$.
  The nonlinear terms that occur for the discrete dual equation are of the form
  \begin{equation*}
    \chatIO{\uukh,A_h \zkh \chi_{I_{m}},\zkh} + \chatIO{A_h \zkh \chi_{I_{m}},\uukh,\zkh}.
  \end{equation*}
  By \blue{Hölder's} inequality in space, \eqref{eq:discrete_h1_interpolation} and
  the discrete Gagliardo-Nirenberg inequality \eqref{eq:discrete_stokes_gagliardo_nirenberg}, these
  terms can be estimated by
  \begin{align*}
     C \int_{I_{m}} \|\zkh\|_{L^2(\Omega)}^{\frac{1}{2}}\|A_h \zkh\|_{L^2(\Omega)}^{\frac{3}{2}}
     \left(\|\nabla \uukh\|_{L^2(\Omega)} + \|\uukh\|_{L^\infty(\Omega)} \right)dt.
  \end{align*}
  Applying the continuous and discrete Gagliardo-Nirenberg inequalities
  \eqref{eq:gagliardo_nirenberg} and \eqref{eq:discrete_stokes_gagliardo_nirenberg}, 
  and \eqref{eq:discrete_h1_interpolation} to $\uukh$, it remains
  \begin{align*}
     C \int_{I_{m}} \|\zkh\|_{L^2(\Omega)}^{\frac{1}{2}}\|A_h \zkh\|_{L^2(\Omega)}^{\frac{3}{2}}
     \left(\|u\|_{L^2(\Omega)}^{\frac{1}{2}}\|A u\|_{L^2(\Omega)}^{\frac{1}{2}}
     + \|\ukh\|_{L^2(\Omega)}^{\frac{1}{2}}\|A_h\ukh\|_{L^2(\Omega)}^{\frac{1}{2}}\right) dt.
  \end{align*}
  An application of Young's inequality, absorbing terms and summing over the subintervals allows us to 
  conclude the proof.
  By \Cref{thm:discrete_dual_oseen} it holds $\zkh \in L^\infty(I;L^2(\Omega)^2)$ with a bound 
  independent on $k,h$ and linear in $\|\dualrhs\|_{L^1(I;L^2(\Omega))}$. 
  Further, the terms involving $u, \ukh$ are summable, since the $L^\infty(I;L^2(\Omega))$ 
  norms of $u, \ukh$ remain bounded via \Cref{prop:navier_stokes_solvability} and
  \Cref{thm:stability_discrete_navier_stokes}, and the $Au, A_h \ukh$ terms are summable by
  \Cref{thm:h2_regularity} and \Cref{thm:nav_stokes_discrete_h2}, 
  \blue{where the latter holds true for $(k,h)$ small enough, 
  due to $f \in L^2(I;L^2(\Omega)^2)$ and $u_0 \in V$.}
\end{proof}

%
%
We now turn towards showing the main result of our work, i.e., the error estimate for the Navier-Stokes 
equations in the $L^\infty(I;L^2(\Omega))$ norm.
%
\blue{As in the proof of \Cref{thm:nav_stokes_l2h1}, we} 
will split the error $u - \ukh$ into an error for a Stokes problem,
and a remainder term, which we will estimate using the discrete dual equation 
\eqref{eq:discrete_dual_pressurefree},
\blue{to which we can apply the results of \Cref{thm:discrete_dual_oseen}.}
%
%
%
Similar to the result for the Stokes equations of \Cref{prop:stokes_linfty},
the error estimate will consist of two terms with the first one being a best approximation
error, and the second one being the error of the stationary Stokes Ritz projection introduced in 
\eqref{eq:stokes_ritzprojection}.
This result estimates the $L^\infty(I;L^2(\Omega))$ norm in an isolated fashion, and thus does not suffer 
from an order reduction, which is observed in results that estimate the error norm combined with the
$L^2(I;H^1(\Omega))$ norm.
\begin{theorem}\label{thm:nav_stokes_bestapprox}
  Let $f \in L^2(I;L^2(\Omega)^2)$ and $u_0 \in V$.
  Let $(u,p)$ be the unique solution to the Navier-Stokes equations \eqref{eq:nav_stokes_weak_with_pressure},
  and $(\ukh,\pkh)$ the corresponding solution to the discretized equations 
  \eqref{eq:nav_stokes_discrete_with_pressure} for a discontinuous Galerkin method in time with
  order $q=0$ or $q=1$, \blue{for sufficiently small discretization parameters $(k,h)$.}
  Then there holds
  \begin{equation*}
    \|u - \ukh\|_{\LinfLtwo} \le C \left(\ln \frac{T}{k}\right)
    \left(
      \inf_{\blue{\chi_{kh}} \in \Vkh} \|u - \blue{\chi_{kh}}\|_{\LinfLtwo} 
    + \|u - R_h^S(u,p)\|_{\LinfLtwo}\right).
  \end{equation*}
\end{theorem}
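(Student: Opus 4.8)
The plan is to mimic the error-splitting used for \Cref{thm:nav_stokes_l2h1}, but to replace the energy argument for the auxiliary part of the error by a duality argument based on the discrete dual problem \eqref{eq:discrete_dual_pressurefree}. First I would introduce the instationary Stokes projection $(\ukht,\pkht)\in\Xkh$ of $(u,p)$, i.e.\ the solution of \eqref{eq:discrete_aux_stokes}, which is well defined since $f-(u\cdot\nabla)u\in L^2(I;L^2(\Omega)^2)$ by \Cref{thm:nonlinearity_higher_regularity}. Splitting $u-\ukh=\xi+\eta_{kh}$ with $\xi:=u-\ukht$ and $\eta_{kh}:=\ukht-\ukh\in\Vkh$, and $p-\pkh=\omega+\kappa_{kh}$ analogously, the part $\xi$ is handled directly by the Stokes estimate \Cref{prop:stokes_linfty} with $s=2$ (note $u_0\in V\hookrightarrow V_{1-1/s}$ for $s=2$ by \Cref{rem:initial_data_spaces}, and that $u$ is exactly the continuous Stokes solution for the data $f-(u\cdot\nabla)u$ with pressure $p$), yielding
\[
  \|\xi\|_{\LinfLtwo}\le C\Big(\ln\tfrac Tk\Big)\Big(\inf_{\chi_{kh}\in\Vkh}\|u-\chi_{kh}\|_{\LinfLtwo}+\|u-R_h^S(u,p)\|_{\LinfLtwo}\Big);
\]
this already carries the logarithmic factor appearing in the claim.

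For the remaining part $\eta_{kh}$ I would use duality. The decisive point is that, because of the particular choice of Stokes projection, the Galerkin orthogonality $B((\xi,\omega),(\phikh,\psikh))=0$ holds \emph{exactly} for all $(\phikh,\psikh)\in\Xkh$; subtracting it from \Cref{lemm:Galerkin_orth} gives $B((\eta_{kh},\kappa_{kh}),(\phikh,\psikh))+\chatIO{\uukh,u-\ukh,\phikh}+\chatIO{u-\ukh,\uukh,\phikh}=0$. Let $\zkh\in\Vkh$ solve \eqref{eq:discrete_dual_pressurefree} for a right-hand side $g$ to be fixed later (existence and uniqueness for small $(k,h)$ by \Cref{thm:discrete_dual_oseen}). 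Testing \eqref{eq:discrete_dual_pressurefree} with $\phikh=\eta_{kh}$, using $\mathfrak B(\eta_{kh},\zkh)=B((\eta_{kh},\kappa_{kh}),(\zkh,\rhokh))$ (the divergence cross-terms vanish since $\eta_{kh},\zkh\in\Vkh$), then the orthogonality above with $(\phikh,\psikh)=(\zkh,\rhokh)$, and finally $u-\ukh=\xi+\eta_{kh}$ together with the linearity of $\hat c$ in its first two arguments, all $\eta_{kh}$-contributions cancel and one is left with the clean identity
\[
  \IOpair{g,\eta_{kh}}=-\chatIO{\uukh,\xi,\zkh}-\chatIO{\xi,\uukh,\zkh}.
\]
Since for $q\in\{0,1\}$ the function $t\mapsto\|\eta_{kh}(t)\|_{L^2(\Omega)}$ attains its maximum at a time node, I would choose $g$ as an appropriately normalized function supported on a single subinterval $I_{m^*}$, so that $\IOpair{g,\eta_{kh}}$ reproduces $\|\eta_{kh,m^*}^-\|_{L^2(\Omega)}$ (respectively, for $q=1$, a nodal value controlling $\|\eta_{kh}\|_{L^\infty(I_{m^*};L^2(\Omega))}$ up to a constant), with $\|g\|_{L^1(I;L^2(\Omega))}=1$ (and $\|g\|_{L^2(I;L^2(\Omega))}\lesssim k_{\min}^{-1/2}$).

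It then remains to bound $|\chatIO{\uukh,\xi,\zkh}+\chatIO{\xi,\uukh,\zkh}|$. Using the antisymmetry of $\hat c$ and \Cref{lemm:c_swap_no_div}, I would rewrite this sum so that the spatial derivatives fall on $\zkh$ (or on $\uukh$), at the price of two residual terms $\Oprod{\nabla\cdot\uukh,\xi\cdot\zkh}$ and $\Oprod{\nabla\cdot\ukht,\uukh\cdot\zkh}$ which are nonzero only because $V_h\not\subset V$. The ``good'' contributions are bounded by $\int_I\|\uukh\|_{L^\infty(\Omega)}\|\xi\|_{L^2(\Omega)}\|\nabla\zkh\|_{L^2(\Omega)}\,dt$ and then, by Hölder in time, by $\|\uukh\|_{L^4(I;L^\infty(\Omega))}\|\xi\|_{\LinfLtwo}\|\nabla\zkh\|_{L^2(I;L^2(\Omega))}$, where $\|\uukh\|_{L^4(I;L^\infty(\Omega))}$ is controlled by the (discrete) Gagliardo–Nirenberg inequalities \eqref{eq:gagliardo_nirenberg}, \eqref{eq:discrete_stokes_gagliardo_nirenberg} together with \Cref{thm:h2_regularity} and the strong discrete stability \Cref{thm:nav_stokes_discrete_h2}, and $\|\nabla\zkh\|_{L^2(I;L^2(\Omega))}$ by \Cref{thm:discrete_dual_oseen}; the residual divergence terms are estimated with the $H^1$-bounds \eqref{eq:discrete_stokes_linfh1bound} for $\ukht$ and \Cref{thm:nav_stokes_discrete_h2} for $\ukh$, the interpolation inequalities \eqref{eq:discrete_w14_interpolation}, \eqref{eq:discrete_h1_interpolation}, and — wherever boundedness of $\zkh$ in $L^\infty(\Omega)$ or of $A_h\zkh$ is needed — the stronger dual estimate \Cref{lemm:discrete_dual_h2}. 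Altogether this should yield $\|\eta_{kh}\|_{\LinfLtwo}\le C\,\|\xi\|_{\LinfLtwo}$, and combining with the estimate for $\xi$ gives the theorem.

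I expect the genuinely delicate step to be this last one: arranging the nonlinear estimates so that \emph{only} the $L^\infty(I;L^2(\Omega))$ best-approximation and Ritz-projection errors of $u$ survive on the right-hand side — and not a stronger norm of $\xi$ — while keeping the logarithmic factor under control and correctly absorbing the residual $\nabla\cdot$-contributions coming from $V_h$ not being pointwise divergence free. In particular, deciding for each trilinear term whether to invoke the $L^\infty(I;L^2(\Omega))\cap L^2(I;H^1(\Omega))$ stability of \Cref{thm:discrete_dual_oseen} (with $g\in L^1(I;L^2(\Omega))$) or the stronger $A_h$-stability of \Cref{lemm:discrete_dual_h2} (with $g\in L^2(I;L^2(\Omega))$), and choosing $g$ accordingly, is where the argument has to be carried out with care.
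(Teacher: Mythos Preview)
Your overall strategy---the error splitting $e=\xi+\eta_{kh}$, the Stokes estimate \Cref{prop:stokes_linfty} for $\xi$, and the duality argument for $\eta_{kh}$ leading to the identity
\[
  \IOpair{g,\eta_{kh}}=-\chatIO{\uukh,\xi,\zkh}-\chatIO{\xi,\uukh,\zkh}
\]
---is exactly what the paper does. Your choice of $g$ is also morally the same: the paper takes $g=\eta_{kh}(\tilde t)\theta$ with $\theta$ a regularized Dirac at an \emph{arbitrary} $\tilde t\in I$ (not just time nodes), satisfying $\|\theta\|_{L^1(I)}\le C$, so that $\IOpair{g,\eta_{kh}}=\|\eta_{kh}(\tilde t)\|_{L^2(\Omega)}^2$ directly.

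Where your plan goes wrong is in the treatment of $\chatIO{\xi,\uukh,\zkh}$. You propose to apply \Cref{lemm:c_swap_no_div} there as well, producing the residual term $\IOprod{\nabla\cdot\ukht,\uukh\cdot\zkh}$. This term contains \emph{no factor of $\xi$}, hence cannot be bounded by $\|\xi\|_{\LinfLtwo}$ times anything; and your proposed fallback, invoking \Cref{lemm:discrete_dual_h2}, would bring in $\|g\|_{L^2(I;L^2(\Omega))}\sim k^{-1/2}$, destroying the estimate. The point is that this swap is \emph{unnecessary}: in $\chatIO{\xi,\uukh,\zkh}=\tfrac12\cIO{\xi,\uukh,\zkh}-\tfrac12\cIO{\xi,\zkh,\uukh}$ the factor $\xi$ already sits in the first (gradient-free) slot, so both summands are directly bounded by $\|\xi\|_{L^2(\Omega)}$ together with $\|\nabla\uukh\|_{L^4(\Omega)}\|\zkh\|_{L^4(\Omega)}$ and $\|\nabla\zkh\|_{L^2(\Omega)}\|\uukh\|_{L^\infty(\Omega)}$ respectively. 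Only $\chatIO{\uukh,\xi,\zkh}$ requires the swap, giving the single residual term $\tfrac12\IOprod{\nabla\cdot\uukh,\xi\cdot\zkh}$, which \emph{does} contain $\xi$.

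This also resolves your uncertainty about which dual stability to use: with the correct arrangement, the strong norms $\|\cdot\|_{L^4(I;L^\infty(\Omega))}$ and $\|\nabla\cdot\|_{L^2(I;L^4(\Omega))}$ fall entirely on $\uukh$ and are controlled by the primal $H^2$-type bounds (\Cref{thm:h2_regularity} and \Cref{thm:nav_stokes_discrete_h2}) via \eqref{eq:gagliardo_nirenberg}--\eqref{eq:discrete_w14_interpolation}. For $\zkh$ only $\|\zkh\|_{L^4(I;L^4(\Omega))}\le\|\zkh\|_{\LinfLtwo}^{1/2}\|\nabla\zkh\|_{L^2(I\times\Omega)}^{1/2}$ and $\|\nabla\zkh\|_{L^2(I\times\Omega)}$ are needed, so \Cref{thm:discrete_dual_oseen} with $g\in L^1(I;L^2(\Omega)^2)$ suffices; \Cref{lemm:discrete_dual_h2} is \emph{not} used here (it is reserved for the $L^2(I;L^2(\Omega))$ estimate, \Cref{thm:nav_stokes_l2l2}).
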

\begin{proof}
  \blue{We use the same notation as in the proof of \Cref{thm:nav_stokes_l2h1} and denote 
    the velocity error by $e:= u - \ukh$ and the pressure error by $r:=p - \pkh$. We consider the 
    Stokes projection $(\ukht,\pkht)$ of $(u,p)$, solving \eqref{eq:discrete_aux_stokes}, and the 
    induced splitting of the error into $e = \xi + \eta_{kh}$, $r = \omega + \kappa_{kh}$,
    where $\xi = u - \ukht$, $\eta_{kh} = \ukht - \ukh$, $\omega = p - \pkht$ and $\kappa_{kh} = \pkht - \pkh$.
}
  We immediately obtain the estimate
  \begin{equation*}
    \|\xi\|_{\LinfLtwo}
    \le C \left(\ln \frac{T}{k}\right)
    \left( \inf_{\blue{\chi_{kh}} \in \Vkh} \|u - \blue{\chi_{kh}}\|_{\LinfLtwo} 
    + \|u - R_h^S(u,p)\|_{\LinfLtwo}\right),
  \end{equation*}
  by \Cref{prop:stokes_linfty}, since by
  \Cref{thm:nonlinearity_higher_regularity} it holds $f - (u\cdot \nabla) u \in L^2(I;L^2(\Omega)^2)$.
  Thus to finalize the proof, we have to estimate the $L^2(\Omega)$ norm of 
  $\eta_{kh}$ pointwise in time.
  To this end, as in \cite[Theorem 6.2]{behringer_fully_2022},
  we fix $\tilde t \in I$ and construct $\theta \in C^\infty_0(I)$ in such a way, that 
  $\supp \theta \subset I_m$ where $m$ is chosen such that $\tilde t \in I_m$, and
  \begin{equation*}
    \IOprod{\eta_{kh}(\tilde t)\theta,\phi_{kh}} = \Oprod{\eta_{kh}(\tilde t), \phi_{kh}(\tilde t)}
  \ \text{for all } \phi_{kh} \in \Ukh,
  \quad \|\theta \|_{L^1(I)} \le C \text{ independent of } \tilde t \text{ and } k.
  \end{equation*}
  For the construction of such a function $\theta$ serving the purpose of a regularized Dirac measure,
  we refer to \cite[Appendix A.5]{schatz_interior_1995}.
  We then define the dual solution $\zkh \in \Vkh$ such that
  for all $\phi_{kh} \in \Vkh$, it satisfies
  \begin{equation*}
    \mathfrak B(\phi_{kh},\zkh) 
    + \chatIO{\uukh,\phi_{kh},\zkh} 
    + \chatIO{\phi_{kh},\uukh,\zkh}
    = \IOprod{\eta_{kh}(\tilde t) \theta,\phi_{kh}}.
  \end{equation*}
  \blue{By \Cref{thm:discrete_dual_oseen}}
  we have the existence, uniqueness and regularity of
  $\zkh$, \blue{ satisfying the bound
    \begin{equation}\label{eq:discrete_dual_bound}
    \|\zkh\|_{L^\infty(I;L^2(\Omega))} + \|\nabla \zkh\|_{\LtwoLtwo}
    \le \OseenConstant \|\eta_{kh}(\tilde t) \theta \|_{L^1(I;L^2(\Omega))},
\end{equation}}
for $(k,h)$ small enough.
  From Remark \ref{rem:dual_pressure_existence}, we obtain the existence of an 
  associated pressure $\rhokh \in \Mkh$, such that for all $(\phi_{kh},\psi_{kh}) \in \Xkh$, it holds
  \begin{equation*}
    B((\phi_{kh},\psi_{kh}),(\zkh,\rhokh)) 
    + \chatIO{\uukh,\phi_{kh},\zkh} 
    + \chatIO{\phi_{kh},\uukh,\zkh}
    = \IOprod{\eta_{kh}(\tilde t) \theta,\phi_{kh}}.
  \end{equation*}
  Choosing the specific test functions $(\phi_{kh},\psi_{kh}) = (\eta_{kh},\kappa_{kh}) \in \Xkh$, we have
  \begin{align*}
    \|\eta_{kh}(\tilde t)\|^2_{L^2(\Omega)} 
    & = \IOprod{\eta_{kh}(\tilde t)\theta,\eta_{kh}}\\
    & = B((\eta_{kh},\kappa_{kh}),(\zkh,\rhokh)) 
        + \chatIO{\uukh,\eta_{kh},\zkh} 
        + \chatIO{\eta_{kh},\uukh,\zkh}\\
    & = B((e-\xi,r - \omega),(\zkh,\rhokh)) 
        + \chatIO{\uukh,e-\xi,\zkh} 
        + \chatIO{e-\xi,\uukh,\zkh}\\
    & = B((e,r),(\zkh,\rhokh)) 
        + \chatIO{\uukh,e,\zkh} 
        + \chatIO{e,\uukh,\zkh}\\
    & \phantom{==} - B((\xi,\omega),(\zkh,\rhokh)) 
    - \chatIO{\uukh,\xi,\zkh} 
    - \chatIO{\xi,\uukh,\zkh}.
  \end{align*}
  For $\xi$ we can use the Galerkin orthogonality with respect to $B$, i.e., from \eqref{eq:discrete_aux_stokes},
  we obtain
  \begin{equation*}
    B((\xi,\omega),(\phikh,\psikh)) = 0 \qquad \text{for all } (\phikh, \psikh) \in \Xkh.
  \end{equation*}
  \blue{Furthermore,} due to \Cref{lemm:Galerkin_orth} it holds
  \begin{equation*}
    B((e,r),(\zkh,\rhokh)) + \chatIO{\uukh,e,\zkh} + \chatIO{e,\uukh,\zkh} = 0.
  \end{equation*}
  Thus we see directly 
  \begin{equation*}
    \|\eta_{kh}(\tilde t)\|^2_{L^2(\Omega)} 
     = - \chatIO{\uukh,\xi,\zkh} - \chatIO{\xi,\uukh,\zkh}.
  \end{equation*}
  We want to make use of the $L^\infty(I;L^2(\Omega))$ estimate of $\xi$ and thus have to move it
  to an argument of $\hat c$ that has no spatial gradient applied to it.
  Since $\hat c$ was obtained by anti-symmetrizing $c$, there are gradients in the second and third
  argument of $\hat c$, which is why we revert to the original trilinear form $c$.
  The first argument has no gradient, and thus 
  \begin{equation}\label{eq:error_split_remainder}
    \|\eta_{kh}(\tilde t)\|^2_{L^2(\Omega)} 
    = - \frac{1}{2} \cIO{\uukh,\xi,\zkh} + \frac{1}{2} \cIO{\uukh,\zkh,\xi} - \chatIO{\xi,\uukh,\zkh}.
  \end{equation}
  Lemma \ref{lemm:c_swap_no_div} allows us to switch $\xi$ to the third argument, and we thus have
  \begin{equation*}
    \|\eta_{kh}(\tilde t)\|^2_{L^2(\Omega)} 
    = \frac{1}{2} \IOprod{\nabla \cdot \uukh, \zkh \cdot \xi} 
    + \cIO{\uukh,\zkh,\xi} - \chatIO{\xi,\uukh,\zkh}.
  \end{equation*}
  After application of Hölder's inequality, we obtain
  \begin{equation}\label{eq:pointwise_split_error}
  \begin{aligned}
    \|\eta_{kh}(\tilde t)\|^2_{L^2(\Omega)} 
    \le & C\|\xi\|_{L^\infty(I;L^2(\Omega))}  \|1\|_{L^4(I;L^\infty(\Omega))}\\
        & \times \left(\|\zkh\|_{L^4(I;L^4(\Omega))}\|\nabla \uukh\|_{L^2(I;L^4(\Omega))}
                   +  \|\nabla \zkh\|_{\LtwoLtwo}\|\uukh\|_{L^4(I;L^\infty(\Omega))} \right).
  \end{aligned}
  \end{equation}
  By \Cref{thm:h2_regularity} and \Cref{thm:nav_stokes_discrete_h2}, 
  \blue{the solutions $u$,$\ukh$ satisfy the bound} 
    \begin{equation*}
      \|\uukh\|_{L^\infty(I;H^1(\Omega))} +
      \|u\|_{L^2(I;H^2(\Omega))} +
      \|A_h \ukh\|_{L^2(I;L^2(\Omega))} \le C,
    \end{equation*}
    with a constant depending on the data.
    \blue{Hölder's} inequality and the continuous and discrete Gagliardo-Nirenberg inequalities 
    \eqref{eq:gagliardo_nirenberg} and \eqref{eq:discrete_stokes_gagliardo_nirenberg} yield moreover 
    \begin{equation*}
      \|\uukh\|_{L^4(I;L^\infty(\Omega))} 
      \le C \left(\|u\|_{L^\infty(I;L^2(\Omega))}^{\frac{1}{2}}\|A u\|_{L^2(I;L^2(\Omega))}^{\frac{1}{2}}
      +\|\ukh\|_{L^\infty(I;L^2(\Omega))}^{\frac{1}{2}} \|A_h \ukh\|_{L^2(I;L^2(\Omega))}^{\frac{1}{2}}\right).
    \end{equation*}
    From \eqref{eq:w14_interpolation} and \eqref{eq:discrete_w14_interpolation}, we further obtain
    \begin{equation*}
      \|\nabla \uukh\|_{L^2(I;L^4(\Omega))} 
      \le C \left(\|\nabla u\|_{L^\infty(I;L^2(\Omega))}^{\frac{1}{2}}\|A u\|_{L^2(I;L^2(\Omega))}^{\frac{1}{2}}
      + \|\nabla \ukh\|_{L^\infty(I;L^2(\Omega))}^{\frac{1}{2}} \|A_h \ukh\|_{L^2(I;L^2(\Omega))}^{\frac{1}{2}}
    \right).
    \end{equation*}
    With the above estimates, we thus obtain from \eqref{eq:pointwise_split_error}
    \begin{equation*}
    \|\eta_{kh}(\tilde t)\|^2_{L^2(\Omega)} 
    \le C   T^{\frac{1}{4}}\|\xi\|_{L^\infty(I;L^2(\Omega))}
    \left(\|\zkh\|_{L^4(I;L^4(\Omega))} +  \|\nabla \zkh\|_{\LtwoLtwo}\right).
    \end{equation*}
  With \Cref{lemm:c_swap}, we further obtain the estimate
  \begin{equation*}
    \|\zkh\|_{L^4(I;L^4(\Omega))} 
    \le \|\zkh\|_{L^\infty(I;L^2(\Omega))}^{\frac{1}{2}}\blue{\|\nabla\zkh\|_{\LtwoLtwo}^{\frac{1}{2}}}.
  \end{equation*}
  \blue{Hence, applying \eqref{eq:discrete_dual_bound} yields
    \begin{equation}\label{eq:pointwise_split_error_2}
    \|\eta_{kh}(\tilde t)\|_{L^2(\Omega)}^2
    \le C T^{\frac{1}{4}} \|\xi\|_{L^\infty(I;L^2(\Omega))}
    \|\eta_{kh}(\tilde t) \theta \|_{L^1(I;L^2(\Omega))}.
    \end{equation}
  }
  By definition of $\theta$ we can estimate $\|\eta_{kh}(\tilde t) \theta \|_{L^1(I;L^2(\Omega))}$
  by $C \|\eta_{kh}(\tilde t)\|_{L^2(\Omega)}$.
  This allows us to divide \eqref{eq:pointwise_split_error_2}
  by $\|\eta_{kh}(\tilde t)\|_{L^2(\Omega)}$ which shows the bound
  \begin{align*}
    \|\eta_{kh}(\tilde t)\|_{L^2(\Omega)} 
    \le C T^{\frac{1}{4}}\|\xi\|_{L^\infty(I;L^2(\Omega))}.
  \end{align*}
  As $\tilde t \in I$ was arbitrary,  this shows
  \begin{align*}
    \|\eta_{kh}\|_{L^\infty(I;L^2(\Omega))} 
    & \le C T^{\frac{1}{4}} \|\xi\|_{L^\infty(I;L^2(\Omega))}.
  \end{align*}
  Making use of the previously derived bound for $\|\xi\|_{L^\infty(I;L^2(\Omega))}$ and triangle
  inequality for $e = \xi + \eta_{kh}$ concludes the proof.
\end{proof}
The above theorem is the main result of this work, and the development of the discrete Gronwall 
lemma and analysis of the dual problem were the key ingredients, in order to prove it.
With these techniques established, it is now straightforward, to also prove an 
error estimate in the $L^2(I;L^2(\Omega))$ norm.
If we follow the steps of the proof of \Cref{thm:nav_stokes_bestapprox} up to \eqref{eq:error_split_remainder},
we then have to estimate the occuring trilinear form in terms of $\|\xi\|_{L^2(I;L^2(\Omega))}$, 
i.e., the $L^2(I;L^2(\Omega))$ norm of the Stokes error.
\blue{This implies that we need to estimate the occuring trilinear terms by stronger norms of
the dual state $\zkh$, which are bounded by the results presented in \Cref{lemm:discrete_dual_h2}}
With these considerations, we can show the following theorem.
\begin{theorem}\label{thm:nav_stokes_l2l2}
  Let $f \in L^2(I;L^2(\Omega)^2)$ and $u_0 \in V$. Further let $(u,p)$ and $(\ukh,\pkh)$ be the continuous
  and fully discrete solutions to the Navier-Stokes equations \eqref{eq:nav_stokes_weak_with_pressure}
  and \eqref{eq:nav_stokes_discrete_with_pressure},
  \blue{for sufficiently small discretization parameters $(k,h)$.}
  Then for any $\chi_{kh} \in \Vkh$, there holds
   \begin{align*}
     \|u-\ukh\|_{L^2(I\times \Omega)} & \le C \left(\| u - \chi_{kh}\|_{L^2(I\times\Omega)} 
     + \|u - \pi_\tau u\|_{L^2(I\times \Omega)} + \|u - R_h^S(u,p)\|_{L^2(I\times \Omega)}\right).
   \end{align*}
\end{theorem}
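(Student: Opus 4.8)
The plan is to follow the proof of \Cref{thm:nav_stokes_bestapprox} step by step, with the
$L^\infty(I;L^2(\Omega))$ norm replaced by the $L^2(I\times\Omega)$ norm throughout. As there, I would
introduce the instationary Stokes projection $(\ukht,\pkht)\in\Xkh$ of $(u,p)$ defined by
\eqref{eq:discrete_aux_stokes}, and split $e:=u-\ukh=\xi+\eta_{kh}$ and $r:=p-\pkh=\omega+\kappa_{kh}$ with
$\xi=u-\ukht$, $\eta_{kh}=\ukht-\ukh$, $\omega=p-\pkht$, $\kappa_{kh}=\pkht-\pkh$. Since, by
\Cref{thm:nonlinearity_higher_regularity}, $\tilde f:=f-(u\cdot\nabla)u\in L^2(I;L^2(\Omega)^2)$, and since
$(u,p)$ solves the continuous Stokes problem \eqref{eq:stokes_weak_pressure} with right hand side
$\tilde f$ and initial datum $u_0$, the function $\xi$ is exactly the error of the fully discrete Stokes
approximation of $u$, so that \Cref{thm:stokesl2l2_l2h1} yields, for every $\chi_{kh}\in\Vkh$,
\begin{equation*}
  \|\xi\|_{L^2(I\times\Omega)}\le C\left(\|u-\chi_{kh}\|_{L^2(I\times\Omega)}
  +\|u-\pi_\tau u\|_{L^2(I\times\Omega)}+\|u-R_h^S(u,p)\|_{L^2(I\times\Omega)}\right).
\end{equation*}
By the triangle inequality it remains to bound $\|\eta_{kh}\|_{L^2(I\times\Omega)}$.

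For this I would use the discrete dual problem \eqref{eq:discrete_dual_pressurefree} with right hand side
$\dualrhs:=\eta_{kh}$, let $\zkh\in\Vkh$ be its unique solution (which exists by \Cref{thm:discrete_dual_oseen}
for $(k,h)$ small) and pick an associated pressure $\rhokh\in\Mkh$ from \Cref{rem:dual_pressure_existence}.
Testing the dual equation with $(\eta_{kh},\kappa_{kh})\in\Xkh$ and proceeding exactly as in the proof of
\Cref{thm:nav_stokes_bestapprox} --- using the Galerkin orthogonality $B((\xi,\omega),(\phikh,\psikh))=0$
for all $(\phikh,\psikh)\in\Xkh$, which holds by \eqref{eq:discrete_aux_stokes} since $(u,p)$ solves the
Stokes problem above, together with \Cref{lemm:Galerkin_orth} --- the contributions involving
$e=u-\ukh$ cancel and one arrives at
$\|\eta_{kh}\|_{L^2(I\times\Omega)}^2=-\chatIO{\uukh,\xi,\zkh}-\chatIO{\xi,\uukh,\zkh}$.
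Reverting $\hat c$ to $c$ and applying \Cref{lemm:c_swap_no_div} to move $\xi$ out of the gradient slot,
as in \eqref{eq:error_split_remainder}, gives
\begin{equation*}
  \|\eta_{kh}\|_{L^2(I\times\Omega)}^2
  =\tfrac{1}{2}\IOprod{\nabla\cdot\uukh,\zkh\cdot\xi}+\cIO{\uukh,\zkh,\xi}-\chatIO{\xi,\uukh,\zkh},
\end{equation*}
so that $\xi$ now enters only through its values and can be estimated in $L^2(I\times\Omega)$.

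It then remains to bound the three trilinear terms by $C\|\xi\|_{L^2(I\times\Omega)}
\|\eta_{kh}\|_{L^2(I\times\Omega)}$ with $C$ depending only on the data; dividing by
$\|\eta_{kh}\|_{L^2(I\times\Omega)}$ (trivial if it vanishes) then gives
$\|\eta_{kh}\|_{L^2(I\times\Omega)}\le C\|\xi\|_{L^2(I\times\Omega)}$, and combining with the bound on
$\xi$ concludes the proof. To this end I would apply H\"older's inequality in space and time, putting every
derivative on $\uukh$ or $\zkh$. For $\uukh$ one uses the bounds for $\|\uukh\|_{L^\infty(I;H^1(\Omega))}$,
$\|Au\|_{L^2(I;L^2(\Omega))}$ and $\|A_h\ukh\|_{L^2(I;L^2(\Omega))}$ from
\Cref{thm:h2_regularity,thm:nav_stokes_discrete_h2} (valid since $f\in L^2(I;L^2(\Omega)^2)$, $u_0\in V$ and
$(k,h)$ is small), together with \eqref{eq:w14_interpolation}, \eqref{eq:discrete_w14_interpolation},
\eqref{eq:gagliardo_nirenberg} and \eqref{eq:discrete_stokes_gagliardo_nirenberg}, to obtain $\uukh\in
L^4(I;L^\infty(\Omega))$ and $\nabla\uukh\in L^4(I;L^4(\Omega))$ with data-dependent bounds, exactly as in
the proof of \Cref{thm:nav_stokes_bestapprox}. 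For $\zkh$ one uses the stronger dual stability of
\Cref{lemm:discrete_dual_h2}, giving
$\|\zkh\|_{L^\infty(I;H^1(\Omega))}+\|A_h\zkh\|_{L^2(I;L^2(\Omega))}\le C_{u,\ukh}\|\eta_{kh}\|_{L^2(I\times\Omega)}$,
hence, via \eqref{eq:l4_interpol_general}, \eqref{eq:discrete_w14_interpolation} and the imbedding
$H^1(\Omega)\hookrightarrow L^4(\Omega)$ in two dimensions,
$\|\zkh\|_{L^4(I;L^4(\Omega))}+\|\nabla\zkh\|_{L^4(I;L^2(\Omega))}\le C_{u,\ukh}\|\eta_{kh}\|_{L^2(I\times\Omega)}$.
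The main obstacle --- which already dictates the structure above --- is that $\xi$ is now controlled only
in the weakest norm $L^2(I\times\Omega)$, so no derivative may fall on it; all derivatives must land on
$\uukh$ or $\zkh$, and the resulting need for $\nabla\zkh$ in $L^4(I;L^2(\Omega))$ (equivalently, for
$A_h\zkh\in L^2(I;L^2(\Omega))$) is precisely why the weak dual estimate of \Cref{thm:discrete_dual_oseen}
is insufficient and \Cref{lemm:discrete_dual_h2} must be invoked.
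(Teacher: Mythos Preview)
Your proposal is correct and follows essentially the same route as the paper: the same error splitting via the instationary Stokes projection, the same dual problem with right hand side $\eta_{kh}$, the same reduction to the three trilinear terms via \Cref{lemm:Galerkin_orth} and \Cref{lemm:c_swap_no_div}, and the same key use of \Cref{lemm:discrete_dual_h2} to control the stronger norms of $\zkh$ that become necessary once $\xi$ is only in $L^2(I\times\Omega)$. The only cosmetic difference is in the particular H\"older splitting of the trilinear terms (the paper pairs $\|\nabla\zkh\|_{L^\infty(I;L^2(\Omega))}$ with $\|\uukh\|_{L^4(I;L^\infty(\Omega))}$ and $\|\zkh\|_{L^4(I;L^\infty(\Omega))}$ with $\|\nabla\uukh\|_{L^\infty(I;L^2(\Omega))}$, whereas you use $\|\nabla\zkh\|_{L^4(I;L^2(\Omega))}$ and $\|\zkh\|_{L^4(I;L^4(\Omega))}$), but both choices are covered by the bounds from \Cref{lemm:discrete_dual_h2} and lead to the same conclusion.
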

\begin{proof}
  To deduce the error estimate for the Navier-Stokes equations from the corresponding Stokes result of 
  \Cref{thm:stokesl2l2_l2h1},  we follow a duality argument similar to
  the proof of \Cref{thm:nav_stokes_bestapprox}.
  We split the error in the same fashion $u - \ukh = u - \ukht + \ukht - \ukh$ where $\xi:=u - \ukht$ is 
  estimated by the Stokes result, and for $\eta_{kh} = \ukht - \ukh$ we consider $\zkh$ a solution to the dual 
  problem
\begin{equation*}
   B((\phi_{kh},\psi_{kh}),(z_{kh},\rho_{kh})) 
   + \chatIO{\uukh,\phi_{kh},\zkh} + \chatIO{\phi_{kh},\uukh,\zkh} = 
   \IOprod{\eta_{kh},\phi_{kh}}.
\end{equation*}
As before, we test the dual equation with $\eta_{kh}$ and obtain after elimination of terms by 
applying Galerkin orthogonalities and \Cref{lemm:c_swap_no_div}:
\begin{equation*}
   \|\eta_{kh}\|_{\LtwoLtwo}^2
    = \frac{1}{2} \IOprod{\nabla \cdot \uukh, \zkh \cdot \xi} 
    + \cIO{\uukh,\zkh,\xi} - \chatIO{\xi,\uukh,\zkh}.
\end{equation*}
Applying \blue{Hölder's} inequality yields
\begin{align*}
   \|\eta_{kh}\|_{\LtwoLtwo}^2
   \le & C \|\xi\|_{\LtwoLtwo} \|1\|_{L^4(I;L^\infty(\Omega))} \\
       & \times 
   \left(\|\uukh\|_{L^4(I;L^\infty(\Omega))}\|\nabla \zkh\|_{\LinfLtwo} 
      + \|\zkh\|_{L^4(I;L^\infty(\Omega))}\|\nabla \uukh\|_{\LinfLtwo}\right).
\end{align*}
By \Cref{prop:navier_stokes_solvability}, \Cref{thm:h2_regularity},
\Cref{thm:stability_discrete_navier_stokes}
and \Cref{thm:nav_stokes_discrete_h2} all terms containing $u$, $\ukh$ are bounded.
Together with 
the discrete Gagliardo-Nirenberg inequality \eqref{eq:discrete_stokes_gagliardo_nirenberg} and 
Young's inequality, we obtain
\begin{align*}
   \|\eta_{kh}\|_{\LtwoLtwo}^2
   \le  C(u,\ukh) \|\xi\|_{\LtwoLtwo} T^{\frac{1}{4}} 
 \left( \|A_h \zkh\|_{L^2(I;L^2(\Omega))} + \|\nabla \zkh\|_{L^\infty(I;L^2(\Omega))}   +\|\zkh\|_{L^2(I;L^2(\Omega))}\right).
\end{align*}
Using \Cref{lemm:discrete_dual_h2}, we can bound the norms of $\zkh$ and obtain
\begin{align*}
   \|\eta_{kh}\|_{\LtwoLtwo}^2
   \le  C(u,\ukh) T^{\frac{1}{4}}\|\xi\|_{\LtwoLtwo} \|\eta_{kh}\|_{\LtwoLtwo}.
\end{align*}
Canceling terms concludes the proof.
\end{proof}
\subsection{Explicit orders of convergence}
Using the same arguments from before, instead of the best approximation type estimate, we can also
directly use the error estimate for the Stokes projection, shown in 
\cite[Theorem 7.4]{behringer_fully_2022} and \cite[Corollaries 6.2\& 6.4]{vexler_l2i_2023}
to obtain the following corollaries, yielding explicit orders of convergence.
\begin{corollary}\label{corr:err_est_nav_stokes_spec_orders}
  Let \Cref{ass:interpolation_operators} be fulfilled.
  \blue{Further let $f \in L^\infty(I;L^2(\Omega)^2)$, $u_0 \in V \cap H^2(\Omega)^2$ 
    and let $u$, $\ukh$ be the continuous and fully discrete solutions to the Navier-Stokes 
    equations \eqref{eq:nav_stokes_weak} and \eqref{eq:nav_stokes_discrete} respectively.
  }
  Then there holds
  \begin{equation*}
    \|u - \ukh\|_{L^\infty(I;L^2(\Omega)))} \le C \left(\ln \frac{T}{k}\right)^2 
    (k + h^2)
    \left( \|f\|_{L^\infty(I;L^2(\Omega))} + \|u_0\|_{V \cap H^2(\Omega)^2} 
    + \|(u \cdot \nabla)u\|_{L^\infty(I;L^2(\Omega))} \right),
  \end{equation*}
  where the constants $C$ depend continuously on $\|f\|_{L^2(I;L^2(\Omega))}$ and $\|u_0\|_V$.
  The last term $\|(u \cdot \nabla)u\|_{L^\infty(I;L^2(\Omega))}$ can be bounded 
  in terms of $\|f\|_{L^\infty(I;L^2(\Omega))}$ and $\|u_0\|_{V \cap H^2(\Omega)^2}$ by
  \Cref{corr:nonlinearity_linftyl2}.
\end{corollary}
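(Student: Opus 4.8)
The plan is to reduce the estimate to the explicit-order $L^\infty(I;L^2(\Omega))$ error bound for the \emph{instationary Stokes} problem, applied to the auxiliary Stokes problem whose right-hand side is $\tilde f := f - (u\cdot\nabla)u$. First I would check that the hypotheses of the results invoked in the proof of \Cref{thm:nav_stokes_bestapprox} are met: since $I$ is bounded, $f\in L^\infty(I;L^2(\Omega)^2)$ implies $f\in L^{2+\varepsilon}(I;L^2(\Omega)^2)$ for every $\varepsilon>0$, and since $u_0\in V\cap H^2(\Omega)^2 = V\cap D(\Delta)$, \Cref{rem:initial_data_spaces} gives $u_0\in V_{1-1/(2+\varepsilon)}$. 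Hence \Cref{corr:nonlinearity_linftyl2} applies and yields $(u\cdot\nabla)u\in L^\infty(I;L^2(\Omega)^2)$ together with the bound on $\|(u\cdot\nabla)u\|_{L^\infty(I;L^2(\Omega))}$ appearing in the statement; in particular $\tilde f\in L^\infty(I;L^2(\Omega)^2)$. The hypotheses $f\in L^2(I;L^2(\Omega)^2)$, $u_0\in V$ of \Cref{thm:nav_stokes_bestapprox} (and of \Cref{thm:h2_regularity,thm:nav_stokes_discrete_h2}) are trivially contained in these stronger assumptions.

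Next I would re-enter the proof of \Cref{thm:nav_stokes_bestapprox}: there the velocity error splits as $u-\ukh = \xi + \eta_{kh}$, where $\xi = u - \ukht$ is the error of the fully discrete instationary Stokes projection \eqref{eq:discrete_aux_stokes} --- i.e.\ $\ukht$ is the fully discrete Stokes solution with data $(\tilde f, u_0)$ --- and the analysis carried out there shows $\|\eta_{kh}\|_{\LinfLtwo}\le C T^{1/4}\|\xi\|_{\LinfLtwo}$, so that $\|u-\ukh\|_{\LinfLtwo}\le C\|\xi\|_{\LinfLtwo}$ with $C$ depending on the data only through the constants of \Cref{thm:h2_regularity,thm:nav_stokes_discrete_h2}. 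The point is that $\xi$ is precisely the error between the \emph{continuous} and the \emph{fully discrete} solutions of one and the same instationary Stokes problem, with right-hand side $\tilde f$ and initial datum $u_0$, so no nonlinearity remains to be dealt with.

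The third step is to apply to this Stokes pair the explicit-order error estimate of \cite[Theorem 7.4]{behringer_fully_2022} and \cite[Corollaries 6.2 \& 6.4]{vexler_l2i_2023}, together with \Cref{ass:interpolation_operators} (valid for Taylor--Hood and MINI elements). To invoke it with the full order $k+h^2$ one needs the corresponding regularity of $u$, namely the maximal parabolic regularity $\partial_t u, A u\in L^{2+\varepsilon}(I;L^2(\Omega)^2)$ and $H^2$-regularity in space; this follows from \Cref{prop:max_par_reg} applied with $s=2+\varepsilon$ to the data $(\tilde f,u_0)$ --- legitimate since $\tilde f\in L^{2+\varepsilon}(I;L^2(\Omega)^2)$ and $u_0\in V_{1-1/(2+\varepsilon)}$ --- together with the $H^2$-regularity already used in \Cref{thm:h2_regularity}. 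This yields
\begin{equation*}
  \|\xi\|_{\LinfLtwo} \le C \Big(\ln\tfrac{T}{k}\Big)^2 (k+h^2)\big(\|\tilde f\|_{L^\infty(I;L^2(\Omega))} + \|u_0\|_{V\cap H^2(\Omega)^2}\big),
\end{equation*}
with one factor $\ln(T/k)$ inherited from the best-approximation reduction of \Cref{prop:stokes_linfty} and the second from estimating the time-projection and Stokes--Ritz-projection errors in the $L^\infty$-in-time norm. Bounding $\|\tilde f\|_{L^\infty(I;L^2(\Omega))}\le \|f\|_{L^\infty(I;L^2(\Omega))} + \|(u\cdot\nabla)u\|_{L^\infty(I;L^2(\Omega))}$ and combining with the previous step gives the asserted estimate; the continuous dependence of $C$ on $\|f\|_{L^2(I;L^2(\Omega))}$ and $\|u_0\|_V$ comes from the fact that the nonlinear constants entering through \Cref{thm:nav_stokes_bestapprox,thm:h2_regularity,thm:nav_stokes_discrete_h2} are exponential in polynomial expressions of $\|u_0\|_H$ and $\|f\|_{L^1(I;L^2(\Omega))}$, which are controlled by $\|f\|_{L^2(I;L^2(\Omega))}$ and $\|u_0\|_V$, and from the bound $\|\tilde f\|_{L^2(I;L^2(\Omega))}\le C(\|f\|_{L^2(I;L^2(\Omega))}+\|u_0\|_V)$ implied by \Cref{thm:h2_regularity}.

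The main obstacle I anticipate is purely bookkeeping: verifying that the continuous solution $u$ carries exactly the regularity demanded by the full-order Stokes estimate (the maximal-parabolic / $H^2$ bootstrap with $s=2+\varepsilon$), and carefully tracking through \Cref{thm:nav_stokes_bestapprox,thm:h2_regularity,thm:nav_stokes_discrete_h2} which norms of $u$, $\ukh$, $f$ and $u_0$ actually enter the constants, so as to justify the claimed continuous dependence on $\|f\|_{L^2(I;L^2(\Omega))}$ and $\|u_0\|_V$ alone rather than on the stronger quantities $\|f\|_{L^\infty(I;L^2(\Omega))}$ and $\|u_0\|_{V\cap H^2(\Omega)^2}$.
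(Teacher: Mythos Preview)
Your proposal is correct and follows essentially the same approach as the paper: verify via \Cref{rem:initial_data_spaces} and \Cref{corr:nonlinearity_linftyl2} that $(u\cdot\nabla)u\in L^\infty(I;L^2(\Omega)^2)$, then combine \Cref{thm:nav_stokes_bestapprox} with the explicit-order Stokes estimate \cite[Theorem~7.4]{behringer_fully_2022}. The only cosmetic difference is that the paper applies \Cref{thm:nav_stokes_bestapprox} as a black box---reducing directly to the best-approximation and Stokes--Ritz terms and then estimating those via \cite[Theorem~7.4]{behringer_fully_2022}---whereas you re-enter its proof to isolate the Stokes error $\xi$ and apply the explicit-order Stokes result to that; the two routes are logically equivalent and use the same ingredients.
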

\begin{proof}
  From \Cref{corr:nonlinearity_linftyl2} and \Cref{rem:initial_data_spaces},
  we obtain $(u\cdot \nabla)u \in L^\infty(I;L^2(\Omega)^2)$.
  Hence this result is a direct consequence of \Cref{thm:nav_stokes_bestapprox} and 
  \cite[Theorem 7.4]{behringer_fully_2022}.
\end{proof}
\begin{corollary}\label{corr:l2h1_orders}
  Let \Cref{ass:interpolation_operators} hold true. Further 
  let $f \in L^2(I;L^2(\Omega)^2)$, $u_0 \in V$ and let $u$, $\ukh$ be the continuous and fully discrete 
solutions to the Navier-Stokes equations \eqref{eq:nav_stokes_weak} and \eqref{eq:nav_stokes_discrete}
respectively. Then there hold the estimates
\begin{align*}
  \|\nabla(u-\ukh)\|_{L^2(I;L^2(\Omega))} & \le C \left( k^{\frac{1}{2}} + h\right), \text{ and}\\
   \|(u-\ukh)\|_{L^2(I;L^2(\Omega))} & \le C \left( k + h^2\right),
\end{align*}
where the constants $C$ depend continuously on $\|f\|_{L^2(I;L^2(\Omega))}$ and $\|u_0\|_V$.
\end{corollary}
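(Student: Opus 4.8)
The plan is to reuse the error splittings and the duality/Gronwall arguments already carried out in the proofs of \Cref{thm:nav_stokes_l2h1,thm:nav_stokes_l2l2}, and at the single step where those proofs invoke the best-approximation Stokes estimate \Cref{thm:stokesl2l2_l2h1}, to instead plug in the explicit-order fully discrete Stokes error estimates of \cite[Theorem 7.4]{behringer_fully_2022} and \cite[Corollaries 6.2 \& 6.4]{vexler_l2i_2023}. Concretely, recall that in both proofs the velocity error is written as $u-\ukh = \xi + \eta_{kh}$, where $\xi := u-\ukht$ with $\ukht$ the solution of the discrete auxiliary Stokes problem \eqref{eq:discrete_aux_stokes}, i.e.\ the fully discrete Stokes approximation with data $\tilde f := f-(u\cdot\nabla)u$ and initial value $u_0$; and that the remainder $\eta_{kh}$ was there controlled entirely by $\xi$: the Gronwall argument of \Cref{thm:nav_stokes_l2h1} gives $\|\nabla(u-\ukh)\|_{\LtwoLtwo} \le C\,\|\xi\|_{L^2(I;H^1(\Omega))}$, while the duality argument of \Cref{thm:nav_stokes_l2l2} gives $\|u-\ukh\|_{\LtwoLtwo} \le C\,\|\xi\|_{\LtwoLtwo}$, with constants depending continuously on the data through \Cref{prop:navier_stokes_solvability,thm:h2_regularity,thm:nav_stokes_discrete_h2,thm:stability_discrete_navier_stokes}.

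First I would verify that the explicit-order Stokes estimates actually apply to $\xi$. By \Cref{thm:h2_regularity} we have $u\in L^2(I;H^2(\Omega)^2)\cap H^1(I;L^2(\Omega)^2)$ and the associated pressure satisfies $p\in L^2(I;H^1(\Omega))$, all with norms bounded continuously in terms of $\|f\|_{\LtwoLtwo}$ and $\|u_0\|_V$; by \Cref{thm:nonlinearity_higher_regularity} with $s=2$ one has $(u\cdot\nabla)u\in L^2(I;L^2(\Omega)^2)$, hence $\tilde f\in L^2(I;L^2(\Omega)^2)$ with a bound of the same type; and $u_0\in V$. These are exactly the hypotheses of \cite[Theorem 7.4]{behringer_fully_2022} and \cite[Corollaries 6.2 \& 6.4]{vexler_l2i_2023}. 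Their application --- using \Cref{ass:interpolation_operators} to realise the spatial orders $h$ and $h^2$ from the $H^2$-regularity of $u$ and the $H^1$-regularity of $p$, together with the standard dG time-projection estimates for $\pi_\tau$ coming from the $H^1(I;L^2(\Omega)^2)$-regularity of $u$ --- yields
\[
  \|\nabla\xi\|_{\LtwoLtwo}\le C\bigl(\sqrt{k}+h\bigr),
  \qquad
  \|\xi\|_{\LtwoLtwo}\le C\bigl(k+h^2\bigr),
\]
with $C$ depending continuously only on $\|f\|_{\LtwoLtwo}$ and $\|u_0\|_V$.

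Combining the two displays with the bounds on $\eta_{kh}$ recalled above and the triangle inequality then gives $\|\nabla(u-\ukh)\|_{\LtwoLtwo}\le C\,\|\xi\|_{L^2(I;H^1(\Omega))}\le C(\sqrt{k}+h)$ and $\|u-\ukh\|_{\LtwoLtwo}\le C\,\|\xi\|_{\LtwoLtwo}\le C(k+h^2)$, which are the claimed estimates (valid, just as in \Cref{thm:nav_stokes_l2h1,thm:nav_stokes_l2l2}, once $(k,h)$ is sufficiently small).

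I expect essentially no genuine difficulty here: all the structural work --- the error splitting, the discrete Gronwall \Cref{thm:discrete_gronwall_quadlinear}, and the stability of the discrete dual problem \Cref{thm:discrete_dual_oseen,lemm:discrete_dual_h2} --- is already in place. The only point requiring care is the bookkeeping of constants, namely checking that replacing \Cref{thm:stokesl2l2_l2h1} by the explicit-order Stokes estimates introduces no new dependence on $k$, $h$, or $T$; this is guaranteed because all the norms of $u$, $\ukh$ and of the nonlinearity entering the argument are, by the cited regularity and stability results, controlled by $\|f\|_{\LtwoLtwo}$ and $\|u_0\|_V$ alone.
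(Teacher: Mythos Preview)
Your proposal is correct and essentially follows the paper's approach: the paper's proof simply states that the result is a direct consequence of \Cref{thm:nav_stokes_l2h1}, \Cref{thm:nav_stokes_l2l2}, and \cite[Corollaries~6.2 \& 6.4]{vexler_l2i_2023}. You unpack this a bit more than necessary by revisiting the error splitting $u-\ukh=\xi+\eta_{kh}$ from inside those proofs, whereas the paper applies the already-proven best-approximation theorems directly and then bounds their right-hand sides by the cited explicit-order Stokes results; also, the reference to \cite[Theorem~7.4]{behringer_fully_2022} is superfluous here, since that result concerns the $L^\infty(I;L^2(\Omega))$ norm and is only needed for \Cref{corr:err_est_nav_stokes_spec_orders}.
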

\begin{proof}
  This result is a direct consequence of \Cref{thm:nav_stokes_l2h1}, \Cref{thm:nav_stokes_l2l2} 
  and \cite[Corollaries 6.2 \& 6.4]{vexler_l2i_2023}.
\end{proof}

\bibliographystyle{siam}
\bibliography{Quellen.bib}


\end{document}